\DeclareMathOperator{\Res}{Res}
\DeclareMathOperator{\p}{p.v.}
\DeclareSymbolFont{largesymbol}{OMX}{yhex}{m}{n}
\DeclareMathAccent{\Widehat}{\mathord}{largesymbol}{"62}
\newcommand*\di{\mathop{}\!\mathrm{d}}
\def\e{\epsilon}
\newcommand{\occ}{\overline{\mathbb{C}_+}}
\numberwithin{equation}{section}              
\newtheorem{theorem}{Theorem}[section]
\newtheorem{lemma}{Lemma}[section]
\newtheorem{proposition}{Proposition}[section]
\newtheorem*{proposition*}{Proposition}
\newtheorem{corollary}{Corollary}[section]
\newtheorem*{corollary*}{Corollary}
\newtheorem{definition}{Definition}[section]
\newtheorem*{definitions*}{Definitions}
\newtheorem*{conjecture*}{\bf Conjecture}
\newtheorem*{example*}{\bf Example}
\theoremstyle{remark}
\newtheorem{remark}{\bf Remark}[section]
\begin{document}
\date{}                                     
\title{Large time behavior, bi-Hamiltonian structure and kinetic formulation for complex Burgers equation}

\author[1]{Yu Gao\thanks{gaoyu90@hku.hk}}
\author[2]{Yuan Gao\thanks{yuangao@math.duke.edu}}
\author[3]{Jian-Guo Liu\thanks{jliu@phy.duke.edu}}
\affil[1]{Department of Mathematics, The University of Hong Kong, Pokfulam, Hong Kong.}
\affil[2]{Department of Mathematics, Duke University, Durham, NC 27708, USA.}
\affil[3]{Department of Mathematics and Department of Physics, Duke University, Durham, NC 27708, USA.}

\maketitle

\begin{abstract}
We prove the  existence and uniqueness of positive analytical solutions with positive initial data to the mean field equation (the Dyson equation) of the Dyson Brownian motion  through the complex Burgers equation with a force term on the upper half complex plane. These solutions converge to a steady state given by Wigner's semicircle law. A unique global weak solution with nonnegative initial data to the Dyson equation is obtained and some explicit solutions are given by Wigner's semicircle laws. We also construct a bi-Hamiltonian structure for the system of the real and imaginary components of the complex Burgers equation (coupled Burgers system). We establish a kinetic formulation for the coupled Burgers system and prove the existence and uniqueness of entropy solutions. 
The coupled Burgers system in Lagrangian variable naturally leads to two interacting particle systems: Fermi-Pasta-Ulam-Tsingou model with nearest-neighbor interactions, and Calogero-Moser model.  These two particle systems yield the same Lagrangian dynamics in the continuum limit.

\end{abstract}

\section{Introduction}
Complex Burgers equation arises, although in different ways, from  many different fields such as fluid mechanics,  random surface minimizing problem and Burgers turbulence in quantum chromodynamics, which always unveils some mechanisms of singularity formations. We only list several examples here.
\cite{zubarev2018exact} use complex Burgers equation to construct a family of  singular solution to zero-gravity water wave system.
\cite{kenyon2007limit} use the complex Burgers equation to study the limit shape and singularity formations of  random surface models.
For other applications of complex Burgers equation such as singularity tracking in the evolution of the complex system and the large-N limit of induced quantum chromodynamics   we refer to \cite{gross1995some} and the references therein.

In this paper, we study the complex Burgers equation with a force term $\gamma^2z$ on the upper half complex plane $\mathbb{C}_+:=\{z:\Im(z)>0\}$:
\begin{align}\label{eq:complexBurgers}
\partial_tg+g\partial_zg=\gamma^2z,\qquad z\in\mathbb{C_+},~~t>0.
\end{align}
Here, $\gamma\geq0$ is a constant. We use $\Re(z)$ and $\Im(z)$ to stand for the real and imaginary parts of a complex number $z$ respectively.

Take the trace of a solution $g(z,t)$ to \eqref{eq:complexBurgers} on the real line and there are two real functions $u(x,t)$ and $\rho(x,t)$ such that
\begin{align}\label{eq:furho}
g(x,t)+\gamma x=u(x,t)+i\pi \rho(x,t),~~x\in\mathbb{R},~~t>0,
\end{align}
where $\pi$ is the circumference ratio.
If  $g(z,t)$ is a $\mathbb{C}_+$-holomorphic function, then we have the following relation between $u$ and $\rho$:
\begin{align}\label{eq:hilbert}
u(x,t)=(\pi H\rho) (x,t),
\end{align}
where $H\rho$ stands for the Hilbert transform of $\rho$ given by 
\begin{align*}
(H\rho)(x,t)=\frac{1}{\pi}\p\int_{\mathbb{R}}\frac{\rho(y,t)}{x-y}\di y,~~x\in\mathbb{R}.
\end{align*}
Take \eqref{eq:furho} into \eqref{eq:complexBurgers} and we obtain the following nonlocal partial differential equation for $\rho$: 
\begin{equation}\label{eq:meanfield}
\partial_t\rho+ \partial_x[\rho (u-\gamma x)]=0,~~u(x,t)=(\pi H\rho)(x,t),~~x\in\mathbb{R},~~t>0.
\end{equation}
The equation for $u$ can be obtained from \eqref{eq:meanfield} by the Hilbert transform (see  \eqref{eq:ubeta1}).
We refer to  \eqref{eq:meanfield} as the Dyson equation which is a mean field equation for the Dyson Brownian motion as described below.

The $N\times N$ complex Hermitian matrices form a $N^2$ dimensional linear vector space over field $\mathbb{R}$. Consider a Hermitian matrices valued Ornstein-Uhlenbeck (OU) process $A(t)=(A_{jk}(t))_{N\times N}$ given by
\begin{gather}\label{eq:OUP}
\left\{\begin{split}
\di A_{jj}(t)&=\frac{1}{\sqrt{N}}\di B_{jj}(t)-\gamma A_{jj}(t) \di t,~~j=1,\cdots, N,\\
\di \Re A_{jk}(t)&=\frac{1}{\sqrt{2N}}\di B^R_{jk}(t)-\gamma \Re A_{jk}(t) \di t,~~j< k,\\
\di \Im A_{jk}(t)&=\frac{1}{\sqrt{2N}}\di B^I_{jk}(t)-\gamma \Im A_{jk}(t) \di t,~~j<k,
\end{split}
\right.
\end{gather}
with $A(0)=0.$
Here $B_{jj}(t)~~(1\leq j\leq N),~~B^R_{jk}(t),~B^I_{jk}(t)~~(1\leq j<k\leq N),$ are $N^2$ independent standard Brownian motions in $\mathbb{R}$.  The eigenvalues $\lambda_1(t)\leq \cdots\leq \lambda_N(t)$ of $A(t)$ form some real stochastic processes.
By applying Ito's formula to $\lambda_j(t)(=\lambda_j(A(t)))$, one can show that $\lambda_j(t)$ evolve by  (\cite{dyson1962brownian,erdos2017dynamical,Tao2012Topics})
\begin{align}\label{eq:DysonBrownian}
\di \lambda_j(t)=\frac{1}{\sqrt{N}}\di B_j(t)+\frac{1}{N}\sum_{k\neq j}\frac{\di t}{\lambda_j(t)-\lambda_k(t)}-\gamma\lambda_j(t)\di t,\quad 1\leq j\leq N.
\end{align}
This evolution of eigenvalues are referred to as the Dyson Brownian motion. One can refer to \cite{erdos2017dynamical,Tao2012Topics} for more details about random matrices and the Dyson Brownian motion. It is well known that the effects of harmonic trap term $-\gamma \lambda_j(t)\di t$ in the OU process \eqref{eq:DysonBrownian} can be reformulated into the case $\gamma=0$, i.e. \eqref{eq:DysonBrownian} without the trap term, by a space-time rescaling. We describe this space-time rescaling for complex Burgers equation below. Let $g$ be a solution to \eqref{eq:complexBurgers} and set
\begin{align}\label{eq:transform}
\tilde{g}(w,\tau)\sqrt{1+2\gamma\tau}=g(z,t)+\gamma z,\quad z=\frac{w}{\sqrt{1+2\gamma\tau}},\quad t=\frac{1}{2\gamma}\log(1+2\gamma\tau).
\end{align}
Then, $\tilde{g}$ is a solution to the Complex Burgers equation without the force term:
\begin{align}\label{eq:complexB}
\partial_\tau\tilde{g}+\tilde{g}\partial_w\tilde{g}=0.
\end{align}
Note that $\tilde{g}(\cdot,\tau)$ is a $\mathbb{C}_+$-holomorphic ($\occ$-holomorphic) solution to \eqref{eq:complexB}  if and only if $g(\cdot,t)$ is a $\mathbb{C}_+$-holomorphic ($\occ$-holomorphic) solution to \eqref{eq:complexBurgers}.

The mean field limit of the Dyson Brownian motion \eqref{eq:DysonBrownian} yields the Dyson equation \eqref{eq:meanfield} (\cite{rogers1993interacting,cepa1997diffusing,berman2019propagation}), and \eqref{eq:meanfield} is a gradient flow in the probability measure spaces with Wasserstein distance  with respect to a free energy functional given by \cite[Chapter 11]{Ambrosio}
\begin{align}\label{eq:interactionEnergy}
E(\rho(\cdot,t))&=\frac{\gamma}{2}\int_{\mathbb{R}} x^2\rho(x,t) \di x-\frac{1}{2}\int_{\mathbb{R}}\int_{\mathbb{R}}\log|x-y|\rho(x,t)\rho(y,t)\di x\di y\nonumber\\
&=:E_{\textnormal{h}}(\rho(\cdot,t))+E_{\textnormal{i}}(\rho(\cdot,t)).
\end{align}
Here $E_{\textnormal{h}}$ is a harmonic  trap  energy  and  $E_{\textnormal{i}}$ is  an interaction energy.
Then, the  Dyson equation \eqref{eq:meanfield} is recast to
\begin{align}\label{eq:gradientflow}
\partial_t\rho-\partial_x\left[\rho\partial_x\left(\frac{\delta E}{\delta\rho}\right)\right]=0,\quad \frac{\delta E}{\delta\rho}=  \frac{\gamma}{2} x^2-\int_{\mathbb{R}}\log|x-y|\rho(y,t)\di y.
\end{align}

With initial data $\rho_0>0$ and $\rho_0\in L^2(\mathbb{R})\cap C^{0,\delta}(\mathbb{R})$, Castro and C\'ordoba \cite{Castro2008Global} proved global existence and uniqueness of  real analytical solutions for $t>0$ to the case  $\gamma=0$ of \eqref{eq:meanfield} . This instantaneous analytical property is suggested by the gradient flow structure \eqref{eq:gradientflow}. However, if there is $x_0\in\mathbb{R}$ such that  $\rho(x_0)=0$, then the solution $ \rho$ will blow up in $H^s(\mathbb{R}),\, s>\frac{3}{2}$ at finite time \cite{Castro2008Global}.  Thanks to the transformation in \eqref{eq:transform},  these two results hold also for $\gamma>0$; see Theorem \ref{thm:analytic} and Remark \ref{eq:blowup}. Moreover, we prove the global weak solution $\rho\in L^{\infty}(0,T; H^{\frac12}(\mathbb{R})\cap L^1_+(\mathbb{R}))$ to \eqref{eq:meanfield} in Theorem \ref{thm:weaktheorem}. The global regularity or finite time blow in the space $H^s(\mathbb{R}),\,s\in(\frac12, \frac32]$ remain open.

The steady state for the Dyson equation is given by Wigner's semicircle law:
\begin{align}\label{eq:semicirclelaw}
\mu_1(\di x)= \rho_1(x) \di x := \frac{\sqrt{(4-x^2)_+}}{2\pi}\di x,
\end{align}
which has a compact support. Hence the solution $\rho$ is not absolutely continuous with respect to the steady state and the relative entropy method can not be directly applied here. There are two methods to prove the convergence of solution $\rho$ to its steady state. (i) For strictly positive initial data $\rho_0(x)>0$, following the idea of \cite{rogers1993interacting} we prove the pointwise convergence as $t$ goes to infinity using analytical method; see Appendix \ref{App_B}. (ii) Notice  the free energy $E(\rho)$ given by \eqref{eq:interactionEnergy} for the Dyson equation consists a harmonic  trap  energy $E_{\textnormal{h}}$ and an interaction energy $E_{\textnormal{i}}$. Since $E_{\textnormal{i}}$ is  convex along generalized Wasserstein geodesics and $E_{\textnormal{h}}$ is $\gamma$-convex along Wasserstein geodesics, the standard gradient flow theory yields $W_2$-contraction and hence the exponentially convergence to the steady state in Wasserstein distance (see Remark \ref{rmk:convergence} and Carrillo et. al. \cite{carrillo2012mass}). 

Consider the complex Burgers equation \eqref{eq:complexBurgers} with $\gamma=0$. If $g(x,t)$ given by  \eqref{eq:furho}  is no longer a trace of a $\mathbb{C}_+$-holomorphic function, then the relation between $u$ and $\rho$ in \eqref{eq:hilbert} does not hold. We need to treat $u$ and $\rho$ independently. Take \eqref{eq:furho} into \eqref{eq:complexBurgers} and we obtain the following system on the real line:
\begin{gather}\label{eq:systemEuler}
\left\{
\begin{split}
&\rho_t + (\rho u)_x = 0,~~x\in\mathbb{R},~~t>0,\\
&\partial_tu + u \partial_xu-\pi^2\rho\partial_x\rho = 0.
\end{split}
\right.
\end{gather}
Unfortunately,  for the Cauchy problem, the above system is ill-posed  as described below. We introduce the following system of conservation law with general constant $\alpha\in\mathbb{R}$:
\begin{gather}\label{eq:systemEuler1}
\left\{
\begin{split}
&\partial_t\rho + \partial_x(\rho u) = 0,~~x\in\mathbb{R},~~t>0,\\
&\partial_tu + \partial_x\left(\frac{u^2+\alpha \rho^2}{2}\right)= 0.
\end{split}
\right.
\end{gather}
Due to the relation between System \eqref{eq:systemEuler} ( \eqref{eq:systemEuler1}) and the complex Burgers equation \eqref{eq:complexBurgers}, we call System \eqref{eq:systemEuler1} as the coupled Burgers system in this paper. System \eqref{eq:systemEuler1} can be rewritten as the following quasi-linear system
\begin{equation}\label{eq:SystemEuler}
\frac{\partial}{\partial t}\begin{pmatrix}
\rho\\
u
\end{pmatrix}
+
A(\rho,u)\frac{\partial}{\partial x}
\begin{pmatrix}
\rho \\  u
\end{pmatrix}=0,\quad A(\rho,u)
=
\begin{pmatrix}
u & \rho\\
\alpha\rho  & u
\end{pmatrix}.
\end{equation}
The eigenvalues of $A$ are given by $u\pm \sqrt{\alpha}\rho$, where $\sqrt{\alpha}=\sqrt{-1}\sqrt{|\alpha|}=i\sqrt{|\alpha|}$ for $\alpha<0$. When $\alpha>0$, this system is a hyperbolic system of conservation laws. When $\alpha<0$ and $\rho\neq 0$, $A$ has two imaginary eigenvalues and System \eqref{eq:systemEuler1} is elliptic and ill-posedness. For  $\alpha\neq 0$, we set the eigenvalues as
\begin{equation}\label{fpm}
f_+:=u+\sqrt{\alpha}\rho,\quad f_-:=u-\sqrt{\alpha}\rho.
\end{equation}
A linear transformation from the coupled Burgers system \eqref{eq:systemEuler1} shows that the eigenvalues satisfy the following decoupled Burgers equations:
\begin{align}
&\partial_tf_++f_+\partial_xf_+=0,\quad x\in\mathbb{R},~~t>0,\label{eq:decouple1}\\
&\partial_tf_-+f_-\partial_xf_-=0,\quad x\in\mathbb{R},~~t>0.\label{eq:decouple2}
\end{align}
When $\alpha<0$, \eqref{eq:decouple2} is just the conjugate of equation  \eqref{eq:decouple1}. When $\alpha=-\pi^2$, \eqref{eq:decouple1} is exactly the complex Burgers equation \eqref{eq:complexBurgers} ($\gamma=0$) on the real line.

For $\alpha>1$, notice that $f_\pm$ are Riemann invariants of the following system of  isentropic gas dynamics:
\begin{gather}\label{eq:systemEuler3}
\left\{
\begin{split}
& \partial_t\rho + \partial_x(\rho u) = 0,~~x\in\mathbb{R},~~t>0,\\
&\partial_t(\rho u) + \partial_x(\rho u^2) + \partial_xp = 0 ,
\end{split}
\right.
\end{gather}
where the pressure $p$ is given by
\begin{align}\label{eq:pressure}
p(x,t)=\frac{\alpha}{3}\rho^3(x,t).
\end{align}
Formally, system  \eqref{eq:systemEuler3} is a nonlinear transformation of the coupled Burgers system \eqref{eq:systemEuler1} and it expresses in physics the conservation of mass and the conservation of momentum, i.e. $m:=\rho u$, for an  isentropic gas system. In the quasi-linear form, we have
\begin{equation}\label{eq:SystemEulergas}
\frac{\partial}{\partial t}\begin{pmatrix}
\rho\\
m
\end{pmatrix}
+
B(\rho,m)\frac{\partial}{\partial x}
\begin{pmatrix}
\rho \\  m
\end{pmatrix}=0,\quad B(\rho,m)
=
\begin{pmatrix}
0& 1 \\
-\frac{m^2}{\rho^2}+\alpha\rho^2 & \frac{2m}{\rho} 
\end{pmatrix}.
\end{equation}
The functions $f_\pm=u\pm\sqrt{\alpha}\rho$ are also the eigenvalues of $B$. Notice that classical solutions of  the coupled Burgers system \eqref{eq:systemEuler1} are also classical solutions to \eqref{eq:systemEuler3}. However, when shock appears, shock speed for the coupled Burgers system \eqref{eq:systemEuler1} and \eqref{eq:systemEuler3} are different.
For smooth solutions of System \eqref{eq:systemEuler3}, the following conservation of energy  holds:
\begin{align}\label{eq:energyconservated}
\partial_tE + \partial_x[u(E+ p)]=0,
\end{align}
where the total energy density is given by
\begin{align}\label{eq:energyflux}
E(x,t) = \frac{1}{2} \rho u^2 + \frac{p}{2} =\frac{1}{2} \rho u^2 + \frac{\alpha }{6}\rho^3. 
\end{align}

Although there is no bi-Hamiltonian structure for Burgers equation, we use the decoupled Burgers equations \eqref{eq:decouple1} and \eqref{eq:decouple2} to construct a bi-Hamiltonian structure for the coupled Burgers system \eqref{eq:systemEuler1} (see Theorem \ref{thm:biHamiltonian}). Moreover, we obtain infinite many conserved quantities for the coupled Burgers system \eqref{eq:systemEuler1}. Bi-Hamiltonian structures for System \eqref{eq:systemEuler3} and p-system (which is the gas dynamics in Lagrangian coordinates; see  \eqref{eq:gas} below) are also obtained. To discover a bi-Hamiltonian structure or a Lax pair for an integrable system is very important. Indeed, according to the fundamental theorem of Magri  \cite{Magri1978A}, any bi-Hamiltonian system associated with a nondegenerate Hamiltonian pair induces a hierarchy of commuting Hamiltonian flows and, provided enough of these Hamiltonians are functionally independent, is therefore completely integrable. For general discussions about Hamiltonian structures for systems of hyperbolic conservation laws, one can refer to \cite{Olver1988Hamiltonian}.

When $\alpha>0$, we establish a kinetic formulation for the coupled Burgers system \eqref{eq:systemEuler1}. Using the kinetic formulation, we define a class of entropy pairs to the coupled Burgers system \eqref{eq:systemEuler1}. Notice that our definition of entropies corresponds to the counter part (in the sense as explained in Remark \ref{rmk:counterpart}) of entropies used in \cite{Lions1994Kinetic} for System \eqref{eq:systemEuler3}. In \cite{Lions1994Kinetic}, Lions, Perthame and Tadmor proved the existence of global entropy solutions to \eqref{eq:systemEuler3} and the uniqueness is unknown. In contrast, we prove the existence and uniqueness of entropy solutions to the coupled Burgers system \eqref{eq:systemEuler1} (see  Section \ref{sec:entropy}). Moreover, we show that an entropy solution to the coupled Burgers system \eqref{eq:systemEuler1}  corresponds to an entropy solution to the decoupled Burgers equations \eqref{eq:decouple1} and \eqref{eq:decouple2} (see Proposition \ref{pro:equivalence}). For more details on relations of  entropy solutions and weak solutions to kinetic equations, one can refer to \cite{Perthame2000Kinetic}.

We also derive the Lagrangian dynamics (see \eqref{eq:LagrangeEuler}) for the coupled Burgers system \eqref{eq:systemEuler1}, which resembles the gas dynamics in Lagrangian variables, or p-system \cite{Serre1999Systems}:
\begin{gather}\label{eq:gas}
\left\{
\begin{split}
&\partial_t\tau- \partial_\xi V=0,\\
&\partial_t V+\partial_\xi p=0,
\end{split}
\right.
\end{gather}
where $\tau(\xi,t)={1}/{\rho(X(\xi,t),t)}=X_\xi(\xi,t)$ stands for the specific volume and $\xi$ is the Lagrangian labels. $X(\xi,t)$ is the flow map according to velocity field $u(X(\xi,t),t)$ (see \eqref{eq:leastdynamics}). $V$ is the velocity in Lagrangian variable $V(\xi,t):=u(X(\xi,t),t)$ and $p(\tau)={\alpha}/{(3\tau^3)}$ is the pressure given by \eqref{eq:pressure} (see more details in Section \ref{sec:Lagrange}). The Lagrangian dynamics of the coupled Burgers system \eqref{eq:systemEuler1} naturally leads to a spring-mass system (Fermi-Pasta-Ulam-Tsingou model) such that each mass evolves by the elastic force between adjacent mass that are reciprocal proportion to the cubic of distances between the mass and the adjacent masses (see \eqref{eq:springModel}). Instead of the nearest-neighbor interaction, if the mass interacts with all the other masses with the same manner, we obtain the Calogero-Moser model with different coefficients. As it is known, the Calogero-Moser model is an integrable systems with a Lax-pair; see \cite{moser1976three}. An interesting fact is that the continuum limit of the Calogero-Moser model gives the same Lagrangian dynamics of the coupled Burgers system \eqref{eq:systemEuler1}; see \cite{Menon2}.

The rest of this paper is organized as follows.
In Section \ref{sec:Dyson}, we prove the global existence and uniqueness of real analytical solutions to complex Burgers equation \eqref{eq:complexBurgers} and the Dyson equation \eqref{eq:meanfield} ($\gamma\geq 0$) with strictly positive initial datum $\rho_0\in H^s(\mathbb{R})\cap L^1(\mathbb{R})$, $s>1/2$. We also obtian the pointwise convergence to the steady state for analytical solutions. Some explicit solutions are constructed by using Wigner's semicircle law, which converge to the steady state exponentially when $\gamma>0$. The same explicit solution is given in Appendix \ref{app_A} by the Stieltjes transform of Wigner's semicircle law $\mu_1$. Moreover, we prove the global existence of weak solutions in $H^{1/2}(\mathbb{R})\cap L^1(\mathbb{R})$ for nonnegative initial date.
In Section \ref{sec:bihamiltonian}, we construct bi-Hamiltonian structures for the coupled Burgers system \eqref{eq:systemEuler1}, isentropic gas system \eqref{eq:systemEuler3} and p-system \eqref{eq:gas}.
In Section \ref{sec:kinetic}, we establish kinetic formulation for the  coupled Burgers system \eqref{eq:systemEuler1} with $\alpha>0$. The existence and uniqueness of entropy solutions to  \eqref{eq:systemEuler1} are also proved. In Section \ref{sec:Lagrangemass}, we study the Lagrangian dynamics for the coupled Burgers system \eqref{eq:systemEuler1} and explore the connection between the Lagrangian dynamics system and a Fermi-Pasta-Ulam-Tsingou model with nearest-neighbor interactions.  In Appendix \ref{App_B}, we give the proof of Theorem \ref{thm:analytic}.

\section{Complex Burgers equation and the Dyson Brownian motion}\label{sec:Dyson}
Recall the Dyson Brownian motion  \eqref{eq:DysonBrownian}. The eigenvalues $\lambda_j$ given by  \eqref{eq:DysonBrownian} evolve by Brownian motion, combined with a deterministic repulsion force that repels nearby eigenvalues from each other with a strength inversely proportional to the separation.
Notice that System \eqref{eq:DysonBrownian}  can also be rewritten as
\begin{align}\label{eq:normDysonmotion2}
\di \lambda_j(t)=\frac{1}{\sqrt{{N}}}\di B_j(t)-\partial_{\lambda_j}\Phi(\lambda_1(t),\cdots,\lambda_N(t)),\quad 1\leq j\leq N,
\end{align}
with potential function given by
\begin{align}\label{eq:potential}
\Phi(\lambda_1(t),\cdots,\lambda_N(t)):=\frac{\gamma}{2}\sum_{j=1}^N\lambda_j^2(t)-\frac{1}{2N} \sum_{j=1}^N\sum_{k\neq j} \log|\lambda_j(t) - \lambda_k(t)|.
\end{align}
It can be proved that the eigenvalues almost surely not collide with each other (see  \cite{rogers1993interacting,liu2016propagation, LLY2019}) and  the solutions to System \eqref{eq:DysonBrownian} exist globally. Hence, the empirical measure
\begin{align}\label{eq:empirical}
\mu^N(t):=\frac{1}{N}\sum_{j=1}^N\delta_{\lambda_j(t)}
\end{align}
is well defined for $t\in[0,\infty)$. One can prove that $\mu^N(t)$ converges to some probability measure satisfying the Dyson equation  \eqref{eq:meanfield} (\cite{rogers1993interacting,cepa1997diffusing,berman2019propagation}).

Next, we derive the complex Burgers equation \eqref{eq:complexBurgers} from the Dyson equation \eqref{eq:meanfield}.
For $f,g\in L^{p}(\mathbb{R})$ ($p>1$), the Hilbert transform has the following properties (see e.g. \cite{Pandey}):
\[
H(Hf)=-f,\quad \partial_x(Hf)=H\partial_xf,
\]
and
\[
H(fHg+gHf)=HfHg-fg.
\]
Applying the Hilbert transform to the Dyson equation \eqref{eq:meanfield} yields
\[
\partial_t(H\rho) + \pi H\rho H\partial_x\rho - \pi\rho \partial_x\rho-\gamma\partial_xH(\rho x) = 0.
\]
Moreover, for any function $g:\mathbb{R}\to\mathbb{R}$, we have
\begin{align}\label{eq:formula}
H(xg(x))=&\frac{1}{\pi}\p\int_{\mathbb{R}}\frac{yg(y)}{x-y}\di y=\frac{1}{\pi}\p\int_{\mathbb{R}}\frac{(y-x)g(y)}{x-y}\di y+\frac{1}{\pi}\p\int_{\mathbb{R}}\frac{xg(y)}{x-y}\di y\nonumber\\
=&xHg(x)-\frac{1}{\pi}\int_{\mathbb{R}}g(x)\di x,
\end{align}
which implies
\begin{align}\label{fact2}
H(\rho x)=-\frac{\|\rho(t)\|_{L^1}}{\pi}+\frac{1}{\pi}ux.
\end{align}
Combining the above two equations, we have
\begin{align}\label{eq:ubeta1}
\partial_tu+u\partial_xu-\pi^2\rho \partial_x\rho -\gamma\partial_x(ux)= 0.
\end{align}
Set
\[
f = u - i \pi \rho,  \qquad u = \pi H \rho.
\]
Hence, $f$ gives the trace of an analytic function in the upper half plane. Combining \eqref{eq:meanfield} and \eqref{eq:ubeta1} yields
\[
\partial_t f+f\partial_xf-\gamma\partial_x(fx)=0,~~x\in\mathbb{R},~~t>0.
\]
This corresponds to the following complex equation in $\mathbb{C}_+$:
\begin{align}\label{eq:complexBurgers1}
\partial_t f+f\partial_zf-\gamma\partial_z(fz)=\partial_t f+f\partial_zf-\gamma z\partial_zf-\gamma f=0,~~t>0.
\end{align}
By the linear transformation $g(z,t)=f(z,t)-\gamma z$, we have
\[
\partial_tg+g\partial_zg-\gamma^2 z=\partial_tf+(f-\gamma z)(\partial_zf-\gamma)-\gamma^2z=\partial_t f+f\partial_zf-\gamma z\partial_zf-\gamma f=0,
\]
which is the Burgers equation with force term $\gamma^2 z$   \eqref{eq:complexBurgers}.
Moreover,  from the above computation we see that the Dyson equation  \eqref{eq:meanfield} with $\gamma=0$  is equivalent to the coupled Burgers system \eqref{eq:systemEuler1} with $\alpha=-\pi^2$ and $u=\pi H\rho$.

\subsection{Analytical solutions to the Dyson equation \eqref{eq:meanfield}, convergence to steady state and finite time blow up}\label{sec:analytic}

In this subsection, we prove the existence and uniqueness of positive analytical solutions to the Dyson equation \eqref{eq:meanfield} with $\gamma>0$ and initial datum $0<\rho_0\in H^s(\mathbb{R})\cap L^1(\mathbb{R})$ ($s>1/2$) by proving the well-posedness results for complex Burgers equation \eqref{eq:complexBurgers}. We also show the pointwise convergence to the steady state for analytical solutions. 

Let $\rho_0(x)>0$ and $\rho_0\in   H^s(\mathbb{R})\cap L^1(\mathbb{R})$ with $s>1/2$ be the initial datum for the Dyson equation \eqref{eq:meanfield}. The initial datum $\rho_0$  can be extended to a $\mathbb{C}_+$-holomorphic function by  Hilbert transform (also called Stieltjes transformation, Borel transform or Markov function) for positive measures: 
\begin{align}\label{eq:initialf}
f_0(z):=\frac{1}{\pi}\int_{\mathbb{R}}\frac{\rho_0(s)}{z-s}\di s,~~z=x+iy\in\mathbb{C}_+.
\end{align}
Let
\begin{align}\label{eq:initialg}
g_0(z):=f_0(z)-\gamma z,~~z=x+iy\in \mathbb{C}_+.
\end{align}
Then, $g_0$ is a $\mathbb{C_+}$-holomorphic function.
Consider the following Cauchy problem of the Burgers equation with force term $\gamma^2z$ in $\mathbb{C}_+$:
\begin{gather}\label{eq:complexBurgers2}
\left\{
\begin{split}
&[\partial_tg+g\partial_zg](z,t)=\gamma^2 z,~~~~z=x+iy\in \mathbb{C}_+,\\
&g(z,0)=g_0(z).
\end{split}
\right.
\end{gather}
First let us list some simple estimates
 for  the Dyson equation \eqref{eq:meanfield}.
\\Fact 1 ($L^1$-conservation law):
 $\|\rho(\cdot,t)\|_{L^1(\mathbb{R})}=\|\rho_0\|_{L^1(\mathbb{R})}$. 
 \\Fact 2 (Second moment estimate): Multiplying \eqref{eq:meanfield} by $x^2$ and taking integral yield
$$
\frac{\di}{\di t}\int_{\mathbb{R}}x^2\rho(x,t)\di x=2\pi\int_{\mathbb{R}}x\rho H\rho \di x-2\gamma \int_{\mathbb{R}}x^2\rho(x,t)\di x
$$
Notice from \eqref{fact2}, we have
$$
\int_{\mathbb{R}}x\rho H\rho \di x  = \frac{1}{2\pi}\|\rho\|_{L^1}^2,
$$
hence
$$
\frac{\di}{\di t}\int_{\mathbb{R}}x^2\rho(x,t)\di x=\|\rho_0\|_{L^1}^2-2\gamma\int_{\mathbb{R}}x^2\rho(x,t)\di x,
$$
which implies 
$$m_2(t)=\frac{\|\rho_0\|_{L^1}^2}{2\gamma}-\frac{\|\rho_0\|_{L^1}^2-2\gamma m_2(0)}{2\gamma}e^{-2\gamma t},~~\forall t>0.$$
\\Fact 3 ($L^2$ estimate): Multiplying \eqref{eq:meanfield} by $\rho$ and integration by parts show that
\begin{equation*}
\frac{\di }{\di t} \int_{\mathbb{R}}{\rho^2} \di x + 2\int_{\mathbb{R}} \int_{\mathbb{R}} \rho(x,t) \frac{|\rho(x,t)-\rho(y,t)|^2}{|x-y|^2} \di x\di y = \gamma\int_{\mathbb{R}}\rho^2\di x;
\end{equation*}
see more details in the proof of Theorem \ref{thm:weaktheorem}.
\\Fact 4 ($\dot{H}^{\frac12}$ estimate): 
\begin{equation*}
\frac{\di}{\di t} \|(-\Delta)^{1/4} \rho\|_{L^2}^2 + \pi\int_{\mathbb{R}}(\partial_xH\rho)^2 \rho\di x + \pi\int_{\mathbb{R}} \rho (\partial_x\rho)^2\di x=2\gamma\|(-\Delta)^{1/4}\rho\|_{L^2}^2;
\end{equation*}
see more details in the proof of Theorem \ref{thm:weaktheorem}.
\\Fact 5 (Entropy estimate): Taking the time derivative to $\int_\mathbb{R} \rho \log \rho \di x$ and integration by parts show that
\begin{align*}
\frac{\di}{\di t} \int_\mathbb{R} \rho \log \rho \di x =& \int_{\mathbb{R}} \partial_t \rho(\log \rho+1) \di x = \int_{\mathbb{R}} -(\rho H \rho+ \gamma x \rho)_x(\log \rho+1) \di x \\
=&  \int_{\mathbb{R}}(H \rho - \gamma x) \rho_x \di x  = - \|(-\Delta)^{1/4}\rho\|_{L^2}^2
\end{align*}
\\Fact 6 (Energy dissipation): Since the Dyson equation is a $W^2$-gradient flow with respect to the energy \eqref{eq:interactionEnergy},
we  have the following energy dissipation property 
\begin{align*}
\frac{\di}{\di t}E(\rho)&=\int_{\mathbb{R}}\frac{\delta E}{\delta \rho}\cdot \partial_t\rho \di x=-\int_{\mathbb{R}}\rho\left|\partial_x\left(\frac{\delta E}{\delta \rho}\right)\right|^2 \di x\nonumber\\
&=-\int_{\mathbb{R}}\rho(x,t)\big|\gamma x-\pi H\rho(x,t)\big|^2 \di x.
\end{align*}
Now we have the following theorem:
\begin{theorem}\label{thm:analytic}
Let  $\gamma\geq 0$ and $0<\rho_0\in H^s(\mathbb{R})\cap L^1(\mathbb{R})$ with $s>1/2$. Then,

$\mathrm{(i)}$ The complex Burgers equation \eqref{eq:complexBurgers2} has a unique $\occ$-holomorphic solution $g(\cdot,t)$ for $t\in(0,\infty)$, and $\frac{\partial^k}{\partial t^k}g(\cdot,t)$ is an analytical function of $z$ on $\occ$ for any positive integer $k$ and $t>0$.

$\mathrm{(ii)}$ For any $t>0$, the trace of $f(z,t)=g(z,t)+\gamma z$ on the real line gives a positive analytical solution $\rho(x,t)>0$ to the Dyson equation  \eqref{eq:meanfield} with $\rho(x,0)=\rho_0(x)$ and  $\frac{\partial^k}{\partial t^k}\rho(x,t)$ is an analytical function  of $x\in\mathbb{R}$ for any positive integer $k$. The following estimates hold:
\begin{enumerate}
\item[(a)] The total mass $\|\rho(t)\|_{L^1}$ is conserved:
\begin{align}\label{eq:L1conserved}
\|\rho(t)\|_{L^1}=\|\rho_0\|_{L^1}.
\end{align}
\item[(b)] If $x^2\rho_0\in L^1(\mathbb{R})$, then the second moment $m_2(t):=\int_{\mathbb{R}}x^2\rho(x,t)\di x$ satisfies
\begin{gather}\label{eq:secondmoment}
m_2(t)=\left\{
\begin{split}
&\frac{\|\rho_0\|_{L^1}^2}{2\gamma}-\frac{\|\rho_0\|_{L^1}^2-2\gamma m_2(0)}{2\gamma}e^{-2\gamma t},~~\gamma>0,\\
&m_2(0)+\|\rho_0\|_{L^1}^2t,~~\gamma=0.
\end{split}
\right.
\end{gather}
\item[(c)] The following energy dissipation holds:
\begin{align}\label{eq:energydissipation1}
\frac{\di}{\di t}E(\rho)=-\int_{\mathbb{R}}\rho(x,t)\big|\gamma x-\pi H\rho(x,t)\big|^2 \di x,
\end{align}
with $E$ defined by \eqref{eq:interactionEnergy}. 
\item[(d)] If $\rho_0\log\rho_0\in L^1(\mathbb{R})$, then the entropy $\theta(t):=\int_{\mathbb{R}}\rho(x,t)\log\rho(x,t)\di x$ satisfies
\begin{align}\label{eq:entropy}
\theta(t)\leq \gamma\|\rho_0\|_{L^1}t+\theta(0).
\end{align}
\end{enumerate}

$\mathrm{(iii)}$ For $\gamma>0$, $g(z,t)$ converges to the steady state:
\[
\lim_{t\to\infty}g(z,t)=-\sqrt{\gamma^2z^2-2\gamma},~~\forall z\in\mathbb{C}_+,
\]
and $\rho(x,t)$ converges to the steady state given by semicircle law:
\begin{align}\label{eq:steadypho}
\lim_{t\to\infty}\rho(x,t)=\rho_\infty(x):=\frac{\sqrt{(2\gamma -\gamma^2x^2)_+}}{\pi},~~\forall x\in\mathbb{R}.
\end{align}

$\mathrm{(iv)}$ For $\gamma=0$, 
the solution $g(z,t)$ and $\rho(x,t)$ converge to steady state after scaling in the following sense:
\[
e^{t} g\left(e^{t}z,\frac{e^{2t-1}}{2}\right)- z\to -\sqrt{z^2-2}~\textrm{ as }~t\to\infty. 
\]
and
\[
e^{t} \rho\left(e^{t} x, \frac{e^{2t}-1}{2}\right)\to \frac{\sqrt{(2-x^2)_+}}{\pi}~\textrm{ as }~t\to\infty. 
\]
\end{theorem}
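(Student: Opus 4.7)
The strategy is a two-step reduction: via the space--time rescaling \eqref{eq:transform} the $\gamma>0$ problem collapses to the force-free one, and for $\gamma=0$ I would invoke the global real-analytic well-posedness of Castro--C\'ordoba \cite{Castro2008Global} and lift the real trace to an $\occ$-holomorphic object via the Stieltjes (Cauchy) transform. Concretely, \cite{Castro2008Global} gives a positive real-analytic solution $\tilde\rho(x,\tau)>0$ to \eqref{eq:meanfield} with $\gamma=0$ and data $\rho_0$, with all time derivatives real-analytic on $\mathbb{R}$. Define
\[
\tilde f(w,\tau):=\frac{1}{\pi}\int_{\mathbb{R}}\frac{\tilde\rho(s,\tau)}{w-s}\di s,\qquad w\in\mathbb{C}_+.
\]
Real-analyticity of $\tilde\rho$ on $\mathbb{R}$ lets $\tilde f$ extend holomorphically across the real axis into an $\occ$-holomorphic function with boundary values $(\pi H\tilde\rho)(x,\tau)-i\pi\tilde\rho(x,\tau)$. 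A direct check combining \eqref{eq:meanfield} and its Hilbert transform \eqref{eq:ubeta1} (both at $\gamma=0$) shows that $\tilde f$ solves \eqref{eq:complexB}; the inverse of \eqref{eq:transform} then produces the $\occ$-holomorphic solution $g$ of \eqref{eq:complexBurgers2} whose real-line trace gives the desired positive real-analytic $\rho$. Uniqueness is immediate from Plemelj--Sokhotski, which recovers $\rho$ from any $\occ$-holomorphic solution; analyticity of $\partial_t^k g$ and $\partial_t^k\rho$ follows by time-differentiating the representation.

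The estimates (a)--(d) of part (ii) are then a rigorous reading of Facts 1--6: (a) is Fact 1; (b) is the integration of the linear ODE in Fact 2; (c) is Fact 6. For (d), the entropy identity adapted for the $\gamma$ drift gives
\[
\frac{\di}{\di t}\theta(t) = -\pi\|(-\Delta)^{1/4}\rho\|_{L^2}^2+\gamma\|\rho_0\|_{L^1} \leq \gamma\|\rho_0\|_{L^1},
\]
using $\int H\rho\,\partial_x\rho\,\di x=-\|(-\Delta)^{1/4}\rho\|_{L^2}^2$ (Fourier side identity) and $-\int x\partial_x\rho\,\di x=\|\rho_0\|_{L^1}$; integration in time gives (d).

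Parts (iii) and (iv) are linked by \eqref{eq:transform}. Part (iv), self-similar pointwise convergence in the force-free setting, I would prove by the analytic-characteristic method of Rogers--Shi \cite{rogers1993interacting} carried out in Appendix \ref{App_B}: track the implicit characteristic equation $w\mapsto w+\tau\tilde f_0(w)$ for $\tilde f$ on $\mathbb{C}_+$ and show that after the self-similar rescaling it equilibrates to the characteristic map of the Stieltjes transform $-\sqrt{z^2-2}$ of the semicircle $\mu_1$. Part (iii) is then immediate: substituting this limit into the inverse of \eqref{eq:transform} produces $g_\infty(z)=-\sqrt{\gamma^2z^2-2\gamma}$ and the trapped semicircle $\rho_\infty(x)=\pi^{-1}\sqrt{(2\gamma-\gamma^2x^2)_+}$.

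The main obstacle is ensuring that the complex characteristic map remains invertible on $\mathbb{C}_+$ for all time---equivalently, that the analyticity strip of $\tilde\rho$ does not collapse---which requires the positivity hypothesis $\rho_0>0$ to prevent characteristic crossings and underpins both the Castro--C\'ordoba global analyticity input and the pointwise convergence analysis in Appendix \ref{App_B}.
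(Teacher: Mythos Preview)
Your proposal is correct and matches the paper's approach: reduction to $\gamma=0$ via \eqref{eq:transform}, Castro--C\'ordoba for analytic well-posedness, Facts~1--6 for the estimates in (ii), and the Rogers--Shi characteristic analysis of Appendix~\ref{App_B} for the long-time behavior. The only cosmetic difference is that the paper proves (iii) directly for $\gamma=1$ by tracking the confined characteristics $Z(w,t)=w\cosh t+g_0(w)\sinh t$ and then deduces (iv) via \eqref{eq:transform}, whereas you run that implication in the opposite direction; since \eqref{eq:transform} is a bijection between the two problems and the self-similarly rescaled $\gamma=0$ solution \emph{is} the $\gamma=1$ solution, the computations are identical either way.
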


We remark that  part (i) of Theorem \ref{thm:analytic} is derived directly by  combining the solutions given by \cite{Castro2008Global} and the space-time rescaling \eqref{eq:transform} as described below.
Consider the following complex Burgers equation
\begin{gather}\label{eq:complexBurgers3}
\left\{
\begin{split}
&[\partial_\tau \tilde{g}+ \tilde{g}\partial_w \tilde{g}](w,\tau)=0,~~~~w \in \mathbb{C}_+,\\
& \tilde{g}(w,0)=g_0(w)+\gamma w,
\end{split}
\right.
\end{gather}
where $g_0$ is defined by \eqref{eq:initialg}.
Castro and C\'ordoba \cite{Castro2008Global}  proved global existence and uniqueness of $\mathbb{C}_+$-holomorphic solution $\tilde{g}$  to \eqref{eq:complexBurgers3}   by the method of characteristics. For $t>0$, $\tilde{g}(\cdot,t)$ is $\occ$-holomorphic. Hence, from \eqref{eq:transform} we obtain  a $\mathbb{C}_+$-holomorphic solution $g$  to \eqref{eq:complexBurgers2} with initial datum $g_0$ and for $t>0$, $g(\cdot,t)$ is $\occ$-holomorphic. This proves part (i) of Theorem \ref{thm:analytic}. For part (ii), let
\[
f(z,t):=g(z,t)+\gamma z,~~z\in\occ,~~t>0.
\]
Then, $f$ is a to $\mathbb{C}_+$-holomorphic solution to \eqref{eq:complexBurgers1} with initial datum $f_0$ given  by \eqref{eq:initialf} and for $t>0$, $g(\cdot,t)$ is $\occ$-holomorphic. Consider the trace of $f$ on the real line and define
\[
f(x,t):=u(x,t)-i\pi\rho(x,t),~~x\in\mathbb{R},~~t>0.
\]
Then, we have $u=\pi H\rho$ and $\rho(x,t)$ is an analytical solution to the Dyson equation \eqref{eq:meanfield} with initial datum $\rho_0$. This proves part (ii) of Theorem \ref{thm:analytic}.

Since, the acceleration of characteristics for complex Burgers \eqref{eq:complexBurgers2} is not zero, which is different with \eqref{eq:complexBurgers3}. This also brings some detailed information of solutions. Therefore, for completeness and to unveil those information, we provide a direct proof for Theorem \ref{thm:analytic} in Appendix \ref{App_B}. 

\begin{remark}[Finite time blow up]\label{eq:blowup}
Note that condition $\rho_0>0$ is essential to Theorem \ref{thm:analytic}.
Castro and C\'ordoba \cite[Theorem 4.4, Remark 4.5]{Castro2008Global} proved that if $\rho_0\geq0$  and $\rho_0\in H^2(\mathbb{R})$, then there exists a unique local solution $\rho\in C([0,T];H^2(\mathbb{R}))\cap C^1([0,T];H^1(\mathbb{R}))$ to \eqref{eq:meanfield} with $\gamma=0$.
Moreover, if $\rho_0(x_0)=0=\inf_{x\in\mathbb{R}}\rho_0(x)$ for some point $x_0\in\mathbb{R}$, the solution blows up in finite time (see \cite[Theorem 4.8, Remark 4.9]{Castro2008Global}). Precisely,  along the trajectories of characteristics $X(x_0,t)$ starting from $x_0$, we have 
\[
X(x_0,t)=H\rho_0(x_0)t+x_0,
\]
and
\[
\partial_xH\rho(X(x_0,t),t)\to-\infty~\textrm{ as }~t\to t^*:=-\frac{1}{\partial_xH\rho_0(x_0)}.
\]
Due to  \eqref{eq:transform}, there exists a unique  local solution $\tilde{\rho}$ to \eqref{eq:meanfield} for $\gamma>0$ given by
\[
\tilde{\rho}(y,\tau)=e^{\gamma \tau} \rho\left(e^{\gamma\tau} y, \frac{e^{2\gamma \tau}-1}{2\gamma}\right),~~y\in\mathbb{R},~~\tau>0.
\]
Moreover, we have
\[
\partial_yH\tilde{\rho}(y,\tau)=e^{2\gamma\tau}\partial_xH\rho\left(e^{\gamma\tau} y, \frac{e^{2\gamma \tau}-1}{2\gamma}\right).
\]
Let 
\[
t=\frac{e^{2\gamma\tau}-1}{2\gamma},\quad y=e^{-\gamma\tau}X(x_0,t)=e^{-\gamma\tau}\left[\frac{H\rho_0(x_0)(e^{2\gamma\tau}-1)}{2\gamma}+x_0\right],
\]
and
\[
\tau^*=\frac{1}{2\gamma}\log(1+2\gamma t^*).
\]
Then, we have
\begin{align*}
\lim_{\tau\to\tau^*}\partial_yH\tilde{\rho}(y,\tau)&=\lim_{\tau\to\tau^*}e^{2\gamma \tau} \partial_xH\rho\left(e^{\gamma\tau} y, \frac{e^{2\gamma \tau}-1}{2\gamma}\right)\\
&=e^{2\gamma \tau^*}\lim_{t\to t^*} \partial_xH\rho\left(X(x_0,t), t\right)=-\infty.
\end{align*}
Hence, the solution to \eqref{eq:meanfield} with $\gamma>0$ also blows up in finite time.
\end{remark}

\subsection{Explicit solutions to the Dyson equation \eqref{eq:meanfield} from semicircle law and exponential convergence to the steady state for $\gamma>0$}\label{sec:explicit}
In this subsection, we give some explicit solutions to the Dyson equation \eqref{eq:meanfield} by using Wigner's semicircle law \eqref{eq:semicirclelaw}. When $\gamma>0$ the explicit solutions  converge exponentially to steady state given by \eqref{eq:steadypho}.

\subsubsection{An explicit solution to the Dyson equation  \eqref{eq:meanfield} with $\gamma=0$}
For $\gamma=0$, notice that $\sqrt{N}A(t)/\sqrt{t}$ is a Wigner matrix (Hermitian matrix with i.i.d entries which have mean zero and variance one), where $A(t)$ is defined by \eqref{eq:OUP} with $A(0)=0$. Let $\{\lambda_j(t)\}_{j=1}^N$ be the eigenvalues of matrix $A(t)$. Hence, as $N$ goes to infinity, the empirical measure $\frac{1}{N}\sum_{j=1}^N\delta_{\lambda_j(t)/\sqrt{t}}$ almost surely converges to Wigner's semicircle law $\mu_1(x)$ given by \eqref{eq:semicirclelaw} weakly in probability measure space (see \cite{wigner1955characteristic} or \cite[Theorem  2.4.2]{Tao2012Topics}). On the other hand,  the empirical measure $\mu^N(t)=\frac{1}{N}\sum_{j=1}^N\delta_{\lambda_j(t)}(x)$ almost surely converges to a measure solution $\rho(x,t)$ of the Dyson equation \eqref{eq:meanfield} with $\gamma=0$ \cite{rogers1993interacting}. We can obtain the relation between $\rho(x,t)$  and $\mu_1(x)$ by the following lemma.
\begin{lemma}\label{lmm:semicirclesolution}
For any constant $a>0$, if we have the following narrow convergences in probability measure space $\mathcal{P}(\mathbb{R})$:
\[
\tilde{\nu}^N(x):=\frac{1}{N}\sum_{j=1}^N\delta_{x_j/a}(x)\to \tilde{\nu}(x)~\textrm{ and }~\nu^N(x):= \frac{1}{N}\sum_{j=1}^N\delta_{x_j}(x)\to \nu(x)
\]
for two probability measures $\tilde{\nu},\nu$, then we have
\begin{align}\label{eq:relationmunu}
\nu(x)=\frac{1}{a}\tilde{\nu}\left(\frac{x}{a}\right).
\end{align}
\end{lemma}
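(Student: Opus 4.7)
The plan is to identify $\nu$ with the pushforward of $\tilde\nu$ under the scaling map $y \mapsto ay$, which is precisely the measure-theoretic content of the claimed identity $\nu(x) = \frac{1}{a}\tilde\nu(x/a)$. Everything follows from testing both convergences against bounded continuous test functions and exploiting that the two empirical measures share the same atom weights $1/N$ and are linked by a simple change of variable.

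First, I would fix an arbitrary $\phi \in C_b(\mathbb{R})$ and write down the two narrow-convergence statements explicitly:
\begin{equation*}
\frac{1}{N}\sum_{j=1}^N \phi(x_j) \longrightarrow \int_{\mathbb{R}} \phi(x)\,\di \nu(x), \qquad \frac{1}{N}\sum_{j=1}^N \psi(x_j/a) \longrightarrow \int_{\mathbb{R}} \psi(y)\,\di \tilde\nu(y),
\end{equation*}
valid for every $\psi \in C_b(\mathbb{R})$. The key step is to choose $\psi(y) := \phi(ay)$, which lies in $C_b(\mathbb{R})$ since $a>0$ is a fixed constant. Substituting, the second limit becomes $\frac{1}{N}\sum_{j=1}^N \phi(x_j) \to \int \phi(ay)\,\di \tilde\nu(y)$, and the left-hand side is identical to the left-hand side of the first limit.

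By uniqueness of narrow limits, I then conclude
\begin{equation*}
\int_{\mathbb{R}} \phi(x)\,\di \nu(x) = \int_{\mathbb{R}} \phi(ay)\,\di \tilde\nu(y) \qquad \text{for every } \phi \in C_b(\mathbb{R}).
\end{equation*}
Since $C_b(\mathbb{R})$ is a determining class for finite Borel measures on $\mathbb{R}$, this identity exactly says that $\nu$ is the pushforward $(S_a)_{\#}\tilde\nu$ of $\tilde\nu$ under the dilation $S_a: y \mapsto ay$. Performing the change of variable $x = ay$ in the right-hand integral rewrites this pushforward, at the level of densities, as $\nu(x) = \frac{1}{a}\tilde\nu(x/a)$, which is the claimed formula \eqref{eq:relationmunu}.

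The argument is essentially a one-line change-of-variable in narrow convergence, so there is no real obstacle; the only point worth emphasizing is that the formal density relation $\nu(x) = \frac{1}{a}\tilde\nu(x/a)$ should be read as the pushforward identity $\nu = (S_a)_{\#}\tilde\nu$, so that it is meaningful even when $\tilde\nu$ has no Lebesgue density (e.g.\ discrete approximants along the way), and reduces to the stated density formula whenever $\tilde\nu$ is absolutely continuous — which is the case when $\tilde\nu = \mu_1$ is Wigner's semicircle law.
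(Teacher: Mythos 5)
Your proof is correct and follows essentially the same route as the paper: fix a bounded continuous test function, exploit that the two empirical measures assign weight $1/N$ to related atoms, perform the change of variable $\psi(y) = \phi(ay)$, and invoke uniqueness of narrow limits together with the fact that $C_b(\mathbb{R})$ is a determining class. The paper runs the change of variable in the opposite direction (starting from $\int \varphi\,\di\tilde\nu$ and landing on $a\int \varphi(x)\,\di\nu(ax)$), but that is only a cosmetic difference; your remark that the density formula should be interpreted as the pushforward identity $\nu = (S_a)_{\#}\tilde\nu$ is a welcome clarification that the paper leaves implicit.
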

\begin{proof}
For any test function $\varphi\in C_b(\mathbb{R})$, we have
\begin{align*}
\int_{\mathbb{R}}\varphi(x)\di \tilde{\nu}(x)&=\lim_{N\to\infty}\int_{\mathbb{R}}\varphi(x)\di \tilde{\nu}^N(x)=\frac{1}{N}\sum_{j=1}^N\varphi(x_j/a)\\
&=\lim_{N\to\infty}\int_{\mathbb{R}}\varphi(y/a)\di \nu^N(y)=a\lim_{N\to\infty}\int_{\mathbb{R}}\varphi(x)\di \nu^N(ax)\\
&=\int_{\mathbb{R}}\varphi(y/a)\di \nu(y)=a\int_{\mathbb{R}}\varphi(x)\di \nu(a x).
\end{align*}
Hence, $a\nu(ax)=\tilde{\nu}(x)$, which implies \eqref{eq:relationmunu}.

\end{proof}
From Lemma \ref{lmm:semicirclesolution}, we choose $\rho$ as the rescaling of $\rho_1$ defined in \eqref{eq:semicirclelaw}
\begin{align}\label{eq:explicitsolution}
\rho(x,t)=\frac{1}{\sqrt{t}}\rho_1\left(\frac{x}{\sqrt{t}}\right)=\frac{\sqrt{(4t-x^2)_+}}{2\pi t},
\end{align}
where $\rho(x,t)$ is the limit of the empirical measure $\frac{1}{N}\sum_{j=1}^N\delta_{\lambda_j(t)}(x)$ for $\gamma=0$. This implies  $\rho(x,t)$ is  a kind of self-similar rarefaction wave solution of the Dyson equation \eqref{eq:meanfield} with $\gamma=0$.
Next, we calculate $u(x,t)$ using the Hilbert transform of $\pi \rho(x,t)$ and then verify the obtained $(\rho, u)$ satisfies \eqref{eq:meanfield} ($\gamma=0$) . For $x\in\mathbb{R}\setminus[-2\sqrt{t},2\sqrt{t}]$, by changing of variable with $y=2\sqrt{t}\sin\theta$,  we have
\begin{align}\label{eq:calus}
(\pi H\rho)(x,t)=&\frac{1}{2t\pi}\int_{-2\sqrt{t}}^{2\sqrt{t}}\frac{\sqrt{4t-y^2}}{x-y}\di y\nonumber\\
=&\frac{1}{2t\pi}\int_{-\pi/2}^{\pi/2}\left(x+2\sqrt{t}\sin\theta\right)\di \theta +\frac{4t-x^2}{2t\pi}\int_{-\pi/2}^{\pi/2}\frac{1}{x-2\sqrt{t}\sin\theta}\di \theta\nonumber\\
=&\frac{x}{2t} +\frac{4t-x^2}{2t\pi}\int_{-\pi/2}^{\pi/2}\frac{1}{x-2\sqrt{t}\sin\theta}\di \theta\nonumber\\
=&\frac{x}{2t} +\frac{4t-x^2}{2t\pi}\frac{2}{\sqrt{x^2-4t}}\left[\arctan\left(\frac{x-2\sqrt{t}}{\sqrt{x^2-4t}}\right)+\arctan\left(\frac{x+2\sqrt{t}}{\sqrt{x^2-4t}}\right)\right].
\end{align}
Using the fact
\begin{gather*}
\arctan x+\arctan y =\left\{
\begin{split}
\frac{\pi}{2}~\textrm{ for }~x\cdot y=1,~~x,y>0,\\
-\frac{\pi}{2}~\textrm{ for }~x\cdot y=1,~~x,y<0,
\end{split}
\right.
\end{gather*}
we obtain
\begin{gather}\label{eq:Htran2}
(\pi H\rho)(x,t)=\left\{
\begin{split}
&\frac{x}{2t} +\frac{\sqrt{x^2-4t}}{2t},~~x<-2\sqrt{t},\\
&\frac{x}{2t} -\frac{\sqrt{x^2-4t}}{2t},~~x>2\sqrt{t}.
\end{split}
\right.
\end{gather}
For $x\in[-2\sqrt{t},2\sqrt{t}]$, we have 
\begin{align*}
(\pi H\rho)(x,t)=&\frac{1}{2t\pi}\p\int_{-2\sqrt{t}}^{2\sqrt{t}}\frac{\sqrt{4-y^2}}{x-y}\di y\\
=&\frac{1}{2t\pi}\lim_{\e\to0}\left(\int_{-2\sqrt{t}}^{x-\e}+(\int^{2\sqrt{t}}_{x+\e}\right)\frac{\sqrt{4-y^2}}{x-y}\di y.
\end{align*}
Then, using similar calculation as \eqref{eq:calus} we have
$
(\pi H\rho)(x,t)=\frac{x}{2t},~~x\in[-2\sqrt{t},2\sqrt{t}].
$
Therefore we have
\begin{gather}\label{app7}
u(x,t)=(\pi H\rho)(x,t)=\left\{
\begin{split}
&\frac{x+\sqrt{x^2-4t}}{2t},~~x<-2\sqrt{t},\\
&\frac{x}{2t},~~x\in[-2\sqrt{t},2\sqrt{t}],\\
&\frac{x-\sqrt{x^2-4t}}{2t},~~x>2\sqrt{t},
\end{split}
\right.
\end{gather}
and  $(\rho,u)$ satisfies \eqref{eq:meanfield} ($\gamma=0$) with initial datum
\begin{equation}\label{eq:initialdatumexplicit}
\rho(x,0)= \delta(0),~~u(x,0)=(\pi H\rho)(x,0)=\mathrm{p.v.}\frac{1}{x}.
\end{equation}
Notice that the above self-similar solution $(\rho,u)$ corresponds to the self-similar solution to complex Burgers equation given in \cite[Section 1.2]{Menon1}.

In Appendix \ref{app_A} we will give the same  explicit solution by the Stieltjes transform of Wigner's semicircle law $\mu_1$  (see  \eqref{eq:Lowurho} ).

\begin{remark}[Connection with Barenblatt solutions to porous media equation]\label{rmk:Barenblatt}
Consider the following one dimensional porous media equation:
\[
\partial_th=\frac{\pi^2}{3}\partial_{xx}(h^3),\quad h|_{t=0}=\delta(0).
\]
It has a self-similar solution called Barenblatt solution (see \cite[Page 104]{salsa2016partial}) given by
\[
h(x,t)=\frac{\sqrt{(4\sqrt{t}-x^2)_+}}{2\pi \sqrt{t}}=\frac{1}{t^{1/4}}\cdot \frac{\sqrt{\left(4-\left(\frac{x}{t^{1/4}}\right)^2\right)_+}}{2\pi }.
\]
Notice that
\[
\rho(x,t)=h(x,t^2)=\frac{\sqrt{(4t-x^2)_+}}{2\pi t}
\]
is exactly the explicit solution \eqref{eq:explicitsolution} to the Dyson equation \eqref{eq:meanfield} with $\gamma=0$.
\end{remark}

\subsubsection{An explicit solution to the Dyson equation   \eqref{eq:meanfield} with $\gamma>0$ and exponential convergence to the steady state}
When $\gamma>0$, we first show that \eqref{eq:explicitsolution} with $t=\frac{1}{2\gamma}$ gives a steady state of \eqref{eq:meanfield} with $\gamma>0$. Actually, we have
\[
\rho\left(x,\frac{1}{2\gamma}\right)=\frac{\sqrt{(2\gamma-\gamma^2 x^2)_+}}{\pi},
\]
and
\begin{gather*}
u\left(x,\frac{1}{2\gamma}\right)=\pi H\rho\left(x,\frac{1}{2\gamma}\right)=\left\{
\begin{split}
&\gamma x+\sqrt{\gamma^2x^2-2\gamma},~~x<-\sqrt{2},\\
& \gamma x,\quad x\in[-\sqrt{2},\sqrt{2}],\\
&\gamma x-\sqrt{\gamma^2x^2-2\gamma},~~x>\sqrt{2}.
\end{split}
\right.
\end{gather*}
Define
\begin{align}\label{eq:steadystate}
\rho_\infty(x):=\rho\left(x,\frac{1}{2\gamma}\right),~~u_\infty(x)=u\left(x,\frac{1}{2\gamma}\right),
\end{align}
and then
\[
\rho_\infty(u_\infty- \gamma x)\equiv0,
\]
which implies that $\rho_\infty$ is a steady state of the Dyson equation \eqref{eq:meanfield} when $\gamma>0$. Due to the convexity of the energy $E$ in \eqref{eq:interactionEnergy}, the steady state is the minimizer and it is unique (see Remark \ref{rmk:convergence}).

Next, we construct an explicit solution which converges to $\rho_\infty$ exponentially. Let $\sigma(t)$ be an unknown function and $\sigma(0)=\sigma_0>0$ and assume  solution $\rho(x,t)$ to \eqref{eq:meanfield} with $\gamma>0$ has the following form
\begin{align}\label{eq:exponentialrhot}
\rho(x,t)=\frac{\sqrt{(2\sigma(t)-x^2)_+}}{\pi \sigma(t)}.
\end{align}
Correspondingly, we have
\begin{gather*}
u(x,t)=\pi H\rho(x,t)=\left\{
\begin{split}
&\frac{x+\sqrt{x^2-2\sigma(t)}}{\sigma(t)},\quad x<-\sqrt{2\sigma(t)},\\
&\frac{x}{\sigma(t)},\quad\quad \quad  |x|\leq \sqrt{2\sigma(t)},\\
&\frac{x-\sqrt{x^2-2\sigma(t)}}{\sigma(t)},\quad x>\sqrt{2\sigma(t)}.
\end{split}
\right.
\end{gather*}
Obviously, $(\rho, u)$ satisfies \eqref{eq:meanfield} when $|x|>\sqrt{2\sigma(t)}$. Next, we consider the case $|x|\leq \sqrt{2\sigma(t)}$ to obtain a proper ordinary differential equation for $\sigma(t)$ such that $(\rho,u)$ is a solution of \eqref{eq:meanfield}. Direct calculations show that
\[
\partial_t\rho=-\frac{\sqrt{2\sigma-x^2}}{\pi\sigma^2}\dot{\sigma}+\frac{\dot{\sigma}}{\pi\sigma\sqrt{2\sigma-x^2}},\quad \partial_x\rho=-\frac{x}{\pi\sigma\sqrt{2\sigma-x^2}},
\]
and
\[
\rho+x\partial_x\rho=2\sigma\left(\frac{\sqrt{2\sigma-x^2}}{\pi \sigma^2}-\frac{1}{\pi\sigma\sqrt{2\sigma-x^2}}\right).
\]
Take the above equalities into  \eqref{eq:meanfield} and we obtain
\begin{align*}
&\partial_t\rho+\partial_x[\rho(u-\gamma x)]=\partial_t\rho+\left(\frac{1}{\sigma}-\gamma\right)(\rho+x\partial_x\rho)\nonumber\\
=&(-\dot{\sigma}+2-2\gamma\sigma)\left(\frac{\sqrt{2\sigma-x^2}}{\pi \sigma^2}-\frac{1}{\pi\sigma\sqrt{2\sigma-x^2}}\right)=0,~~|x|\leq \sqrt{2\sigma(t)}.
\end{align*}
Hence, we have
\[
\dot{\sigma}(t)=2-2\gamma\sigma,~~\sigma(0)=\sigma_0>0,
\]
which implies
\[
\sigma(t)=\frac{1}{\gamma}-\frac{1-\gamma\sigma_0}{\gamma}e^{-2 \gamma t}>0.
\]
Hence, for any $\sigma_0>0$, an explicit solution to \eqref{eq:meanfield}  is given by
\begin{align}\label{eq:explicitrho}
\rho(x,t)=\frac{\sqrt{\left(2\gamma [1-(1-\gamma\sigma_0)e^{-2\gamma t}]-\gamma^2x^2\right)_+}}{\pi [1-(1-\gamma \sigma_0)e^{-2\gamma t}]}.
\end{align}
This solution tends to $\rho_\infty$ (defined by \eqref{eq:steadystate}) exponentially as $t\to\infty$.

\subsection{Global weak solutions of the Dyson equation \eqref{eq:meanfield}}\label{sec:weaksolutions}
In Theorem \ref{thm:analytic}, we proved global existence and uniqueness of a positive analytical solution to \eqref{eq:meanfield} with a strictly positive initial datum $\rho_0>0$ and $\rho_0\in H^s(\mathbb{R})\cap L^1(\mathbb{R})$ with $s>1/2$. If $\rho_0\geq0$ and $\rho_0(x_0)=0$ for some $x_0\in\mathbb{R}$, 
the solution to \eqref{eq:meanfield} blows up in finite time (see Remark \ref{eq:blowup}) in the sense that $\partial_xH\rho$ goes to $-\infty$. Consequently, there is also a finite time blow up in the space $H^s(\mathbb{R})$ for $s>3/2$.  Next, we show global existence of weak solution in $\dot{H}^{1/2}(\mathbb{R})\cap L^1(\mathbb{R})$. Note that we have interpolation inequality
 \[
 \|\rho\|_{L^2}\leq 3\|\rho\|_{L^1}^{1/2}\|\rho\|^{1/2}_{\dot{H}^{1/2}}.
 \]
 Hence $\rho \in \dot{H}^{1/2}(\mathbb{R})\cap L^1(\mathbb{R})$ is equivalent to $\rho \in H^{1/2}(\mathbb{R})\cap L^1(\mathbb{R})$. Let us define the weak solutions:
\begin{definition}\label{def:weak1}
For $T>0$, $\rho_0\in H^{1/2}(\mathbb{R})\cap L^1(\mathbb{R})$ and $\rho_0\geq0$, a nonnegative function $\rho\in L^\infty(0,T;H^{1/2}(\mathbb{R})\cap L^1(\mathbb{R}))\cap W^{1,\infty}(0,T;H^{-m}(\mathbb{R}))$ for some $m>0$  is said to be a weak solution of the Dyson equation \eqref{eq:meanfield} if
\begin{multline}\label{eq:defweak1}
\int_0^T\int_{\mathbb{R}}\partial_t\phi(x,t)\rho( x,t) \,\di x\di t+\int_{\mathbb{R}}\phi(x,0)\rho_0(x)\di x\\
=-\frac{1}{2}\int_0^T\int_{\mathbb{R}}\int_{\mathbb{R}}\frac{\partial_x\phi(x,t)-\partial_x\phi(y,t)}{x-y}\rho(x,t)\rho(y,t)\,\di x\di y\di t\\
+\gamma\int_0^T\int_{\mathbb{R}}x\partial_x\phi(x,t)\rho(x,t)\di x\di t,
\end{multline}
holds for any test function $\phi\in C_c^\infty(\mathbb{R}\times[0,T))$.
\end{definition}

\begin{theorem}\label{thm:weaktheorem}
Assume $0\leq \rho_0\in H^{1/2}(\mathbb{R})\cap L^1(\mathbb{R})$ and $m_2(0):=\int_{\mathbb{R}}x^2\rho_0(x)\di x<\infty$. Then, there exists a unique global nonnegative weak solution to the Dyson equation \eqref{eq:meanfield} satisfying
\[
\rho\in L^\infty(0,T; H^{1/2}(\mathbb{R})\cap L^1(\mathbb{R}))\cap W^{1,\infty}(0,T;H^{-3}(\mathbb{R}))
\]
for any time $T>0$. Moreover, we have the following estimates
\begin{enumerate}
\item[(a)] 
\begin{align}\label{eq:solutionestimate}
\|\rho(t)\|_{H^{1/2}}\leq e^{\gamma t}\|\rho_0\|_{H^{1/2}},~~t>0,
\end{align}

\item[(b)] The mass $\|\rho(t)\|_{L^1}$ is conserved:
\begin{align}\label{eq:L1conserved1}
\|\rho(t)\|_{L^1}=\|\rho_0\|_{L^1}.
\end{align}
\item[(c)] For a.e. $t>0$, the second moment $m_2(t):=\int_{\mathbb{R}}x^2\rho(x,t)\di x$ satisfies
\begin{gather}\label{eq:secondmoment1}
m_2(t)\leq \left\{
\begin{split}
&\frac{\|\rho_0\|_{L^1}^2}{2\gamma}-\frac{\|\rho_0\|_{L^1}^2-2\gamma m_2(0)}{2\gamma}e^{-2\gamma t},~~\gamma>0,\\
&m_2(0)+\|\rho_0\|_{L^1}^2t,~~\gamma=0.
\end{split}
\right.
\end{gather}
\item[(d)] The following energy dissipation holds:
\begin{align}\label{eq:energydissipation2}
E(\rho(\cdot,t))+\int_0^t\int_{\mathbb{R}}\rho(x,s)\big|\gamma x-\pi H\rho(x,s)\big|^2 \di x\di s\leq E(\rho_0)~\textrm{ for any }~t>0,
\end{align}
with $E$ defined by \eqref{eq:interactionEnergy}. 
\item[(e)] If $\rho_0\log\rho_0\in L^1(\mathbb{R})$, then the entropy $\theta(t):=\int_{\mathbb{R}}\rho(x,t)\log\rho(x,t)\di x$ satisfies
\begin{align}\label{eq:entropy2}
\theta(t)\leq \gamma\|\rho_0\|_{L^1}t+\theta(0),\quad t>0.
\end{align}
\end{enumerate}

\end{theorem}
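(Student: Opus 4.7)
The plan is to construct $\rho$ as a limit of the positive analytical solutions furnished by Theorem \ref{thm:analytic}, obtain the listed estimates by passing Facts 1--6 to the limit, and extract uniqueness from the Wasserstein gradient-flow structure highlighted in Remark \ref{rmk:convergence}.

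\textbf{Step 1 (Regularization and a priori estimates).} Fix a mollifier $\eta_\e$ and a strictly positive Schwartz function $\phi_0$ with $\int_{\mathbb{R}}\phi_0\di x = 1$. Set $\rho_0^\e := \rho_0 * \eta_\e + \e \phi_0$, so that $\rho_0^\e > 0$, $\rho_0^\e \in H^s(\mathbb{R}) \cap L^1(\mathbb{R})$ for every $s \ge 0$, the second moment is uniformly bounded, and $\rho_0^\e \to \rho_0$ strongly in $H^{1/2}(\mathbb{R}) \cap L^1(\mathbb{R})$. Theorem \ref{thm:analytic} gives a global positive analytical solution $\rho^\e$ of \eqref{eq:meanfield} with datum $\rho_0^\e$, to which Facts 1--6 apply rigorously. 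Facts 1, 2, 5, 6 directly provide the mass conservation (b), the second-moment identity (c), the entropy bound (e), and the energy-dissipation identity underlying (d). Fact 3 yields $\|\rho^\e(t)\|_{L^2}^2 \leq e^{\gamma t}\|\rho_0^\e\|_{L^2}^2$, while Fact 4 (using non-negativity of the two dissipation terms) gives $\|\rho^\e(t)\|_{\dot H^{1/2}}^2 \leq e^{2\gamma t}\|\rho_0^\e\|_{\dot H^{1/2}}^2$; together these produce the uniform bound (a).

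\textbf{Step 2 (Time regularity).} For $\phi \in H^3(\mathbb{R})$ I symmetrize $\pi \int \rho H\rho\, \partial_x \phi \di x = \tfrac12 \iint \tfrac{\partial_x\phi(x) - \partial_x\phi(y)}{x-y}\rho(x)\rho(y)\di x\di y$ and estimate the kernel by $\|\partial_x^2\phi\|_{L^\infty} \lesssim \|\phi\|_{H^3}$, while $\|\partial_x\phi\|_{L^\infty} \lesssim \|\phi\|_{H^3}$ controls the linear drift against $x\rho^\e$ via the second-moment bound. This yields a uniform bound
\[
\|\partial_t \rho^\e\|_{H^{-3}(\mathbb{R})} \lesssim \|\rho^\e\|_{L^1}^2 + \|\rho^\e\|_{L^1}^{1/2}\, m_2^\e(t)^{1/2},
\]
uniformly in $\e$ and $t \in [0,T]$.

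\textbf{Step 3 (Compactness and passage to the limit).} The Aubin--Lions lemma with $H^{1/2}(\mathbb{R}) \hookrightarrow\hookrightarrow L^2_{loc}(\mathbb{R}) \hookrightarrow H^{-3}(\mathbb{R})$ yields, up to a subsequence, $\rho^\e \to \rho$ in $C([0,T]; L^2_{loc}(\mathbb{R}))$ together with $\rho^\e \overset{*}{\rightharpoonup} \rho$ in $L^\infty(0,T; H^{1/2}(\mathbb{R}))$. Strong $L^2_{loc}$ convergence, combined with the uniform second-moment bound that rules out mass escape to infinity, suffices to pass to the limit in the bilinear kernel $\iint \tfrac{\partial_x\phi(x) - \partial_x\phi(y)}{x-y}\rho^\e(x)\rho^\e(y)\di x\di y$ for any $\phi \in C_c^\infty$, and in the linear drift. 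The estimates (a)--(c) survive the limit by weak lower semicontinuity and Fatou; the convex energy and entropy in (d) and (e) do as well by lower semicontinuity along the obtained mode of convergence.

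\textbf{Step 4 (Uniqueness).} This is the main obstacle, since the drift $\pi H\rho - \gamma x$ is only weakly controlled by $\rho$ and straightforward Gronwall arguments in $L^1$ or $L^2$ fail because of the nonlocal Hilbert transform. The plan is to invoke the Wasserstein gradient-flow framework: $E_{\textnormal{i}}$ is convex along generalized geodesics and $E_{\textnormal{h}}$ is $\gamma$-convex along Wasserstein geodesics, so the theory of \cite{Ambrosio}, extended to the logarithmic interaction kernel as in \cite{carrillo2012mass}, provides a unique EVI gradient flow satisfying the $W_2$-contraction $W_2(\rho(t), \tilde\rho(t)) \leq e^{-\gamma t} W_2(\rho(0), \tilde\rho(0))$. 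Identifying the weak solution produced in Step 3 with this EVI solution is then a matter of matching the energy-dissipation identity from Fact 6 with the metric derivative of $W_2$, which is routine given the estimates (a)--(d) already in hand; uniqueness in the class of Definition \ref{def:weak1} follows.
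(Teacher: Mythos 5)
Your proposal follows the same regularize--compactify--identify strategy as the paper: mollify the initial data, apply Theorem \ref{thm:analytic} to get positive analytical approximants, prove uniform $L^\infty(0,T;H^{1/2})$ and $W^{1,\infty}(0,T;H^{-3})$ bounds, pass to the limit via Aubin--Lions, and invoke the $W_2$-contraction \eqref{eq:contraction} from the gradient-flow structure for uniqueness. Steps 1--2 track the paper almost verbatim, and your remark on uniqueness (that one must identify the constructed weak solution with the EVI gradient flow before the contraction applies) is actually more candid than the paper's own one-line Step 4.

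Where you gloss over a genuine step is in part (d). To apply Fatou to the dissipation integral $\int_0^t\int_{\mathbb{R}}\rho^\e|\gamma x-\pi H\rho^\e|^2\,\di x\,\di s$ you need a.e.\ (or locally strong) convergence of $H\rho^\e$, and this does not follow from $\rho^\e\to\rho$ in $L^\infty(0,T;L^2_{loc})$ alone, since $H$ is nonlocal and does not map $L^2_{loc}$ to $L^2_{loc}$. The paper closes this by running a second Aubin--Lions argument on $u^\e=\pi H\rho^\e$: the evolution equation \eqref{eq:ubeta1} for $u^\e$ yields a uniform bound on $\partial_t u^\e$ in $L^\infty(0,T;H^{-2})$, and the $H^{1/2}$ bound on $H\rho^\e$ is inherited from the one on $\rho^\e$, so $u^\e\to u$ in $L^\infty(0,T;L^2_{loc})$, hence pointwise a.e.\ along a subsequence. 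Likewise, ``lower semicontinuity of the convex energy'' is not automatic here: the kernel $-\log|x-y|$ is unbounded both above and below, so $E$ is not a standard weakly lower-semicontinuous convex integral functional of $\rho$. The paper's device is to write $E(\rho)=\tfrac12\iint K(x,y)\rho(x)\rho(y)\,\di x\,\di y-\tfrac c2$ with $K(x,y):=\tfrac12\gamma(x^2+y^2)-\log|x-y|+c\ge0$, truncate $K_N:=\min\{K,N\}$, and apply Levi's lemma and Fatou against the a.e.\ convergence of $\rho^\e$. Both of these are recoverable from the estimates you already have in hand, but as written your Step 3 asserts rather than proves them.
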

\begin{proof}
Let $\varphi_\e>0$ ($\e>0$) be the standard Friedrichs mollifier. Set
\[
\rho_{0}^\e=\rho_0\ast\varphi_\e.
\]
Then, for nontrival initial datum $\rho_0$, we have $\rho_{0}^\e(x)>0$ for $x\in\mathbb{R}$ and $\rho_{0}^\e\in H^s(\mathbb{R})\cap L^1(\mathbb{R})$ ($s>1/2$). Moreover, from Young's inequality for convolution, we have
\begin{align}\label{eq:rhoe}
\|\rho_0^\e\|_{L^2}\leq \|\rho_0\|_{L^2},\quad \|\rho_0^\e\|_{\dot{H}^{1/2}}\leq \|\rho_0\|_{\dot{H}^{1/2}},\quad \|\rho_0^\e\|_{L^1}=\|\rho_0\|_{L^1}.
\end{align}
By Theorem \ref{thm:analytic}, we have a global positive analytical solution $\rho^\e$ to \eqref{eq:meanfield} with initial date $\rho_0^\e$:
\begin{align}\label{eq:meanfield1}
\partial_t\rho^\e+\partial_x[\rho^\e (\pi H\rho^\e-\gamma x)]=0.
\end{align}

\textbf{Step 1.} Uniform estimates for $\rho^\e$.

First, multiplying \eqref{eq:meanfield1} by $\rho^\e$ and integration by parts show that
\begin{align*}
\frac{\di }{\di t} \int_{\mathbb{R}} \frac{(\rho^\e)^2}{2} \di x + \frac{\pi}{2}\int_{\mathbb{R}} (\rho^\e)^2 \partial_xH\rho^\e\di x-\frac{\gamma}{2}\int_{\mathbb{R}}(\rho^\e)^2\di x=0.
\end{align*}
Since the second term on the left hand side is
\begin{align*}
\frac{\pi}{2}\int_{\mathbb{R}} (\rho^\e)^2 \partial_xH\rho^\e\di x&=\int_{\mathbb{R}}\int_{\mathbb{R}} (\rho^\e(x,t))^2 \frac{\rho^\e(x,t)-\rho^\e(y,t)}{|x-y|^2} \di y \di x\\
&= \int_{\mathbb{R}} \int_{\mathbb{R}} \rho^\e(x) \frac{|\rho^\e(x,t)-\rho^\e(y,t)|^2}{|x-y|^2} \di x\di y,   
\end{align*}
we obtain
\begin{equation}\label{eq:L2}
\frac{\di }{\di t} \int_{\mathbb{R}}{(\rho^\e)^2} \di x + 2\int_{\mathbb{R}} \int_{\mathbb{R}} \rho^\e(x,t) \frac{|\rho^\e(x,t)-\rho^\e(y,t)|^2}{|x-y|^2} \di x\di y = \gamma\int_{\mathbb{R}}(\rho^\e)^2\di x.
\end{equation}
Gr\"onwall's inequality and \eqref{eq:rhoe} imply
\begin{align}\label{eq:L2estimate}
\|\rho^\e(t)\|_{L^2}^2\leq e^{\gamma t}\|\rho_0^\e\|_{L^2}^2\leq e^{\gamma t}\|\rho_0\|_{L^2}^2,~~t>0.
\end{align}
Second, multiplying \eqref{eq:meanfield1} by $H\rho_x^\e$ gives the following estimate:
\begin{align}\label{eq:H120}
\frac{1}{2} \frac{\di}{\di t} \|(-\Delta)^{1/4} \rho^\e\|_{L^2}^2 + \pi\int_{\mathbb{R}^2}(\partial_xH\rho^\e)^2 \rho^\e \di x+ \pi\int_{\mathbb{R}} \partial_x\rho^\e \partial_xH \rho^\e H \rho^\e\di x-\gamma\int_{\mathbb{R}}\partial_xH\rho^\e\partial_x(x\rho^\e)=0.
\end{align}
On the one hand, we have
\begin{align}\label{eq:term3}
\pi\int_{\mathbb{R}} \partial_x\rho^\e \partial_xH \rho^\e H \rho^\e\di x=- \pi\int_{\mathbb{R}} H( \partial_x\rho^\e \partial_xH \rho^\e) \rho^\e\di x =-\frac{\pi}{2} \int_{\mathbb{R}}  [(\partial_xH\rho^\e)^2 - (\partial_x\rho^\e)^2]\rho^\e\di x.
\end{align}
On the other hand, we estimate the last term in \eqref{eq:H120} as below.
Due to \eqref{eq:formula}, we derive 
\begin{align*}
\gamma\int_{\mathbb{R}}\partial_xH\rho^\e\partial_x(x\rho^\e)=&-\gamma\int_{\mathbb{R}}\partial_x\rho^\e\partial_xH(x\rho^\e)=-\gamma\int_{\mathbb{R}}\partial_x\rho^\e\partial_x(xH\rho^\e)\di x\\
&=-\gamma\int_{\mathbb{R}}\partial_x\rho^\e H\rho^\e\di x-\gamma\int_{\mathbb{R}}x\partial_x\rho^\e\partial_xH\rho^\e\di x\\
&=\gamma\|(-\Delta)^{1/4}\rho^\e\|_{L^2}^2-\gamma\int_{\mathbb{R}}x\partial_x\rho^\e\partial_xH\rho^\e\di x.
\end{align*}
Use \eqref{eq:formula} again and we have
\[
-\gamma\int_{\mathbb{R}}x\partial_x\rho^\e\partial_xH\rho^\e\di x=\gamma\int_{\mathbb{R}}H(x\partial_x\rho^\e) \partial_x\rho^\e\di x=\gamma\int_{\mathbb{R}}x\partial_xH\rho^\e\partial_x\rho^\e\di x.
\]
This implies $\gamma\int_{\mathbb{R}}x\partial_x\rho^\e\partial_xH\rho^\e\di x=0$ and hence
\begin{align}\label{eq:gammaterm}
\gamma\int_{\mathbb{R}}\partial_xH\rho^\e\partial_x(x\rho^\e)=\gamma\|(-\Delta)^{1/4}\rho^\e\|_{L^2}^2.
\end{align}
Combining \eqref{eq:H120}, \eqref{eq:term3} and \eqref{eq:gammaterm} shows
\begin{equation}\label{eq:H12}
\frac{\di}{\di t} \|(-\Delta)^{1/4} \rho^\e\|_{L^2}^2 + \pi\int_{\mathbb{R}}(\partial_xH\rho^\e)^2 \rho^\e\di x + \pi\int_{\mathbb{R}} \rho^\e (\partial_x\rho^\e)^2\di x=2\gamma\|(-\Delta)^{1/4}\rho^\e\|_{L^2}^2.
\end{equation}
Gr\"onwall's inequality and \eqref{eq:rhoe} imply
\begin{align}\label{eq:Hdot}
\|\rho^\e(t)\|_{\dot{H}^{1/2}}^2\leq e^{2\gamma t}\|\rho_0\|_{\dot{H}^{1/2}}^2, \quad t>0.
\end{align}
Inequalities \eqref{eq:L2estimate} and \eqref{eq:Hdot} yield 
\begin{align}\label{lions1}
\|\rho^\e(t)\|_{{H}^{1/2}}^2\leq e^{2\gamma t}\|\rho_0\|_{{H}^{1/2}}^2,  \quad t>0.
\end{align}
and hence we have
\[
\rho^\e\in L^\infty(0,T;H^{1/2}(\mathbb{R}))~\textrm{ for any }~T>0.
\]

Third, for time regularity, the following estimate holds for any $\phi\in C_c^\infty(\mathbb{R})$
\begin{align*}
&\int_{\mathbb{R}}\phi(x)\partial_t\rho^\e(x,t)\di x \\
=&-\frac{1}{2} \int_{\mathbb{R}}\int_{\mathbb{R}}\frac{\partial_x\phi(x)-\partial_x\phi(y)}{x-y}\rho^\e(x,t){\rho}^\e(y,t)\di x\di y
+\gamma\int_{\mathbb{R}}x\phi_x(x,t){\rho^\e}( x,t)\di x\\
\leq& C(\|\partial_{xx}\phi\|_{L^\infty}+(m_2(\rho^\e) + \|\rho^\e\|_{L^1})\|\partial_{x}\phi\|_{L^\infty})\leq C\|\phi\|_{H^3},
\end{align*}
and hence
\begin{align}\label{eq:time}
\|\partial_t{\rho}^\e\|_{L^\infty(0,\infty;H^{-3}(\mathbb{R}))}\leq C, \quad 
\partial_t{\rho}^\e\in L^\infty(0,\infty;H^{-3}(\mathbb{R})).
\end{align}

\textbf{Step 2.} Take limits for $\rho^\e$ as $\e$ goes to $0$.

First, from uniform estimates \eqref{lions1} and \eqref{eq:time} in Step 1,  there exist $\rho\in L^\infty(0,T; H^{1/2}(\mathbb{R}))\cap W^{1,\infty}(0,T;H^{-3}(\mathbb{R}))$ and a subsequence of $\{{\rho}^\e\}_{\e>0}$ (still denoted as $\{{\rho}^\e\}_{\e>0}$) such that 
\[
\rho^\e\overset{\ast}{\rightharpoonup} \rho~\textrm{ in }~L^\infty(0,T;H^{1/2}(\mathbb{R}))~\textrm{ as }~\e\to0,
\]
and
\[
\partial_t\rho^\e\overset{\ast}{\rightharpoonup} \partial_t\rho~\textrm{ in }~L^\infty(0,T;H^{-3}(\mathbb{R}))~\textrm{ as }~\e\to0.
\]
Hence, we have \eqref{eq:solutionestimate}.

Second, from \eqref{lions1}  and \eqref{eq:time}, by Lions-Aubin Lemma, we also know
\[
{\rho}^\e\to \rho~\textrm{ in }~L^\infty(0,T;L_{loc}^2(\mathbb{R}))~\textrm{ as }~\e\to0,
\]
and consequently
\begin{equation}\label{strong2}
{\rho}^\e\to \rho~\textrm{ in }~L^\infty(0,T;L_{loc}^1(\mathbb{R}))~\textrm{ as }~\e\to0.
\end{equation}
Due to $\|\rho^\e(t)\|_{L^1}\equiv\|\rho_0\|_{L^1}$, we have 
\[
\|\rho(t)\|_{L^1}=\lim_{R\to+\infty}\int_{-R}^R\rho(x,t)\di x=\lim_{R\to+\infty}\lim_{\e\to0}\int_{-R}^R\rho^\e(x,t)\di x\equiv\|\rho_0\|_{L^1},
\]
where the last step we used the uniform bound of second momentum for $\rho^\e$ \eqref{eq:secondmoment}.
Hence, 
\[
\rho\in L^\infty(0,T;L^1(\mathbb{R})),
\]
and \eqref{eq:L1conserved1} holds. 
For any test function $\phi\in C_c^\infty(\mathbb{R}\times [0,T))$, by \eqref{eq:rhoe} we have
\begin{multline}\label{eq:defweak}
\int_0^T\int_{\mathbb{R}}\partial_t\phi(x,t){\rho}^\e( x,t) \,\di x\di t+\int_{\mathbb{R}}\phi(x,0)\rho_0^\e(x)\di x\\
=-\frac{1}{2}\int_0^T\int_{\mathbb{R}}\int_{\mathbb{R}}\frac{\partial_x\phi(x,t)-\partial_x\phi(y,t)}{x-y}{\rho}^\e(x,t){\rho}^\e(y,t)\,\di x\di y\di t\\
+\gamma\int_0^T\int_{\mathbb{R}}x\partial_x\phi(x,t){\rho}^\e(x,t)\di x\di t,
\end{multline}
By the strong convergence of ${\rho}^\e$ in \eqref{strong2}, we can take the limit as $\e\to0$ in \eqref{eq:defweak} and conclude that $\rho$ satisfies \eqref{eq:defweak1}. Hence, $\rho$ is a global weak solution to \eqref{eq:meanfield}.

\textbf{Step 3.} Consequent estimates for $H\rho$.
First, from \eqref{lions1} and
\begin{equation}
\|(-\Delta)^{\frac14} \rho^\e\|_{L^2}^2 = \int_{\mathbb{R}} (H\rho^\e) H\partial_x (H\rho^\e) \di x= \|(-\Delta)^{\frac14} (H\rho^\e)\|_{L^2}^2,
\end{equation}
we have uniform estimates
\begin{equation}\label{equ:high}
\|H \rho^\e\|_{L^{\infty}(0,T; H^{\frac12}(\mathbb{R}))} \leq C \text{ for any }T>0.
\end{equation}
Second, from the equation for $u^\e$ \eqref{eq:ubeta1} with $u^\e=\pi H \rho^\e$, we have
for any $\phi\in C_c^\infty(\mathbb{R})$
\begin{align*}
&\int_{\mathbb{R}}\phi(x)\partial_t u^\e(x,t)\di x 
=\int_{\mathbb{R}} \partial_x \phi(x) \left[\frac{(u^\e)^2}{2} - \frac{\pi^2}{2} (\rho^\e)^2-\gamma x u^\e\right] \di x\\
\leq& C\|\partial_{x}\phi\|_{L^\infty} \|\rho^\e\|_{L^2}^2+
\int_{\mathbb{R}} \gamma H(x \partial_x \phi) \rho^\e \di x
=  C\|\partial_{x}\phi\|_{L^\infty} \|\rho^\e\|_{L^2}^2+
\int_{\mathbb{R}} \gamma H( \partial_x \phi) x  \rho^\e \di x\\
\leq& C[\|\partial_{x}\phi\|_{L^\infty} \|\rho^\e\|_{L^2}^2+ (m_2(\rho^\e)^{\frac12} \|\rho^\e\|_{L^2}^{\frac12})\|H\partial_x \phi\|_{L^4}]\leq C(\|\partial_{x}\phi\|_{L^\infty}+ \|\partial_x \phi\|_{L^4} ),
\end{align*}
and hence
\begin{align}\label{equ:time}
\|\partial_t{u}^\e\|_{L^\infty(0,\infty;H^{-2}(\mathbb{R}))}\leq C, \quad 
\partial_t{u}^\e\in L^\infty(0,\infty;H^{-2}(\mathbb{R})).
\end{align}
Similar to $\rho^\e$, combining \eqref{equ:high}, \eqref{equ:time} and Lions-Aubin Lemma, we also know for $u=\pi H \rho\in L^\infty(0,T; H^{1/2}(\mathbb{R}))\cap W^{1,\infty}(0,T;H^{-2}(\mathbb{R}))$,
\begin{align}
u^\e\overset{\ast}{\rightharpoonup} u~\textrm{ in }~L^\infty(0,T;H^{1/2}(\mathbb{R}))~\textrm{ as }~\e\to0,\\
\partial_t u^\e\overset{\ast}{\rightharpoonup} \partial_t u~\textrm{ in }~L^\infty(0,T;H^{-2}(\mathbb{R}))~\textrm{ as }~\e\to0,\\
{u}^\e\to  u~\textrm{ in }~L^\infty(0,T;L_{loc}^2(\mathbb{R}))~\textrm{ as }~\e\to0.
\end{align}
Consequently, we have for a.e. $t\in[0,T]$
\begin{equation}\label{ae-cH}
H\rho^\e(\cdot,t)\to H \rho(\cdot,t)~\textrm{ for a.e. }x\in\mathbb{R}~\textrm{ as }~\e\to0.
\end{equation}

\textbf{Step 4.} The uniqueness of weak solutions is a direct result of the contraction property of Wasserstein distance as stated in \eqref{eq:contraction}.

\textbf{Step 5.} We prove properties \eqref{eq:secondmoment1}, \eqref{eq:energydissipation2}, and \eqref{eq:entropy2} below. 

Due to \eqref{eq:secondmoment}, we have 
\begin{gather}\label{mom2}
m^\e_2(t)=\left\{
\begin{split}
&\frac{\|\rho_0\|_{L^1}^2}{2\gamma}-\frac{\|\rho_0\|_{L^1}^2-2\gamma m^\e_2(0)}{2\gamma}e^{-2\gamma t},~~\gamma>0,\\
&m^\e_2(0)+\|\rho_0\|_{L^1}^2t,~~\gamma=0,
\end{split}
\right.
\end{gather}
where
\[
m^\e_2(t):=\int_{\mathbb{R}}x^2\rho^\e(x,t)\di x.
\]
Due to strong convergence of $\rho^\e$ to $\rho$ in $L^\infty(0,T;L^1_{loc}(\mathbb{R}))$, for a.e. $t\in(0,T)$ we have 
\begin{equation}\label{ae-c}
\rho^\e(\cdot,t)\to\rho(\cdot,t)~\textrm{ for a.e. }x\in\mathbb{R}~\textrm{ as }~\e\to0.
\end{equation}
To take the limit in \eqref{mom2}, first notice $m_2^\e(0)\to m_2(0)$ by Young's convolution inequality. Second, by Levi's lemma and Fatou's lemma,  we have 
\begin{equation}
\begin{aligned}
m_2(t) = & \lim_{N\to +\infty} \int_{\mathbb{R}}(x^2)_N \rho  \di x
\leq   \lim_{N\to +\infty}    \liminf_{\e\to 0} \int_{\mathbb{R}}(x^2)_N \rho^\e \di x\\\leq& \lim_{N\to +\infty}    \liminf_{\e\to 0} \int_{\mathbb{R}} x^2 \rho^\e \di x \leq \liminf_{\e\to 0} m_2^\e(t),
\end{aligned}
\end{equation}
where $(x^2)_N$ means the cutoff $(x^2)_N= \min\{x^2, N\}$.
Hence, we obtain \eqref{eq:secondmoment1}. 

For the energy dissipation \eqref{eq:energydissipation2}, we prove it by taking limit in  \eqref{eq:energydissipation1}, Levi's lemma and Fatou's Lemma. First  by pointwise convergence of $\rho^\e$ in \eqref{ae-c}, pointwise convergence of $H\rho^\e$ in \eqref{ae-cH}  and Fatou's lemma, we have
\begin{equation}
\int_0^T \int_{\mathbb{R}} \rho |\gamma x- \pi H \rho(x,t)|^2 \di x \leq \liminf_{\e\to 0}  \int_0^T \int_{\mathbb{R}} \rho^\e |\gamma x- \pi H \rho^\e(x,t)|^2 \di x .
\end{equation}
Second, there exists a constant $c$ such that 
$K(x,y):= \frac12\gamma (x^2+ y^2) + \log \frac{1}{|x-y|}+ c\geq 0$, so we
rewire the energy as 
\begin{equation}
E(\rho)= \frac12 \int_{\mathbb{R}^2 } K(x,y) \rho(x) \rho(y) \di x \di y-\frac{c}{2}.
\end{equation}
Denote the cutoff of $K$ as $K_N(x,y):= \min\{K(x,y), N\}$ such that $0\leq K_N(x,y)\leq  K(x,y)$, which increasingly converges to $K(x,y)$ for a.e. $(x,y)\in \mathbb{R}^2$.  Then by  Levi's lemma and Fatou's Lemma, we obtain
\begin{equation}
\begin{aligned}
E(\rho) + \frac{c}{2} = & \lim_{N\to + \infty} \frac12\int_{\mathbb{R}^2} K_N(x,y) \rho(x) \rho(y) \di x \di y\\
\leq & \lim_{N \to +\infty} \liminf_{\e \to 0} \frac12 \int_{\mathbb{R}^2} K_N(x,y) \rho^\e(x) \rho^\e(y) \di x \di y\\
\leq & \lim_{N \to +\infty} \liminf_{\e \to 0} \frac12 \int_{\mathbb{R}^2} K(x,y) \rho^\e(x) \rho^\e(y) \di x \di y\\
\leq & \liminf_{\e \to 0}  E(\rho^\e) + \frac{c}{2}.
\end{aligned}
\end{equation}

The entropy inequality \eqref{eq:entropy2} can be obtained by \eqref{eq:entropy} and the weak lower semi-continuity of the entropy \cite{jordan1998variational}.

\end{proof}

\begin{remark}
We shall remark that the global existence of weak solutions for the following nonconservative equation remains open:
\[
\partial_t\rho-u\partial_x\rho=0,~~u=H\rho.
\]
We refer to \cite{cordoba2005formation,Silvestre} for in depth study of this equation with or without a viscous term.
\end{remark}

\begin{remark}[Exponential convergence to the steady state]\label{rmk:convergence}
Carrillo et. al. \cite{carrillo2012mass} proved the existence and uniqueness of probability solutions by using gradient flow structure in Wasserstein distance.
Notice  the free energy $E(\rho)$ given by \eqref{eq:interactionEnergy} for the Dyson equation consists a harmonic trap  energy $E_{\textnormal{h}}$ and an interaction energy
$E_{\textnormal{i}}$. $E_{\textnormal{i}}$ is convex (or displacement convex) along generalized Wasserstein geodesics and $E_{\textnormal{h}}$ is $\gamma$-convex along Wasserstein geodesics as explained below.
Assume $\rho_0,\rho_1\in\mathcal{P}_{\text{AC}}(\mathbb{R})$ and $T: \rho_0 \di x \to \rho_1 \di y$ is $W_2$-optimal transport (Bernier's map). Then 
$\rho_t:= [tI + (1-t)T ]_\# \rho_0$
is a Wasserstein geodesics (or displacement interpolation between $\rho_0$ and $\rho_1$). From the definition of push forward (see \cite[Section 5.2]{Ambrosio}), 
\begin{align*}
E_{\textnormal{h}}(\rho_t)=&\gamma\int_{\mathbb{R}}\frac{x^2}{2}\rho_t(\di x)= \gamma\int_{\mathbb{R}} \frac{[t x +(1-t)T(x)]^2}{2} \rho_0(\di x) \\
=& \gamma\int_{\mathbb{R}} \frac{t x^2 -t(1-t)(x-T(x))^2+(1-t)T^2(x)}{2} \rho_0(\di x)\\
=& tE_{\textnormal{h}}(\rho_0)+(1-t) E_{\textnormal{h}}(\rho_1) - \gamma\frac{t(1-t)}{2} W_2^2(\rho_0, \rho_1).
\end{align*}
Therefore $E_{\textnormal{h}}(\rho)$ is $\gamma$-geodesically convex  (see  \cite[Definition 2.4.3]{Ambrosio}).  For the geodesical convexity of the interaction energy $E_{\textnormal{i}}(\rho)$, due to the singularity in logarithmic function, it relies heavily on monotonicity of optimal map.  We illustrate the idea for $\rho_0, \rho_1\in \mathcal{P}_{\text{AC}}(\mathbb{R}), \rho_0>0$, which ensures the optimal map $T$ is strictly increasing.
\begin{align*}
E_i(\rho_t)=& \int_{\mathbb{R}^2} - \log(|x-y|) \rho_t(\di x ) \rho_t(\di y) \\
=& \int_{\mathbb{R}^2} - \log (|t(x-y)+(1-t)(T(x)-T(y))|)\rho_0(x) \rho_0(y) \di x \di y\\
\leq & t \int_{\mathbb{R}^2} - \log|x-y| \rho_0(x) \rho_0(y) \di x \di y + (1-t)  \int_{\mathbb{R}^2} - \log|T(x)-T(y)| \rho_0(x) \rho_0(y) \di x \di y \\
= & t \int_{\mathbb{R}^2} - \log|x-y| \rho_0(x) \rho_0(y) \di x \di y + (1-t)  \int_{\mathbb{R}^2} - \log|x-y| \rho_1(x) \rho_1(y) \di x \di y \\
=& t E_i(\rho_0)+(1-t) E_i(\rho_1),
\end{align*}
where we used the convexity of logarithmic function in the first inequality and strict increase of $T$ in the third equality.
However, without the strictly  increasing property, we refer to \cite[Proposition 2.7]{carrillo2012mass}, where Carrillo et. al. proved the generalized geodesic convex of $E_i(\rho)$ using the essential monotonicity property (excluding a null set) of the optimal transport maps between absolutely continuous probability measures in one dimension.  The standard gradient flow theory \cite[Theorem 11.2.1]{Ambrosio} yields the exponential convergence to the steady state in $W_2$ distance; see also \cite{carrillo2012mass, berman2019propagation}. More precisely, if $\rho$ and $\tilde{\rho}$ are two probability measure solutions for initial date $\rho_0$ and $\tilde{\rho}_0$ separately, then we have
\begin{align}\label{eq:contraction}
W_2(\rho(t),\tilde{\rho}(t))\leq e^{-\gamma t}W_2(\rho_0,\tilde{\rho}_0).
\end{align}
This implies the uniqueness of probability measure solutions and exponential convergence to the steady state.

When $\gamma>0$, we also remark that $\gamma$-convexity of $E$ implies the uniqueness of the steady state (minimizer). Indeed, if $\mu$ and $\nu$ are two distinct minimizers, consider $\mu_{1/2}:= [\frac{1}{2}I + \frac{1}{2}\tilde{T} ]_\# \mu$, where $\tilde{T}$ is Bernier's map between $\mu$ and $\nu$. Then, we have
\[
E(\mu_{1/2})\leq \frac{1}{2}[E(\mu)+E(\nu)]-\frac{\gamma}{8} W_2^2(\mu, \nu)<\frac{1}{2}[E(\mu)+E(\nu)],
\]
which is a contradiction with that $\mu$ and $\nu$ are distinct.
\end{remark}

\section{Bi-Hamiltonian structures}\label{sec:bihamiltonian}
In this section, we construct a bi-Hamiltonian structure for the coupled Burgers system \eqref{eq:systemEuler1} by using the decoupled Burgers equations \eqref{eq:decouple1} and \eqref{eq:decouple2}. 
First, we present infinite many conserved quantities for the coupled Burgers system  \eqref{eq:systemEuler1}. Recall \eqref{fpm}.
Because $\int_{\mathbb{R}}f_{\pm}^k(x,t)\,\di x$ are conserved quantities of the decoupled Burgers equations \eqref{eq:decouple1} and \eqref{eq:decouple2}, we have the following proposition.
\begin{proposition}
Let $(\rho,u)$ be a classical solution to the coupled Burgers system  \eqref{eq:systemEuler1}. Then, quantities
\begin{align}\label{eq:conservedq}
\lambda_1\int_{\mathbb{R}}(u+\sqrt{\alpha}\rho)^{k_1}\di x+\lambda_2\int_{\mathbb{R}}(u-\sqrt{\alpha}\rho)^{k_2} \di x
\end{align}
are conserved for any constants $\lambda_1,\lambda_2\in\mathbb{C}$ and any positive integers $k_1,~k_2\in \mathbb{N}_+$.
\end{proposition}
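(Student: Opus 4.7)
The plan is to exploit the fact, already established in the excerpt, that the Riemann invariants $f_\pm := u \pm \sqrt{\alpha}\,\rho$ of a classical solution $(\rho,u)$ satisfy the decoupled (inviscid) Burgers equations \eqref{eq:decouple1}--\eqref{eq:decouple2}. From there, the problem reduces to showing that $\int_{\mathbb{R}} f_\pm^k \, dx$ is conserved for every positive integer $k$; linearity in $\lambda_1,\lambda_2$ then gives the full statement.

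First I would observe that for any solution $f$ of $\partial_t f + f\partial_x f = 0$ one has, by multiplying by $k f^{k-1}$,
\begin{equation*}
\partial_t(f^k) + \frac{k}{k+1}\,\partial_x\!\bigl(f^{k+1}\bigr) = 0,
\end{equation*}
which is a local conservation law in divergence form. Applying this to $f = f_+$ and to $f = f_-$ gives two separate conservation laws. Integrating each over $\mathbb{R}$ and assuming the decay implicit in the notion of classical solution (so that the spatial flux $f_\pm^{k+1}$ vanishes at $\pm\infty$, e.g. $f_\pm \in C^1$ with sufficient decay, or compact support modulo the appropriate constants), the boundary terms drop and one obtains
\begin{equation*}
\frac{d}{dt}\int_{\mathbb{R}} f_\pm(x,t)^{k}\, dx = 0.
\end{equation*}

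Taking the linear combination with coefficients $\lambda_1,\lambda_2\in\mathbb{C}$ and exponents $k_1,k_2\in\mathbb{N}_+$ yields conservation of the quantity in \eqref{eq:conservedq}. The only substantive issue, and the one I would flag explicitly, is the integrability/decay assumption: the expressions $\int f_\pm^k\,dx$ must be finite and the flux must vanish at infinity, which is a standing hypothesis tucked inside the word ``classical'' but deserves a line of comment (alternatively, one could pass to a periodic setting or work with $f_\pm - c_\pm$ for appropriate constants $c_\pm$, so that the integrands lie in $L^1$). Everything else is a one-line manipulation, so I would keep the proof brief and emphasize the divergence-form identity above as the single computational step.
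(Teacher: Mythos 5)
Your proposal is correct and takes essentially the same route as the paper: the paper simply asserts, without written computation, that $\int_{\mathbb{R}} f_\pm^k\,\di x$ are conserved quantities of the decoupled Burgers equations \eqref{eq:decouple1}--\eqref{eq:decouple2} and lets the proposition follow by linearity, whereas you have supplied the one-line divergence identity $\partial_t(f^k)+\tfrac{k}{k+1}\partial_x(f^{k+1})=0$ that justifies it, together with the appropriate caveat about decay of the flux at spatial infinity.
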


\begin{remark}\label{rmk:realquantities}
Notice that when $\alpha<0$, we have $f_-=\bar{f}_+$. When  $\lambda_1=\bar{\lambda}_2$ and $k_1=k_2=k$ in \eqref{eq:conservedq}, we have
\[
\overline{\lambda_1(u+\sqrt{\alpha}\rho)^k}=\lambda_2(u-\sqrt{\alpha}\rho)^k.
\]
In this case, \eqref{eq:conservedq} gives real conserved quantities.
\end{remark}
Next, we consider the case  for $k_1=k_2=3$ in \eqref{eq:conservedq} and derive a bi-Hamiltonian structure for the coupled Burgers system \eqref{eq:systemEuler1}. Define the following functionals of $f_\pm (=u\pm\sqrt{\alpha}\rho)$:
\begin{align}\label{eq:conservedf}
H_1^f(f_+,f_-):=\int_{\mathbb{R}}\frac{f_+^3 + f_-^3}{12}\di x,\quad  H_2^f(f_+,f_-):=\int_{\mathbb{R}}\frac{f_+^3 - f_-^3}{12\sqrt{\alpha}}\di x.
\end{align}
Due to Remark \ref{rmk:realquantities}, we know that both $H_1^f$ and $H_2^f$ are  real conserved quantities.
Moreover, the decoupled Burgers equations \eqref{eq:decouple1} and \eqref{eq:decouple2} can be rewritten as
\begin{gather}\label{eq:decoupled2}
\left\{
\begin{split}
&\partial_tf_++2\partial_x\left(\frac{\delta{H^f_1}}{\delta f_+}\right)=0,\\
&\partial_tf_-+2\partial_x\left(\frac{\delta{H^f_1}}{\delta f_-}\right)=0,
\end{split}
\right.~~\textrm{ and }~~ \left\{
\begin{split}
&\partial_tf_++2\sqrt{\alpha}\partial_x\left(\frac{\delta{H^f_2}}{\delta f_+}\right)=0,\\
&\partial_tf_--2\sqrt{\alpha}\partial_x\left(\frac{\delta{H^f_2}}{\delta f_-}\right)=0.
\end{split}
\right.
\end{gather}
Define 
\begin{align}\label{eq:generalAlpha}
H^u_1(\rho,u):=\int_{\mathbb{R}}\left(\frac{1}{6}u^3+\frac{\alpha}{2}\rho^2 u\right) \di x,\quad H^u_2(\rho,u):=\int_{\mathbb{R}}\left(\frac{1}{2}\rho u^2+\frac{\alpha}{6}\rho^3\right) \di x.
\end{align}
Then, direct calculations show that 
\begin{align}\label{eq:relation}
H^{u}_j(\rho,u)=H^{f}_j(f_+,f_-),\quad j=1,2,
\end{align}
and we have the following theorem:
\begin{theorem}\label{thm:biHamiltonian}
For $\alpha\neq0$, the coupled Burgers system \eqref{eq:systemEuler1} has a bi-Hamiltonian structure:
\begin{equation}\label{eq:EulerHamiltonian}
\frac{\partial}{\partial t}
\begin{pmatrix}
\rho\\
u
\end{pmatrix}
=
J
\begin{pmatrix}
\frac{\delta H^{u}_1}{\delta \rho} \\ \frac{\delta H^{u}_1}{\delta u}
\end{pmatrix}=
K
\begin{pmatrix}
\frac{\delta H^{u}_2}{\delta \rho} \\ \frac{\delta H^{u}_2}{\delta u}
\end{pmatrix},
\end{equation}
where $J$ and $K$ are anti-symmetric operators given by
\begin{align}\label{eq:operators1}
J:=\begin{pmatrix}
-\frac{1}{\alpha}\partial_x  & 0\\
0  & -\partial_x
\end{pmatrix},
\quad K:=\begin{pmatrix}
0 & -\partial_x \\
-\partial_x   & 0
\end{pmatrix}.
\end{align}

\end{theorem}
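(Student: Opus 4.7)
The proof is essentially a direct verification, so the plan has two stages. First, I would compute the four variational derivatives
\begin{equation*}
\frac{\delta H^u_1}{\delta \rho} = \alpha\rho u, \quad \frac{\delta H^u_1}{\delta u} = \tfrac{1}{2}u^2 + \tfrac{\alpha}{2}\rho^2, \quad \frac{\delta H^u_2}{\delta \rho} = \tfrac{1}{2}u^2 + \tfrac{\alpha}{2}\rho^2, \quad \frac{\delta H^u_2}{\delta u} = \rho u,
\end{equation*}
and then apply $J$ and $K$ to the respective gradient vectors. The factor $1/\alpha$ in the upper-left entry of $J$ cancels against the prefactor $\alpha$ in $\delta H^u_1/\delta\rho$, so in both cases one lands on the common right-hand side $(-\partial_x(\rho u),\,-\partial_x(\tfrac{1}{2}u^2 + \tfrac{\alpha}{2}\rho^2))^T$, which is exactly \eqref{eq:systemEuler1}. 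This proves both equalities in \eqref{eq:EulerHamiltonian}.

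Second, to legitimately call the structure \emph{bi-Hamiltonian} in the sense of Magri, I would check that $J$ and $K$ form a compatible Poisson pair. Anti-symmetry of each operator is immediate by one integration by parts under decay at infinity. Because $J$ and $K$ are constant-coefficient antisymmetric first-order differential operators (their entries do not depend on $\rho$ or $u$), the Jacobi identity for each associated bracket $\{F,G\}_{J} := \int (\delta F/\delta q)^T J\,(\delta G/\delta q)\,\di x$, and likewise for $K$, reduces to a trivial algebraic cancellation. The same argument applied to the pencil $J+\lambda K$ yields compatibility for every $\lambda \in \mathbb{R}$, so $(J,K)$ is a Hamiltonian pair.

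A conceptually cleaner derivation, which I would mention as motivation for the formulas rather than carry out in full, starts from the two Hamiltonian formulations \eqref{eq:decoupled2} of the decoupled Burgers equations and pulls them back through the linear change of variables $f_\pm = u\pm\sqrt{\alpha}\rho$. The identities \eqref{eq:relation} guarantee that $H^f_j$ and $H^u_j$ coincide, while the two diagonal constant-coefficient operators on the $(f_+,f_-)$ side transform under $\tilde{J}\mapsto A J A^T$ (with $A$ the Jacobian of the change of variables) into precisely $J$ and $K$ on the $(\rho,u)$ side, up to scalar normalisation.

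I do not anticipate a genuine obstacle: the verification is mechanical. The only points where care is needed are the sign and the $1/\alpha$ prefactor in $J$, which must match the $\alpha$ inside $\delta H^u_1/\delta \rho$ to produce the conservation law, and, in the alternative derivation, the bookkeeping of the $\sqrt{\alpha}$ factors (which become imaginary when $\alpha<0$). For that reason I would present the direct verification first and invoke the decoupled Burgers picture afterwards solely for interpretation.
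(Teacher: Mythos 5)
Your direct-verification route is correct and genuinely different from the paper's proof. The paper derives \eqref{eq:EulerHamiltonian} by starting from the two Hamiltonian forms \eqref{eq:decoupled2} of the decoupled Burgers equations for $f_\pm = u\pm\sqrt{\alpha}\rho$, inverting the linear map to write $\rho=\frac{1}{2\sqrt{\alpha}}(f_+-f_-)$, $u=\frac12(f_++f_-)$, and then pushing the variational derivatives through the chain-rule relations \eqref{eq:variationrelations1}; the operators $J$ and $K$ emerge at the end of that substitution. You instead compute the four gradients of $H^u_1$, $H^u_2$ (all of which you get right) and check directly that $J$ and $K$ reproduce the fluxes of \eqref{eq:systemEuler1}, which is a shorter path to the same identity; your closing paragraph correctly identifies the paper's route as the conceptual motivation. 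What your version buys in addition is the explicit check that $(J,K)$ is a \emph{compatible} Hamiltonian pair: you note that both are constant-coefficient, skew-adjoint first-order operators, so the Jacobi identity for each bracket and for the whole pencil $J+\lambda K$ is automatic (the Fr\'echet derivatives of the operators with respect to $(\rho,u)$ vanish). The paper does not carry out this step, so strictly speaking your argument is more complete as a verification of the Magri bi-Hamiltonian property, while the paper's is more illuminating as to where the formulas come from. Both are valid.
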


\begin{proof}
Due to $f_\pm=u\pm \sqrt{\alpha}\rho$, we have $\rho =\frac{1}{2\sqrt{\alpha}}(f_+-f_-)$ and $u=\frac{1}{2}(f_++f_-)$. From \eqref{eq:decoupled2}, we obtain
\begin{gather}\label{eq:Burgersvariation}
\left\{
\begin{split}
&\partial_t\rho+\frac{1}{\sqrt{\alpha}}\partial_x\left(\frac{\delta{H^f_1}}{\delta f_+}-\frac{\delta{H^f_1}}{\delta f_-}\right)=0,\\
&\partial_tu+\partial_x\left(\frac{\delta{H^f_1}}{\delta f_+}+\frac{\delta{H^f_1}}{\delta f_-}\right)=0,
\end{split}
\right.~~\textrm{ and }~~
\left\{
\begin{split}
&\partial_t\rho+\partial_x\left(\frac{\delta{H^f_2}}{\delta f_+}+\frac{\delta{H^f_2}}{\delta f_-}\right)=0,\\
&\partial_tu+\sqrt{\alpha}\partial_x\left(\frac{\delta{H^f_2}}{\delta f_+}-\frac{\delta{H^f_2}}{\delta f_-}\right)=0.
\end{split}
\right.
\end{gather}
Due to \eqref{eq:relation}, we have the following relations:
\begin{align}\label{eq:variationrelations1}
\frac{\delta H^{u}_j}{\delta \rho}=\sqrt{\alpha}\left(\frac{\delta H^{f}_j}{\delta f_+}-\frac{\delta H^{f}_j}{\delta f_-}\right),\quad \frac{\delta H^{u}_j}{\delta u}=\frac{\delta H^{f}_j}{\delta f_+}+\frac{\delta H^{f}_j}{\delta f_-},\quad j=1,2.
\end{align}
Put \eqref{eq:variationrelations1} into \eqref{eq:Burgersvariation} and we obtain
\begin{gather*}
\left\{
\begin{split}
&\partial_t\rho+\frac{1}{\alpha}\partial_x\left(\frac{\delta H^{u}_1}{\delta \rho}\right)=0,\\
&\partial_tu+\partial_x\left(\frac{\delta H^{u}_1}{\delta u}\right)=0,
\end{split}
\right.~\textrm{ and }~
\left\{
\begin{split}
&\partial_t\rho+\partial_x\left(\frac{\delta H^{u}_2}{\delta u}\right)=0,\\
&\partial_tu+\partial_x\left(\frac{\delta H^{u}_2}{\delta \rho}\right)=0,
\end{split}
\right.
\end{gather*}
which is \eqref{eq:EulerHamiltonian}.
\end{proof}

From Theorem \ref{thm:biHamiltonian}, we can directly obtain a bi-Hamiltonian structure for System \eqref{eq:systemEuler3}, as shown in the following corollary:
\begin{corollary}
For $\alpha\neq0$, the isentropic gas dynamics \eqref{eq:systemEuler3} can be rewritten as the following bi-Hamiltonian structure:
\begin{equation}\label{eq:EulerHamiltonian3}
\frac{\partial}{\partial t}
\begin{pmatrix}
\rho\\
m
\end{pmatrix}
=
\tilde{J}
\begin{pmatrix}
\frac{\delta H^{m}_1}{\delta \rho} \\ \frac{\delta H^{m}_1}{\delta m}
\end{pmatrix}=
\tilde{K}
\begin{pmatrix}
\frac{\delta H^{m}_2}{\delta \rho} \\ \frac{\delta H^{m}_2}{\delta m}
\end{pmatrix},
\end{equation}
where $\tilde{J}$ and $\tilde{K}$ are anti-symmetric operators given by
\begin{align}\label{eq:operators2}
\tilde{J}=&
\begin{pmatrix}
-\frac{1}{\alpha}\partial_x  & -\frac{1}{\alpha}\partial_xu\\
-\frac{1}{\alpha}u\partial_x  & -\frac{1}{\alpha}u\partial_xu-\rho\partial_x \rho
\end{pmatrix},
\quad \tilde{K}=\begin{pmatrix}
0  & -\partial_x\rho\\
-\rho\partial_x  & -u\partial_x\rho-\rho\partial_x u
\end{pmatrix},
\end{align}
and the Hamiltonians are given by
\begin{align}\label{eq:mrho}
H^m_1(\rho,m)=\int_{\mathbb{R}}\left(\frac{m^3}{6\rho^3}+\frac{\alpha}{2}m\rho\right) \di x,\quad H^m_2(\rho,m)=\int_{\mathbb{R}}\left(\frac{m^2}{2\rho}+\frac{\alpha}{6}\rho^3\right) \di x.
\end{align}
\end{corollary}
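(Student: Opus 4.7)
The plan is to derive the corollary as a change-of-variables consequence of Theorem \ref{thm:biHamiltonian}. The map $\Phi : (\rho, u) \mapsto (\rho, m)$ with $m = \rho u$ is a bijection on the positive density region, and classical solutions of \eqref{eq:systemEuler1} correspond to classical solutions of \eqref{eq:systemEuler3}. So the strategy is simply to push forward the pair $(J,K)$ and the Hamiltonians $(H_1^u, H_2^u)$ along $\Phi$ and check that the resulting objects are exactly $(\tilde J, \tilde K)$ and $(H_1^m, H_2^m)$.

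First I would record the transformation of variational derivatives. Writing $H^m(\rho, m) := H^u(\rho, m/\rho)$, the chain rule gives
\begin{equation*}
\frac{\delta H^u}{\delta u} = \rho\, \frac{\delta H^m}{\delta m}, \qquad
\frac{\delta H^u}{\delta \rho} = \frac{\delta H^m}{\delta \rho} + u\, \frac{\delta H^m}{\delta m},
\end{equation*}
i.e.\ the column of $(u,\rho)$-variational derivatives equals $(D\Phi)^T$ applied to the column of $(\rho,m)$-variational derivatives, where $D\Phi = \begin{pmatrix} 1 & 0 \\ u & \rho \end{pmatrix}$. Direct substitution with $u = m/\rho$ then verifies that $H_1^u$ and $H_2^u$ in \eqref{eq:generalAlpha} transform into the expressions $H_1^m$ and $H_2^m$ in \eqref{eq:mrho}.

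Next I would apply the same Jacobian to the time derivative: $\partial_t (\rho, m)^T = D\Phi \cdot \partial_t(\rho, u)^T$. Combining with \eqref{eq:EulerHamiltonian} and the relation above yields
\begin{equation*}
\partial_t \begin{pmatrix} \rho \\ m \end{pmatrix} = D\Phi\, J\, (D\Phi)^T \begin{pmatrix} \delta_\rho H_1^m \\ \delta_m H_1^m \end{pmatrix} = D\Phi\, K\, (D\Phi)^T \begin{pmatrix} \delta_\rho H_2^m \\ \delta_m H_2^m \end{pmatrix}.
\end{equation*}
Thus it only remains to compute the two conjugated operators and check they agree with $\tilde J$ and $\tilde K$ in \eqref{eq:operators2}. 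Writing out $D\Phi\, J\, (D\Phi)^T$ entry by entry, one finds the $(1,1)$-entry $-\tfrac{1}{\alpha}\partial_x$, the $(1,2)$-entry $-\tfrac{1}{\alpha}\partial_x u$, the $(2,1)$-entry $-\tfrac{1}{\alpha} u\partial_x$, and the $(2,2)$-entry $-\tfrac{1}{\alpha} u\partial_x u - \rho\partial_x \rho$; the analogous calculation for $K$ yields exactly $\tilde K$.

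The main obstacle, and the place where care is needed, is bookkeeping for compositions of multiplication operators with $\partial_x$, since the symbols like $\partial_x u$ and $u\partial_x$ denote operators acting on test functions rather than pointwise products; antisymmetry (which can be checked directly or inherited from the fact that conjugation by $D\Phi$ preserves skew-adjointness) gives a convenient consistency check on the computation. Compatibility of the pencil $\tilde J + \lambda \tilde K$ and the Jacobi identity are likewise automatic, since these properties are invariant under the coordinate change $\Phi$.
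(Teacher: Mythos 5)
Your proposal is correct and follows essentially the same argument as the paper: the matrices $\begin{pmatrix}1 & 0\\u & \rho\end{pmatrix}$ and $\begin{pmatrix}1 & u\\0 & \rho\end{pmatrix}$ the paper uses to conjugate $J$ and $K$ are exactly your $D\Phi$ and $(D\Phi)^T$, and the paper carries out the same chain rule for variational derivatives and the same matrix multiplications. Your framing in terms of a pushforward along $\Phi$ is a cleaner way to phrase the identical computation, including the observation that skew-adjointness is automatically preserved.
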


\begin{proof}
Due to $m=\rho u$, we have
\[
H^{m}_j(\rho,m):=H^{u}_j(\rho,u),~~j=1,2.
\]
Moreover, we have the following relations:
\[
\frac{\delta {H}^{u}_j}{\delta \rho}=\frac{\delta H^{m}_j}{\delta \rho}+u\frac{\delta H^{m}_j}{\delta m},\quad \frac{\delta {H}^{u}_j}{\delta u}=\rho \frac{\delta H^{m}_j}{\delta m},
\]
or equivalently
\begin{equation}\label{eq:variationrelations3}
\begin{pmatrix}
\frac{\delta {H}^{u}_j}{\delta \rho} \\ \frac{\delta {H}^{u}_j}{\delta u}
\end{pmatrix}
=
\begin{pmatrix}
1  & u\\
0  & \rho
\end{pmatrix}
\begin{pmatrix}
\frac{\delta H^{m}_j}{\delta \rho} \\ \frac{\delta H^{m}_j}{\delta m}
\end{pmatrix},~~j=1,2.
\end{equation}
Combining \eqref{eq:EulerHamiltonian} and \eqref{eq:variationrelations3},  we obtain
\begin{align}\label{eq:mEulerHamiltonian1}
\frac{\partial}{\partial t}
\begin{pmatrix}
\rho\\
m
\end{pmatrix}
=&
\begin{pmatrix}
1  & 0\\
u  & \rho
\end{pmatrix}
\begin{pmatrix}
\partial_t\rho\\
\partial_t u
\end{pmatrix}
=\begin{pmatrix}
1 & 0\\
u & \rho 
\end{pmatrix}
J
\begin{pmatrix}
1  & u\\
0  & \rho
\end{pmatrix}
\begin{pmatrix}
\frac{\delta H^{m}_1}{\delta \rho} \\ \frac{\delta H^{m}_1}{\delta m}
\end{pmatrix}\nonumber\\
&=\begin{pmatrix}
1  & 0\\
u  & \rho
\end{pmatrix}
K
\begin{pmatrix}
1  & u\\
0  & \rho
\end{pmatrix}
\begin{pmatrix}
\frac{\delta H^{m}_2}{\delta \rho} \\ \frac{\delta H^{m}_2}{\delta m}
\end{pmatrix}.
\end{align}
Hence, we have
\begin{align*}
\tilde{J}=\begin{pmatrix}
1 & 0\\
u & \rho 
\end{pmatrix}
J
\begin{pmatrix}
1  & u\\
0  & \rho
\end{pmatrix}
=
\begin{pmatrix}
-\frac{1}{\alpha}\partial_x  & -\frac{1}{\alpha}\partial_xu\\
-\frac{1}{\alpha}u\partial_x  & -\frac{1}{\alpha}u\partial_xu-\rho\partial_x \rho
\end{pmatrix},
\end{align*}
and
\begin{align*}
\tilde{K}=\begin{pmatrix}
1  & 0\\
u  & \rho
\end{pmatrix}
K
\begin{pmatrix}
1  & u\\
0  & \rho
\end{pmatrix}
=
\begin{pmatrix}
0  & -\partial_x\rho \\
-\rho\partial_x & -u\partial_x\rho-\rho\partial_x u
\end{pmatrix}.
\end{align*}
Hence, we obtain a bi-Hamiltonian structure for System \eqref{eq:systemEuler3}.
\end{proof}

Notice that $H_2^m$ is nothing but the total energy of System \eqref{eq:systemEuler3}, which is given by
\begin{align}\label{eq:Hamilton1}
H^m_2(\rho,m)=\int_{\mathbb{R}}E(x,t)\di x=\int_{\mathbb{R}}\left(\frac{1}{2}\rho u^2+\frac{\alpha }{6}\rho^3\right)\di x=\int_{\mathbb{R}}\left(\frac{m^2}{2\rho}+\frac{\alpha \rho^3}{6}\right)\di x.
\end{align}
where $E(x,t)$ is defined by \eqref{eq:energyflux}.

\begin{remark}[A bi-Hamiltonian structure for p-system \eqref{eq:gas}]
Set
\[
\eta(\xi,t):=\frac{1}{\tau(\xi,t)},\quad \xi\in(0,1),~~t>0.
\]
Then, the p-system \eqref{eq:gas} becomes the following system for $(\eta, ~V):$
\begin{gather}\label{eq:systemLagrange}
\left\{
\begin{split}
&\partial_\xi\eta=-\eta^2\partial_\xi V,\\
&\partial_tV=-\alpha\eta^2\partial_\xi\eta.
\end{split}
\right.
\end{gather}
We have the following bi-Hamiltonian structure for System \eqref{eq:systemLagrange}:
\begin{equation*}
\frac{\partial}{\partial t}
\begin{pmatrix}
\eta\\
V
\end{pmatrix}
=
\begin{pmatrix}
-\frac{3}{4\alpha}\eta\partial_\xi\eta  & -\frac{1}{4\alpha}\eta\partial_\xi V+\frac{3}{2\alpha}V\partial_\xi \eta-\frac{1}{\alpha}\partial_\xi \eta V\\
-\frac{1}{4\alpha}V\partial_\xi \eta +\frac{3}{2\alpha}\eta \partial_\xi V-\frac{1}{\alpha}\eta V\partial_\xi  & \frac{1}{4\alpha}V\partial_\xi V+\frac{1}{\alpha}\eta \partial_\xi\eta
\end{pmatrix}
\begin{pmatrix}
\frac{\delta H^\eta_1}{\delta \eta} \\ \frac{\delta H^\eta_1}{\delta V}
\end{pmatrix},
\end{equation*}
and
\begin{equation*}
\frac{\partial}{\partial t}
\begin{pmatrix}
\eta\\
V
\end{pmatrix}
=
\begin{pmatrix}
0  & -\eta^2\partial_\xi\\
-\partial_\xi \eta^2 & 0
\end{pmatrix}
\begin{pmatrix}
\frac{\delta H^\eta_2}{\delta \eta} \\ \frac{\delta H^\eta_2}{\delta V}
\end{pmatrix},
\end{equation*}
where
\begin{align*}
H^\eta_1(\eta,V)=\int_{\mathbb{R}}\left(\frac{V^3}{6\eta}+\alpha\frac{\eta V}{2}\right)\di \xi,\quad H^\eta_2(\eta,V)=\int_{\mathbb{R}}\left(\frac{V^2}{2}+\alpha\frac{\eta^2}{6}\right)\di \xi.
\end{align*}
\end{remark}

\section{Kinetic formulations and entropy solutions for the coupled Burgers system \eqref{eq:systemEuler1} with $\alpha>0$}\label{sec:kinetic}
In this section, we study the kinetic formulation for the coupled Burgers system \eqref{eq:systemEuler1} with $\alpha>0$. In contrast, Lions, Perthame and Tadmor \cite{Lions1994Kinetic}  studied System \eqref{eq:systemEuler3} and they used the kinetic formulation to obtain global entropy solutions without uniqueness.
Here, we show the existence and uniqueness of global  entropy solutions for  \eqref{eq:systemEuler1}.

\subsection{Kinetic formulations}
 Kinetic formulation is a method which use the distribution function $\kappa(v,x,t)$ at time $t$ in the phase plane  for velocity $v$ and the position $x$  to study the continuum equation for $u(x,t)$ (and $\rho(x,t)$).  At fixed continnum variable $(x,t)$, $u$ and $\rho$ are some $v$-moments of $\kappa$. In the local thermal equilibrium the distribution function $\kappa(v,x,t)$ can be described by $v$-equilibrium distribution $\chi(v; u, \rho)$ with parameters  $u$ and $\rho$, i.e. $\kappa(v,x,t)=\chi(v; u(x,t), \rho(x,t))$. In kinetic theory, the $v$-equilibrium distribution is also known as Maxwellian. Following the idea of the celebrated work by Lions, Perthame and Tadmor \cite{Lions1994Kinetic}, we use the combinations of Heaviside function,
\begin{gather*}
H(v)=\left\{
\begin{split}
1,\quad v\geq0,\\
0,\quad v<0,
\end{split}
\right.
\end{gather*}
 to construct the equilibrium distribution. 
Let $(\rho,u)$ be a solution  to the coupled Burgers system \eqref{eq:systemEuler1} with $\alpha>0$. Recall \eqref{fpm}
\begin{align*}
f_\pm = u \pm \sqrt{\alpha}\rho.
\end{align*}
Then, $f_\pm$ are solutions to the decoupled Burgers equations \eqref{eq:decouple1} and \eqref{eq:decouple2}. 
We use the following   $v$-equilibrium distributions
\[
\chi_+(v;\rho,u):=H(v)-H(v-f_+),\quad \chi_-(v;\rho,u):=H(v)-H(v-f_-),
\]
\begin{equation}\label{eq:kineticf}
\chi(v;\rho,u):=\frac{1}{2\sqrt{\alpha}}(\chi_+ - \chi_-)=\frac{1}{2\sqrt{\alpha}}[H(v-f_-)-H(v-f_+)],
\end{equation}
and
\[
\hat{\chi}(v;\rho,u):=\frac{1}{2}(\chi_+ + \chi_-)=\frac{1}{2}[2H(v)-H(v-f_-)-H(v-f_+)].
\]
For any nonnegative integer $k$, direct calculations show that the following $k$-moments equality holds
\begin{align}\label{eq:kineticrepDB}
\int_{\mathbb{R}}v^k\chi_\pm (v;\rho,u) \di v = \frac{f^{k+1}_\pm}{k+1}.
\end{align}
Hence, the conserved quantities given  by \eqref{eq:conservedq} correspond to the integration (w.r.t. $x$ variable) of the following kinetic formulations:
\begin{align}\label{eq:Hamiltonaninre}
\lambda_1\int_{\mathbb{R}}v^{k_1}\chi_+ (v;\rho,u) \di v + \lambda_2\int_{\mathbb{R}}v^{k_2}\chi_- (v;\rho,u) \di v,\quad \lambda_i\in\mathbb{C},~k_i\in \mathbb{N}_+,~~i=1,2.
\end{align}
Choosing $\lambda_1=\lambda_2=\frac{1}{4}$ and $k_1=k_2=2$, we obtain the Hamiltonian $H_1^u$ and choosing  $\lambda_1=-\lambda_2=\frac{1}{4\sqrt{\alpha}}$ and $k_1=k_2=2$, we obtain the Hamiltonian $H_2^u$ given by \eqref{eq:generalAlpha}. More precisely, we have
\begin{align}\label{eq:Hamiltonianu12}
H_1^u(\rho,u)=\frac{1}{2}\int_{\mathbb{R}}v^2\hat{\chi} (v;\rho,u) \di v,\quad H_2^u(\rho,u)=\frac{1}{2}\int_{\mathbb{R}}v^2\chi (v;\rho,u) \di v.
\end{align}
By \eqref{eq:kineticrepDB}, the decoupled Burgers equations \eqref{eq:decouple1} and \eqref{eq:decouple2} have the following kinetic formulations:
\[
\int_{\mathbb{R}}(\partial_t\chi_\pm + v\partial_x\chi_\pm)\di v=\partial_tf_\pm  + f_\pm \partial_x f_\pm = 0.
\]
Besides, we also have
\[
u=\int_{\mathbb{R}}\hat{\chi}(v;\rho,u)\di v,\quad \rho=\int_{\mathbb{R}}\chi(v;\rho,u)\di v.
\]
Hence, the coupled Burgers system \eqref{eq:systemEuler1} has the following kinetic formulation:
\begin{equation}\label{eq:kineticcoupled}
\int_{\mathbb{R}} \begin{pmatrix}
\partial_t{\chi}+v\partial_x{\chi}\\
\partial_t\hat{\chi}+v\partial_x\hat{\chi}
\end{pmatrix}\di v
=
\begin{pmatrix}
\partial_t\rho+\partial_x(\rho u) \\
\partial_tu+\partial_x\left(\frac{u^2+\alpha \rho^2}{2}\right)
\end{pmatrix}
=\begin{pmatrix}
0\\
0
\end{pmatrix}.
\end{equation}
Moreover, direct calculations show that
\begin{equation}\label{eq:kineticrepSE}
\begin{pmatrix}
\rho u\\
E
\end{pmatrix}
=
\int_{\mathbb{R}}
\begin{pmatrix}
v \\ \frac{v^2}{2}
\end{pmatrix} \chi(v;\rho, u)\di v,
\end{equation}
where $E$ is the total energy given by \eqref{eq:energyflux}. Comparing with \eqref{eq:kineticcoupled}, we have the following kinetic formulation for the isentropic gas system \eqref{eq:systemEuler3}:
\begin{equation*}
\int_{\mathbb{R}} \begin{pmatrix}
1\\
v
\end{pmatrix}(\partial_t\chi+v\partial_x\chi)\di v
=
\begin{pmatrix}
\partial_t\rho+\partial_x(\rho u) \\
\partial_t(\rho u)+\partial_x(\rho u^2+ p)
\end{pmatrix}
=\begin{pmatrix}
0\\
0
\end{pmatrix}.
\end{equation*}

\subsection{Existence and uniqueness of entropy solutions for the coupled Burgers system \eqref{eq:systemEuler1}}\label{sec:entropy}
The notion of the entropy-entropy-flux pair refers to the pair of regular functions $(\eta,q)$ defined on the space of the states $(\rho,u)$ for which every classical solution $(\rho,u)$ of the coupled Burgers system \eqref{eq:systemEuler1},  also satisfies
\begin{align}\label{eq:entropyequality}
\partial_t\eta(\rho,u)+\partial_x q(\rho,u)=0.
\end{align}
Combining the coupled Burgers system \eqref{eq:systemEuler1} and \eqref{eq:entropyequality} gives
\[
(\partial_\rho q - u \partial_\rho\eta - \alpha \rho \partial_u\eta)\partial_x\rho+(\partial_uq - \rho \partial_\rho\eta - u\partial_u\eta)\partial_xu=0,
\]
which holds for any smooth solutions $(\rho ,u)$. The entropy pair $(\eta, q)$ can be found by solving the following Euler-Poisson-Darboux equations.  First, given $\psi(u), g(u)$, 
   we solve $\eta(\rho, u)$ satisfying
\begin{gather}\label{eq:waveequation}
\left\{
\begin{split}
&\partial_{\rho\rho}\eta-\alpha \partial_{uu}\eta=0,\\
&\eta(0,u)=\psi(u),~~\partial_{\rho}\eta(0,u)=g(u).
\end{split}
\right.
\end{gather}
Then we solve the  entropy flux $q$  by
\begin{align}\label{eq:entropyflux}
\partial_uq = \rho \partial_\rho\eta + u\partial_u\eta,\quad \partial_\rho q = u \partial_\rho\eta + \alpha \rho \partial_u\eta.
\end{align}
From \eqref{eq:waveequation}, we know $\partial_{u\rho}q=\partial_{\rho u}q$ so \eqref{eq:entropyflux} is solvable.
We have the following results:
\begin{proposition}\label{pro:entropy1}
For two given functions $\psi,g\in C^2(\mathbb{R})$, let $(\eta(0,u),\partial_\rho\eta(0,u))=(\psi(u),g(u))$ be the initial datum for \eqref{eq:waveequation}. Then:

$\mathrm{(i)}$ The  solution $\eta(\rho,u)$ to  \eqref{eq:waveequation}  can be recast in a kinetic representation:
\begin{align}\label{eq:dAlembertkinetic}
\eta(\rho,u)&=\int_{\mathbb{R}}\psi'(v)\hat{\chi}(v;\rho,u)\di v+\int_{\mathbb{R}}g(v){\chi}(v;\rho,u)\di v
\end{align}

$\mathrm{(ii)}$ When $\psi=0$ and $\rho\geq0$, we have kinetic representations 
\begin{align}\label{eq:entropyfluxg}
\eta_g(\rho, u):=\int_{\mathbb{R}}g(v){\chi}(v;\rho,u)\di v , \quad q_g(\rho,u):=\int_{\mathbb{R}} vg(v)\chi(v;\rho,u)\di v.
\end{align}
Moreover, $\eta_g$
 is convex with respect to $(\rho,m)$ if and only if $g(v)$ is convex, where $m=\rho u$.

$\mathrm{(iii)}$ When $g=0$, 
we have kinetic representations 
\begin{align}\label{eq:kineticflux}
\eta_\psi(\rho,u) := \int_{\mathbb{R}}\psi'(v)\hat{\chi}(v;\rho,u)\di v\\
q_\psi(\rho,u):=\int_{\mathbb{R}}v\psi'(v)\hat{\chi}(v;\rho,u)\di v=\frac{\phi(u+\sqrt{\alpha}\rho)+\phi(u-\sqrt{\alpha}\rho)}{2},
\end{align}
where $\phi'(v)=v\psi'(v)~\textrm{ for }~v\in\mathbb{R}.$ 
Moreover,
$\eta_\psi$  is convex with respect to $(\rho,u)$ if and only if $\psi$ is a convex function. 
\end{proposition}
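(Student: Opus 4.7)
The plan is to treat \eqref{eq:waveequation} as a one-dimensional wave equation with propagation speed $\sqrt{\alpha}$, viewing $\rho$ as time-like and $u$ as space-like, which is legitimate because $\alpha>0$ in this section. For (i), I would apply d'Alembert's formula to obtain
\begin{equation*}
\eta(\rho,u)=\frac{1}{2}[\psi(u+\sqrt{\alpha}\rho)+\psi(u-\sqrt{\alpha}\rho)]+\frac{1}{2\sqrt{\alpha}}\int_{u-\sqrt{\alpha}\rho}^{u+\sqrt{\alpha}\rho}g(v)\di v,
\end{equation*}
and then verify the kinetic representation \eqref{eq:dAlembertkinetic} by computing the two moment integrals separately. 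Since $\chi(v;\rho,u)=\frac{1}{2\sqrt{\alpha}}$ on $[f_-,f_+]$ and vanishes elsewhere, the $g$-term matches the d'Alembert integral exactly. For the $\psi'$-term, integration against $\hat{\chi}$ produces $\frac{1}{2}[\psi(f_+)+\psi(f_-)]$ up to an additive constant $\psi(0)$, which is immaterial since constants lie in the kernel of the wave operator.

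For parts (ii) and (iii), setting $\psi\equiv 0$ or $g\equiv 0$ in (i) immediately yields the claimed representations of $\eta_g$ and $\eta_\psi$. To verify the companion fluxes $q_g$ and $q_\psi$, I would substitute the closed forms
\begin{equation*}
\eta_g=\frac{1}{2\sqrt{\alpha}}[G(f_+)-G(f_-)],\qquad \eta_\psi=\frac{1}{2}[\psi(f_+)+\psi(f_-)]-\psi(0),\quad G'=g,
\end{equation*}
into the compatibility system \eqref{eq:entropyflux} and check that the candidates in \eqref{eq:entropyfluxg} and \eqref{eq:kineticflux} satisfy the resulting equations, using $\partial_\rho f_\pm=\pm\sqrt{\alpha}$ and $\partial_u f_\pm=1$; this reduces to routine algebra.

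For (iii), the convexity claim is handled by a direct Hessian computation giving
\begin{equation*}
\mathrm{Hess}_{(\rho,u)}\eta_\psi=\frac{\psi''(f_+)}{2}\begin{pmatrix}\sqrt{\alpha}\\1\end{pmatrix}\begin{pmatrix}\sqrt{\alpha}&1\end{pmatrix}+\frac{\psi''(f_-)}{2}\begin{pmatrix}\sqrt{\alpha}\\-1\end{pmatrix}\begin{pmatrix}\sqrt{\alpha}&-1\end{pmatrix},
\end{equation*}
a sum of rank-one positive semidefinite matrices; the ``if'' direction follows from $\psi''\geq 0$, and the ``only if'' direction follows by evaluating at $\rho=0$, where the Hessian collapses to $\psi''(u)\,\mathrm{diag}(\alpha,1)$.

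For (ii), convexity in $(\rho,m)$ with $m=\rho u$ is subtler because the change of variables is nonlinear. The plan is to express the Hessian as an integral against $g''$ weighted by a matrix $M(v)$ that is positive semidefinite on the characteristic interval. A careful direct computation yields
\begin{equation*}
\mathrm{Hess}_{(\rho,m)}\eta_g=\frac{1}{2\sqrt{\alpha}\rho^2}\int_{f_-}^{f_+}M(v)\,g''(v)\di v,
\end{equation*}
with
\begin{equation*}
M(v)=\begin{pmatrix}(f_+^2+f_+f_-+f_-^2)-v(f_++f_-)&v-(f_++f_-)\\v-(f_++f_-)&1\end{pmatrix},
\end{equation*}
and one checks that $M_{11}(v)\geq 0$ and $\det M(v)=-(v-f_+)(v-f_-)\geq 0$ for $v\in[f_-,f_+]$, so $M(v)\succeq 0$ on this interval. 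Hence $g''\geq 0$ gives $\mathrm{Hess}\,\eta_g\succeq 0$; conversely, if $g''(v_0)<0$, choosing a state with $[f_-,f_+]$ a small neighborhood of $v_0$ forces $\partial_{mm}\eta_g=\frac{1}{2\sqrt{\alpha}\rho^2}\int_{f_-}^{f_+}g''\,\di v<0$. Identifying this integral identity with the correct PSD kernel $M(v)$ is the main technical obstacle.
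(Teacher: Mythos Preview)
Your proposal is correct and, for parts (i), (iii), and the flux verification in (ii), essentially identical to the paper's argument (including your careful remark about the harmless constant $\psi(0)$, which the paper suppresses). The one genuine deviation is the convexity proof in (ii): the paper reparametrizes via $v=\tfrac{m}{\rho}+\xi\rho$ with $\xi\in[-\sqrt{\alpha},\sqrt{\alpha}]$, computes the three Hessian entries as $\xi$-integrals (with $\partial_{\rho\rho}\eta_g$ splitting into a $g''$-integral plus an extra term $\tfrac{1}{\sqrt{\alpha}}\int_0^{\sqrt{\alpha}}\xi[g'(u+\xi\rho)-g'(u-\xi\rho)]\,\mathrm{d}\xi$ handled by monotonicity of $g'$), and then bounds $(\partial_{\rho m}\eta_g)^2$ by $\partial_{\rho\rho}\eta_g\cdot\partial_{mm}\eta_g$ via H\"older's inequality. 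Your rewriting of the entire Hessian as $\tfrac{1}{2\sqrt{\alpha}\rho^2}\int_{f_-}^{f_+}M(v)g''(v)\,\mathrm{d}v$ with $\det M(v)=-(v-f_+)(v-f_-)\ge 0$ on $[f_-,f_+]$ is a correct integration-by-parts repackaging of the same computation; it buys a clean pointwise positive-semidefiniteness check at the cost of having to identify the correct $M_{11}$, whereas the paper's split-plus-H\"older route avoids that algebra but needs the separate monotonicity observation for the $g'$ term. Both converse arguments coincide: non-convex $g\in C^2$ forces $g''<0$ on an interval, hence $\partial_{mm}\eta_g<0$ for a suitably localized state.
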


\begin{proof}
(i) By the d'Alembert's formula, we have
\begin{align}\label{eq:dAlembert}
\eta(\rho,u)=\frac{\psi(f_+)+\psi(f_-)}{2}+\frac{1}{2\sqrt{\alpha}}\int_{f_-}^{f_+}g(v)\di v,
\end{align}
where $f_\pm(\rho,u)=u\pm\sqrt{\alpha}\rho$. Formula \eqref{eq:dAlembertkinetic}  is exactly the kinetic formulation for the formula \eqref{eq:dAlembert}.

(ii)  
First we verify \eqref{eq:entropyfluxg} satisfies \eqref{eq:entropyflux}. We have
\[
\partial_u\eta_g = g(u+\sqrt{\alpha}\rho)-g(u-\sqrt{\alpha}\rho),\quad \partial_\rho\eta_g = \sqrt{\alpha}\Big(g(u+\sqrt{\alpha}\rho)+g(u-\sqrt{\alpha}\rho)\Big),
\]
and
\begin{align*}
\partial_uq_g=&(u+\sqrt{\alpha}\rho)g(u+\sqrt{\alpha}\rho)-(u-\sqrt{\alpha}\rho)g(u-\sqrt{\alpha}\rho)\\
=&u\partial_u\eta_g+\rho\partial_\rho\eta_g.
\end{align*}
Similarly, we also have $\partial_\rho q_\psi=u\partial_\rho\eta_\psi+\alpha\rho\partial_u\eta_\psi$.

Second, we check the  convexity condition for $\eta_g$ in terms of $(\rho, m)$, where $m=\rho u$.
By changing of variables $v=u\pm \xi \rho$, we have
\begin{align}\label{eq:entropyg}
\eta_g(\rho,u)=&\int_{\mathbb{R}}g(v){\chi}(v;\rho,u)\di v=\frac{1}{2\sqrt{\alpha}}\int_{f_-}^{f_+}g(v)\di v\nonumber\\
=&\frac{1}{2\sqrt{\alpha}}\int_{-\sqrt{\alpha}}^{\sqrt{\alpha}}\rho g\left(\frac{m}{\rho}+\xi\rho\right)\di \xi.
\end{align}
Taking derivatives of \eqref{eq:entropyg}, we can obtain
\[
\partial_{\rho\rho}\eta_g=\frac{1}{2\sqrt{\alpha}}\int_{-\sqrt{\alpha}}^{\sqrt{\alpha}}\rho g''\left(\frac{m}{\rho}+\xi\rho\right)\left(-\frac{m}{\rho^2}+\xi\right)^2 \di \xi+\frac{1}{\sqrt{\alpha}}\int_{0}^{\sqrt{\alpha}}\xi\Big[ g'(u+\xi\rho)-g'(u-\xi\rho)\Big] \di \xi.
\]
When $g$ is convex, $g'$ is increasing and $g''>0$. Hence $\partial_{\rho\rho}\eta_g\geq0$. Moreover, we have
\[
\partial_{\rho m}\eta_g=\frac{1}{2\sqrt{\alpha}}\int_{-\sqrt{\alpha}}^{\sqrt{\alpha}}g''\left(\frac{m}{\rho}+\xi\rho\right)\left(-\frac{m}{\rho^2}+\xi\right) \di \xi,
\]
and
\[
\partial_{mm}\eta_g=\frac{1}{2\sqrt{\alpha}}\int_{-\sqrt{\alpha}}^{\sqrt{\alpha}}\frac{1}{\rho}g''\left(\frac{m}{\rho}+\xi\rho\right)\di \xi\geq0.
\]
By H\"older's inequality, we can obtain
\[
\partial_{\rho\rho}\eta_g\cdot \partial_{mm}\eta_g-(\partial_{\rho m}\eta_g)^2\geq 0.
\]
Hence, $\eta_g(\rho,u)$ is convex about $(\rho,m)$. When $g$ is not convex, we have $g''<0$ in some interval. This implies $\partial_{mm}\eta_g<0$. Hence, $\eta_g$ is not convex. This proves that $\eta_g$ is convex if and only if $g$ is convex.

(iii) First, we verify equalities in \eqref{eq:entropyflux} hold for $(\eta_\psi,q_\psi)$.
For $f_\pm=u\pm\sqrt{\alpha}\rho$, we have
\begin{align*}
\partial_uq_\psi=\frac{f_+\psi'(f_+)+f_-\psi'(f_-)}{2}=&u\frac{\psi'(f_+)+\psi'(f_-)}{2}+\rho\frac{\sqrt{\alpha}\psi'(f_+)-\sqrt{\alpha}\psi'(f_-)}{2}\\
=&u\partial_u\eta_\psi+\rho\partial_\rho\eta_\psi.
\end{align*}
Similarly, $\partial_\rho q_\psi=u\partial_\rho\eta_\psi+\alpha\rho\partial_u\eta_\psi.$
Hence, equalities in \eqref{eq:entropyflux} hold for $(\eta_\psi,q_\psi)$. This proves that $q_\psi$ is the corresponding entropy flux of $\eta_\psi$.

Second, we check the  convexity condition for $\eta_{\psi}$ in terms of $(\rho, u)$.
Notice that
\begin{align}\label{eq:dAlembert1}
\eta_\psi(\rho,u)=\int_{\mathbb{R}}\psi'(v)\hat{\chi}(v;\rho,u)\di v=\frac{\psi(f_+)+\psi(f_-)}{2},
\end{align}
where $f_\pm=u\pm\sqrt{\alpha}\rho$.
Taking derivative of \eqref{eq:dAlembert1}, we can obtain
\[
\partial_{\rho u}\eta_\psi=\frac{\sqrt{\alpha}(\psi''(f_+)-\psi''(f_-))}{2},\quad \partial_{\rho \rho}\eta_\psi=\frac{{\alpha}(\psi''(f_+)+\psi''(f_-))}{2},
\]
and
\[
\partial_{u u}\eta_\psi=\frac{\psi''(f_+)+\psi''(f_-)}{2}.
\]
When $\psi$ is convex, we have
$$\partial_{u u}\eta_\psi\geq0,\partial_{u u}\eta_\psi\geq0,~\textrm{ and }~\partial_{\rho \rho}\eta_\psi\partial_{u u}\eta_\psi\geq (\partial_{\rho u}\eta_\psi)^2,$$
which means $\eta_\psi$ is convex with respect to $(\rho,u)$. Conversely, if $\eta_\psi$ is convex with respect to $(\rho,u)$, $\psi(u)=\eta_\psi(0,u)$ is convex.

\end{proof}

In \cite{Lions1994Kinetic}, Lions, Perthame and Tadmor studied the kinetic formulation of the isentropic gas system \eqref{eq:systemEuler3}. The convex entropies they used to define solutions corresponds to $\eta_g(\rho,u)$ given by \eqref{eq:dAlembertkinetic} for convex functions $g$. The corresponding entropy flux are given by \eqref{eq:entropyfluxg}. Recall their definition of the entropy solutions to the isentropic gas system  \eqref{eq:systemEuler3}  (see \cite[Definition 2]{Lions1994Kinetic}).
\begin{definition}\label{def:entropysolution}
A couple $(\rho,m)$ is called an entropy solution of \eqref{eq:systemEuler3} if it satisfies
\begin{align}\label{eq:entropyIneq1}
\partial_t\eta_g(\rho,u)+\partial_xq_g(\rho,u)\leq0,
\end{align}
in distribution sense for all convex entropies $\eta_g$ given by \eqref{eq:dAlembertkinetic} with convex $g$.
\end{definition}
An important example for  is taking $g(v)=\frac{v^2}{2}$ in \eqref{eq:entropyg}.
Direct calculations show that
the entropy has the following kinetic formulation
\begin{equation*}
\eta_g=\frac{1}{2} \rho u^2 + \frac{\alpha }{6}\rho^3=: E
\end{equation*}
 and
 the entropic flux is 
\[
q_g=\int_{\mathbb{R}} vg(v) \chi(v;\rho,u)\di v=\frac{1}{4\sqrt{\alpha}}\int_{f_-}^{f_+}v^3\di v=\frac{1}{16\sqrt{\alpha}}(f_+^4-f_-^4)=u(E+p).
\]
Then  \eqref{eq:entropyIneq1} in  Definition \ref{def:entropysolution} becomes
\begin{align}\label{eq:energyentropy}
\partial_tE+\partial_x[(E+p)u]\leq 0
\end{align}
in the distributional sense.
Notice that $g(v)=\frac{v^2}{2}$ is convex and hence $E$ is convex with respect to $(\rho,m)$.

\begin{remark}
Note that global existence of entropy solutions to System \eqref{eq:systemEuler3} was proved \cite{Lions1994Kinetic}.
It is shown in \cite{Lions1994Kinetic} that $(\rho,m)$ is a weak entropy solution with respect to the family $\{\eta_g \}$, if and only if the kinetic function $\chi(v;\rho,u)$ given by \eqref{eq:kineticf} is a weak solution of the kinetic equation
\[
\partial_t\chi+v\partial_x\chi=-\partial_{vv}\mu,
\]
for some finite Radon measure $\mu\in\mathcal{M}_+.$ Hence, the entropy inequality \eqref{eq:entropyIneq1} has a kinetic formulation:
\[
\partial_t\eta_g(\rho,u)+\partial_xq_g(\rho,u)=\int_{\mathbb{R}} g(v)(\partial_t\chi+v\partial_x\chi)\di v= -\int_{\mathbb{R}} g''(\nu) \di \mu\leq0
\]
in the  distributional sense for all $g\in C_0^2(\mathbb{R})$ and $ g''\geq 0$ on the support of $\mu$.
\end{remark}

\subsubsection{Existence and uniqueness of entropy solutions of \eqref{eq:systemEuler1}}
Next, to obtain the uniqueness of entropy solutions, we consider the entropy solutions of the coupled Burgers system \eqref{eq:systemEuler1}.  We have the following proposition for entropy pairs $(\eta, q)$:
\begin{proposition}\label{pro:entropy2}
Let  $\psi_1,\psi_2\in C^2(\mathbb{R})$ be two convex functions.  Define
\begin{align}\label{eq:entroppm}
\eta(\rho,u):&=k_1\int_{\mathbb{R}}\psi'_1(v)\chi_+(v;\rho,u)\di v+k_2\int_{\mathbb{R}}\psi'_2(v)\chi_-(v;\rho,u)\di v\nonumber\\
&=k_1\psi_1(u+\sqrt{\alpha}\rho)+k_2\psi_2(u-\sqrt{\alpha}\rho),
\end{align}
and
\begin{align}\label{eq:entropfluxpm}
q(\rho,u):&=k_1\int_{\mathbb{R}}v\psi'_1(v)\chi_+(v;\rho,u)\di v+k_2\int_{\mathbb{R}}v\psi'_2(v)\chi_-(v;\rho,u)\di v\nonumber\\
&=k_1\phi_1(u+\sqrt{\alpha}\rho)+k_2\phi_2(u-\sqrt{\alpha}\rho),
\end{align}
where $k_1$ and $k_2$ are two nonnegative real numbers and $\phi$ satisfies $\phi'_i(v)=v\psi'_i(v)$ for $i=1,2$ and $v\in\mathbb{R}$.
Then, $\eta(\rho,u)$ are convex entropies with respect to $(\rho,u)$. Moreover, $q(\rho,u)$ is the corresponding entropy flux of $\eta(\rho,u)$.
\end{proposition}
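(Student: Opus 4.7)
The plan is to recognize that Proposition \ref{pro:entropy2} is the two-parameter asymmetric version of Proposition \ref{pro:entropy1}(iii): instead of the symmetric combination $\tfrac{1}{2}[\psi(f_+)+\psi(f_-)]$, one allows independent convex functions $\psi_1,\psi_2$ on the two Riemann invariants $f_\pm = u\pm\sqrt{\alpha}\rho$, weighted by $k_1,k_2 \geq 0$. The argument proceeds in three short, routine steps.

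First, I would justify the closed-form expressions in \eqref{eq:entroppm}--\eqref{eq:entropfluxpm} by evaluating the kinetic integrals directly: from $\chi_\pm(v;\rho,u) = H(v) - H(v-f_\pm)$ one obtains $\int_{\mathbb{R}} \psi_i'(v) \chi_\pm(v;\rho,u)\,\di v = \psi_i(f_\pm) - \psi_i(0)$, and similarly $\int_{\mathbb{R}} v\psi_i'(v)\chi_\pm\,\di v = \phi_i(f_\pm) - \phi_i(0)$ using $\phi_i'(v) = v\psi_i'(v)$. The additive constants are harmless because entropies are defined modulo constants and do not affect the relation \eqref{eq:entropyequality}.

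Second, I would verify that $q$ is the entropy flux of $\eta$ by checking the pair \eqref{eq:entropyflux}. Differentiating $\eta = k_1\psi_1(f_+) + k_2\psi_2(f_-)$ and $q = k_1\phi_1(f_+) + k_2\phi_2(f_-)$ via the chain rule on $f_\pm = u\pm\sqrt{\alpha}\rho$ and then regrouping $\phi_i'(f_\pm) = f_\pm \psi_i'(f_\pm) = (u\pm\sqrt{\alpha}\rho)\psi_i'(f_\pm)$, one obtains $\partial_u q = u\partial_u\eta + \rho\partial_\rho\eta$ and $\partial_\rho q = u\partial_\rho\eta + \alpha\rho\partial_u\eta$. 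This mirrors the computation in part (iii) of Proposition \ref{pro:entropy1}, carried out independently on each Riemann invariant with its own weight.

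Third, for convexity in $(\rho,u)$, I would compute the three second derivatives
\[
\partial_{uu}\eta = k_1\psi_1''(f_+)+k_2\psi_2''(f_-), \quad \partial_{\rho\rho}\eta = \alpha\,\partial_{uu}\eta, \quad \partial_{u\rho}\eta = \sqrt{\alpha}\bigl(k_1\psi_1''(f_+)-k_2\psi_2''(f_-)\bigr).
\]
Since $\alpha>0$ and $k_1,k_2,\psi_1'',\psi_2''\geq 0$, the diagonal entries of the Hessian are nonnegative and its determinant simplifies to $4\alpha k_1 k_2 \psi_1''(f_+)\psi_2''(f_-)\geq 0$, yielding positive semidefiniteness and hence convexity. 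No step is substantial; the only point that needs a little care is keeping the $\sqrt{\alpha}$ signs consistent on $f_+$ and $f_-$ when verifying the flux identity.
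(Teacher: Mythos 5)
Your proposal is correct and matches the paper's intent exactly: the paper's proof is the one-line remark that it is ``similar to Proposition~\ref{pro:entropy1},'' and your three steps (evaluating the kinetic integrals up to the harmless constants $\psi_i(0),\phi_i(0)$, verifying the flux relations \eqref{eq:entropyflux} by the chain rule on $f_\pm$, and checking positive semidefiniteness of the Hessian with determinant $4\alpha k_1 k_2 \psi_1''(f_+)\psi_2''(f_-)\geq 0$) are precisely the asymmetric, two-weight generalization of the computation carried out there for $\eta_\psi$.
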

\begin{proof}
The proof is similar to Proposition \ref{pro:entropy1} and we omit it.
\end{proof}
\begin{remark}\label{rmk:counterpart}
When $k_1=k_2=\frac{1}{2}$ and $\psi_1=\psi_2=\psi$, the entropy $\eta$ defined by \eqref{eq:entroppm} is equivalent to $\eta_\psi$ given in \eqref{eq:dAlembertkinetic}.
Recall Definition \ref{def:entropysolution}. For System \eqref{eq:systemEuler3}, the entropy is defined by $\eta_g$ which is one part of   \eqref{eq:dAlembertkinetic}. If we use the counter part $\eta_\psi$ in \eqref{eq:dAlembertkinetic} to define entropy class and entropy solutions of the coupled Burgers system \eqref{eq:systemEuler1}, we can also obtain global existence of solutions. This can not ensure the uniqueness of entropy solutions.  However, if we use the entropies given by \eqref{eq:entroppm}, which can be viewed as a class of entropies modifying $\eta_\psi$, to define entropy solutions of the coupled Burgers system \eqref{eq:systemEuler1}, we can obtain the stability (hence uniqueness) of solutions (see Theorem \ref{thm:wellposedness}).
\end{remark}

We give the definition of entropy solutions of the coupled Burgers system \eqref{eq:systemEuler1}.
\begin{definition}\label{def:entropysolution1}
A couple $(\rho,u)$ is called an entropy solution of the coupled Burgers system \eqref{eq:systemEuler1} if $\rho\geq0$  and it satisfies
\begin{align}\label{eq:entropyIneq}
\partial_t\eta(\rho,u)+\partial_xq(\rho,u)\leq0,
\end{align}
in distribution sense for any convex entropies $(\eta, q)$ given by \eqref{eq:entroppm}, \eqref{eq:entropfluxpm}.
\end{definition}

Next, we present an important result about the equivalent relations between entropy solutions of the coupled Burgers system \eqref{eq:systemEuler1} and solutions of the decoupled Burgers equations \eqref{eq:decouple1} and \eqref{eq:decouple2}.
\begin{proposition}\label{pro:equivalence}
If $(\rho,u)$ is an entropy solution to the coupled Burgers system \eqref{eq:systemEuler1}, then $f_\pm=u\pm\sqrt{\alpha}\rho$ are entropy solutions to the decoupled Burgers equations \eqref{eq:decouple1} and \eqref{eq:decouple2}. Conversely, if $f_\pm$ such that $f_+\geq f_-$ are entropy solutions to the decoupled Burgers equations \eqref{eq:decouple1} and \eqref{eq:decouple2}, then $(\rho,u)=\left(\frac{f_++f_-}{2},\frac{f_+-f_-}{2\sqrt{\alpha}}\right)$ is an entropy solution  to the coupled Burgers system \eqref{eq:systemEuler1}.
\end{proposition}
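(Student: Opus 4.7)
The plan is to exploit the additive decomposition of the admissible entropy class \eqref{eq:entroppm}--\eqref{eq:entropfluxpm} over the Riemann invariants $f_\pm = u \pm \sqrt{\alpha}\rho$. Concretely, every admissible entropy pair for the coupled system is a nonnegative linear combination of a pair depending only on $f_+$ and a pair depending only on $f_-$, and in each summand the building block $(\psi_i(f_\pm),\phi_i(f_\pm))$, with $\phi_i'(v) = v\psi_i'(v)$, is precisely the standard convex entropy pair for the scalar Burgers equation $\partial_t f_\pm + f_\pm \partial_x f_\pm = 0$. The proposition will follow by separating the two summands in one direction and reassembling them in the other.

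For the forward implication, suppose $(\rho,u)$ satisfies \eqref{eq:entropyIneq} for every admissible pair. Specializing to $(k_1,k_2) = (1,0)$ with an arbitrary convex $\psi_1 \in C^2(\mathbb{R})$ yields, in the distributional sense,
\begin{equation*}
\partial_t \psi_1(f_+) + \partial_x \phi_1(f_+) \leq 0,
\end{equation*}
which is exactly the Kruzkov-type entropy condition for \eqref{eq:decouple1}. Choosing in addition the affine $\psi_1(v) = \pm v$ (both convex, with $\phi_1(v) = \pm v^2/2$) gives the two-sided inequality and hence the weak form of \eqref{eq:decouple1} itself. Exchanging the roles of $k_1$ and $k_2$ produces the analogous statements for $f_-$, so $f_\pm$ are entropy solutions of \eqref{eq:decouple1}--\eqref{eq:decouple2}.

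For the converse, assume $f_\pm$ are entropy solutions of the decoupled Burgers equations with $f_+ \geq f_-$, and define $u = (f_+ + f_-)/2$ and $\rho = (f_+ - f_-)/(2\sqrt{\alpha})$, so that $\rho \geq 0$ is automatic from $f_+ \geq f_-$. For any convex $\psi_1,\psi_2 \in C^2(\mathbb{R})$ and any $k_1,k_2 \geq 0$, I add the entropy inequalities for $f_+$ and $f_-$ with these weights to obtain
\begin{equation*}
k_1\bigl[\partial_t \psi_1(f_+) + \partial_x \phi_1(f_+)\bigr] + k_2\bigl[\partial_t \psi_2(f_-) + \partial_x \phi_2(f_-)\bigr] \leq 0,
\end{equation*}
which is precisely \eqref{eq:entropyIneq} for the pair \eqref{eq:entroppm}--\eqref{eq:entropfluxpm}. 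Hence $(\rho,u)$ satisfies Definition \ref{def:entropysolution1}.

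The argument is essentially structural: it hinges only on the linearity of the admissible entropy class in $(k_1,k_2)$ and the identification of the building blocks as the standard scalar Burgers entropy pairs; no compactness, mollification, or regularity issues arise. The only point deserving care is checking that affine $\psi_i$ lie in the admissible class (which they do, since they are $C^2$ and trivially convex), so that in the forward direction one recovers not merely entropy inequalities but the weak form of the decoupled equations themselves, and symmetrically that the sign condition $f_+ \geq f_-$ is exactly what is needed to match the $\rho \geq 0$ requirement of Definition \ref{def:entropysolution1}.
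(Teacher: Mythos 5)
Your proof is correct and takes essentially the same approach as the paper's: specialize $(k_1,k_2)$ to $(1,0)$ and $(0,1)$ for the forward direction, and take nonnegative linear combinations of the two scalar entropy inequalities for the converse, with $f_+ \geq f_-$ guaranteeing $\rho \geq 0$. The only (harmless) addition is your explicit remark that affine $\psi$ recovers the weak form of the decoupled equations, which the paper leaves implicit in the notion of scalar entropy solution.
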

\begin{proof}
\textbf{Step 1.}  Assume $(\rho,u)$ is an entropy solution to the coupled Burgers system \eqref{eq:systemEuler1}. Hence, the inequality \eqref{eq:entropyIneq} holds for any $\eta$ given by \eqref{eq:entroppm}. For any convex function $\psi$, let $k_1=1$, $k_2=0$ and $\psi_1=\psi$ in \eqref{eq:entroppm}. At this time, the inequality \eqref{eq:entropyIneq} gives
\begin{align}\label{eq:entropyinede}
\partial_t\psi(f_+)+\partial_x\phi(f_+)\leq0,
\end{align}
where $\phi'(v)=v\psi'(v)$ and $f_+=u+\sqrt{\alpha}\rho$. Similarly, when $k_1=0$, $k_2=1$ and $\psi_2=\psi$, we can obtain
\begin{align}\label{eq:entropyinede1}
\partial_t\psi(f_-)+\partial_x\phi(f_-)\leq0
\end{align}
in distribution sense.
 Inequalities \eqref{eq:entropyinede} and  \eqref{eq:entropyinede1} are exactly the entropy inequalities for the decoupled Burgers equations \eqref{eq:decouple1} and \eqref{eq:decouple2}. Hence, $f_\pm$ are entropy solutions to \eqref{eq:decouple1} and  \eqref{eq:decouple2}.

\textbf{Step 2.} Let $f_\pm$ be an entropy solution of the decoupled Burgers equations \eqref{eq:decouple1} and \eqref{eq:decouple2}. Due to $f_+\geq f_-$, we have $\rho=\frac{f_+-f_-}{2\sqrt{\alpha}}\geq0$. Moreover, inequality \eqref{eq:entropyinede} holds for any entropy pair $(\psi_1,\phi_1)$ with $\phi'_1(v)=v\psi'_1(v)$, and inequality \eqref{eq:entropyinede1} holds for any entropy pair $(\psi_2,\phi_2)$ with $\phi'_2(v)=v\psi'_2(v)$. The linear combination of \eqref{eq:entropyinede} and \eqref{eq:entropyinede1} with nonnegative coefficients $k_1$ and $k_2$ generates the inequality \eqref{eq:entropyIneq}.
Hence, $(\rho,u)$ is an entropy solution to the coupled Burgers system \eqref{eq:systemEuler1}.

\end{proof}

Due to the well-posedness of the scalar conservation law (Burgers equation), we have the following well-posedness result for the coupled Burgers system \eqref{eq:systemEuler1}:
\begin{theorem}\label{thm:wellposedness}
Let $\rho_0(x)$ and $u_0(x)$ be two bounded measurable functions satisfying $\rho_0\geq0$. Then:

$\mathrm{(i)}$ There exist a unique entropy solution $(\rho(x,t),u(x,t))$ to the coupled Burgers system \eqref{eq:systemEuler1} such that $\rho\geq0$ and $(\rho,u)|_{t=0}=(\rho_0,u_0)$.

$\mathrm{(ii)}$ Let $(\tilde{\rho},\tilde{u})$ be another entropy solution of the coupled Burgers system \eqref{eq:systemEuler1} subject to initial datum $(\tilde{\rho}_0(x),\tilde{u}_0(x))$  with $\tilde{\rho}_0\geq0$. If $u_0-\tilde{u}_0,~\rho_0-\tilde{\rho}_0\in L^1(\mathbb{R})$,  then $u(\cdot,t)-\tilde{u}(\cdot,t),~\rho(\cdot,t)-\tilde{\rho}(\cdot,t)\in L^1(\mathbb{R})$ and
\begin{align}\label{eq:stability}
\sqrt{\alpha}\|\rho(\cdot,t)-\tilde{\rho}(\cdot,t)\|_{L^1}+\|u(\cdot,t)-\tilde{u}(\cdot,t)\|_{L^1}\leq 2(\sqrt{\alpha}\|\rho_0-\tilde{\rho}_0\|_{L^1}+\|u_0-\tilde{u}_0\|_{L^1}).
\end{align}
\end{theorem}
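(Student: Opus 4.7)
The plan is to reduce the theorem to the classical Kruzkov theory for the scalar Burgers equation by passing through Proposition \ref{pro:equivalence}. Since $\alpha>0$, the Riemann invariants $f_\pm=u\pm\sqrt{\alpha}\rho$ decouple into two independent inviscid Burgers equations, for which existence, uniqueness and $L^1$-stability of entropy solutions are standard.

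For existence, I would set $f_{+,0}:=u_0+\sqrt{\alpha}\rho_0$ and $f_{-,0}:=u_0-\sqrt{\alpha}\rho_0$; these are bounded and measurable, and since $\rho_0\geq 0$ we have $f_{+,0}\geq f_{-,0}$. Kruzkov's theorem produces unique bounded entropy solutions $f_\pm$ to \eqref{eq:decouple1}--\eqref{eq:decouple2} with these initial data. The key monotonicity step is to show $f_+(x,t)\geq f_-(x,t)$ for all $t>0$, which follows from the $L^1$-contraction/comparison principle for scalar conservation laws applied to two entropy solutions of the same Burgers equation: if $f_{+,0}\geq f_{-,0}$ a.e., then $f_+(\cdot,t)\geq f_-(\cdot,t)$ a.e. for all $t$. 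Then I define
\begin{equation*}
\rho:=\frac{f_+-f_-}{2\sqrt{\alpha}}\geq 0,\qquad u:=\frac{f_++f_-}{2},
\end{equation*}
and invoke the converse direction of Proposition \ref{pro:equivalence} to conclude that $(\rho,u)$ is an entropy solution of \eqref{eq:systemEuler1} with the required initial data.

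For uniqueness and stability, I would take two entropy solutions $(\rho,u)$ and $(\tilde\rho,\tilde u)$ with initial data as in the statement and apply the forward direction of Proposition \ref{pro:equivalence} to obtain entropy solutions $f_\pm$ and $\tilde f_\pm$ of \eqref{eq:decouple1}--\eqref{eq:decouple2}. The $L^1$-contraction for scalar conservation laws gives
\begin{equation*}
\|f_\pm(\cdot,t)-\tilde f_\pm(\cdot,t)\|_{L^1}\leq \|f_{\pm,0}-\tilde f_{\pm,0}\|_{L^1}\leq \|u_0-\tilde u_0\|_{L^1}+\sqrt{\alpha}\|\rho_0-\tilde\rho_0\|_{L^1}.
\end{equation*}
Combining this with the identities
\begin{equation*}
\sqrt{\alpha}(\rho-\tilde\rho)=\tfrac{1}{2}[(f_+-\tilde f_+)-(f_--\tilde f_-)],\qquad u-\tilde u=\tfrac{1}{2}[(f_+-\tilde f_+)+(f_--\tilde f_-)],
\end{equation*}
and the triangle inequality yields \eqref{eq:stability}. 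Setting $(\tilde\rho_0,\tilde u_0)=(\rho_0,u_0)$ forces $(\rho,u)=(\tilde\rho,\tilde u)$, giving uniqueness.

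The most delicate point will not be any single calculation but making sure Proposition \ref{pro:equivalence} is legitimately applicable in both directions at the regularity level of bounded measurable data: I need to verify that the entropy inequalities for $f_\pm$ produced by Kruzkov theory cover exactly the class of convex entropies used in Definition \ref{def:entropysolution1} (namely $\eta=k_1\psi_1(f_+)+k_2\psi_2(f_-)$ with $k_i\geq 0$, $\psi_i$ convex), and conversely that every admissible entropy of \eqref{eq:systemEuler1} splits into a nonnegative combination of scalar Burgers entropies in $f_+$ and in $f_-$. This is precisely the content of Proposition \ref{pro:equivalence}, and once this correspondence is invoked the theorem reduces to the well-known $L^\infty$-existence and $L^1$-contraction theorems of Kruzkov.
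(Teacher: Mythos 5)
Your proposal is correct and follows essentially the same route as the paper: decouple via the Riemann invariants $f_\pm=u\pm\sqrt{\alpha}\rho$, solve each scalar Burgers equation by Kruzkov theory, use the comparison principle to preserve $f_+\geq f_-$, and transfer the result and the $L^1$-contraction back through Proposition \ref{pro:equivalence}. The paper simply cites Serre's monograph (Proposition 2.3.6) for the ordering and $L^1$-stability of scalar entropy solutions in place of your direct appeal to Kruzkov.
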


\begin{proof}
(i) Consider the decoupled Burgers equations \eqref{eq:decouple1} and \eqref{eq:decouple2} with initial datum $f_\pm(x,0):=u_0(x)\pm\sqrt{\alpha}\rho_0(x)$. Then, there is a unique entropy solutions $f_\pm(x,t)$ to \eqref{eq:decouple1} and \eqref{eq:decouple2} respectively. Due to $\rho_0\geq0$, we have $f_+(x,0)\geq f_-(x,0)$. Hence, from \cite[Proposition 2.3.6 ]{Serre1999Systems}, we have $f_+(x,t)\geq f_-(x,t)$ for any $t>0$ and $x\in\mathbb{R}$. By Proposition \ref{pro:equivalence}, there is a unique solution to the coupled Burgers system \eqref{eq:systemEuler1} given by
\begin{align}
u(x,t)=\frac{f_+(x,t)+f_-(x,t)}{2},\quad \rho(x,t)=\frac{f_+(x,t)-f_-(x,t)}{2\sqrt{\alpha}}.
\end{align}
Moreover, we have $\rho\geq0$.

(ii) Let $f_\pm:=u\pm\sqrt{\alpha}\rho$ and $\tilde{f}_\pm:=\tilde{u}\pm\sqrt{\alpha}\tilde{\rho}$. Then, $f_\pm$ and $\tilde{f}_\pm$ are entropy solutions to the decoupled Burgers equations \eqref{eq:decouple1} and \eqref{eq:decouple2}. By the stability results for scalar conservation law (see \cite[Proposition 2.3.6 ]{Serre1999Systems}), we have
\begin{align*}
&\sqrt{\alpha}\|\rho(\cdot,t)-\tilde{\rho}(\cdot,t)\|_{L^1}+\|u(\cdot,t)-\tilde{u}(\cdot,t)\|_{L^1}\nonumber\\
=&\left\|\frac{f_{+}(\cdot,t)-f_{-}(\cdot,t)}{2}-\frac{\tilde{f}_{+}(\cdot,t)-\tilde{f}_{-}(\cdot,t)}{2}\right\|_{L^1}\nonumber\\
&\qquad\qquad\qquad\qquad\qquad\qquad+\left\|\frac{f_{+}(\cdot,t)+f_{-}(\cdot,t)}{2}-\frac{\tilde{f}_{+}(\cdot,t)+\tilde{f}_{-}(\cdot,t)}{2}\right\|_{L^1}\nonumber\\
\leq& \|f_{+}(\cdot,0)-\tilde{f}_{+}(\cdot,0)\|_{L^1}+\|f_{-}(\cdot,0)-\tilde{f}_{-}(\cdot,0)\|_{L^1} \nonumber\\
\leq &2(\sqrt{\alpha}\|\rho_0-\tilde{\rho}_0\|_{L^1}+\|u_0-\tilde{u}_0)\|_{L^1}).
\end{align*}
\end{proof}
\begin{remark}
We remark that $f_+$ and $f_-$ are entropy solutions to the decoupled Burgers equations \eqref{eq:decouple1} and \eqref{eq:decouple2} respectively if and only if there are two positive Radon measures $\mu_+,\mu_-\in\mathcal{M}_{+}(\mathbb{R})$ such that the kinetic functions $\chi_\pm(v;\rho,u)$ given by \eqref{eq:kineticf} are  weak solution of the kinetic equations \cite{Perthame2000Kinetic}
\[
\partial_t\chi_\pm+v\partial_x\chi_\pm=\partial_{v}\mu_\pm.
\]
Actually, for an entropy pair $(\psi,\phi)$, one has
\[
\partial_t\psi(f_\pm)+\partial_x\phi(f_\pm)=\int_\mathbb{R}\psi'(v)(\partial_t\chi_\pm+v\partial_x\chi_\pm)\di v=-\int_\mathbb{R}\psi''(v)\mu_\pm \di v\leq0
\]
in distribution sense.
{For more detailed discussions of using these kinetic density functions to study the coupled Burgers system \eqref{eq:systemEuler1} for $(\rho, u)$, one can refer to \cite{Perthame2000Kinetic}.}
\end{remark}

\begin{remark}[Difference between Definition \ref{def:entropysolution} and Definition \ref{def:entropysolution1}]
From \eqref{eq:Hamiltonianu12}, the Hamiltonian $H_1^u$ corresponds to \eqref{eq:entroppm} for $\psi_1(v)=\psi_2(v)=v^3$ and $k_1=k_2=1/12$.  At this time $\psi$ is not convex and hence from Proposition \ref{pro:entropy1}, we know that $H_1^u$ is not convex with respect to $(\rho,u)$. When $\rho\geq0$, we also have $H_2^u$ is convex with respect to $(\rho,u)$. However, we have
\[
H_2^u(\rho,u)=\frac{1}{4}\int_{\mathbb{R}}v^2\chi_+(v;\rho,u)\di v-\frac{1}{4}\int_{\mathbb{R}}v^2\chi_-(v;\rho,u)\di v,
\]
which is not a proper entropy as in Definition \ref{def:entropysolution1}.

Similarly, one can show that $H_1^m$  is not a proper entropy as in Definition \ref{def:entropysolution}, while $H_2^m$ is a convex entropy for system \eqref{eq:systemEuler3}.

\end{remark}

To end this subsection, we give the kinetic formulation for the well known Lax entropy \cite{lax1957hyperbolic}. Let the solution of the wave equation  \eqref{eq:waveequation} have the form $\eta(k;\rho, u)=e^{ku}\sigma(k;\rho)$ for some constant parameter $k\neq0$. Then, equation  \eqref{eq:waveequation} becomes the ODE
\[
\sigma_{\rho\rho}(\rho)=\alpha k^2\sigma(\rho),\quad \sigma(0)=0,~~\sigma'(0)=1.
\]
Hence, we have
\[
\sigma(\rho)=\frac{e^{\sqrt{\alpha}k\rho}-e^{-\sqrt{\alpha}k\rho}}{2\sqrt{\alpha}k},\quad \eta(k;\rho,u)=\frac{e^{kf_+}-e^{kf_-}}{2\sqrt{\alpha}k}.
\]
This yields a family of Lax entropy pairs:
\begin{align}\label{eq:Laxentropypair}
\eta(k;\rho, u) = \frac{e^{k f_+}-e^{kf_-}}{2\sqrt{\alpha}k},\quad q(k;\rho, u)  = \frac{(kf_+-1)e^{kf_+}-(kf_--1)e^{kf_-}}{2\sqrt{\alpha}k^2}.
\end{align}
Note that both $\eta$ and $q$ are real functions. When $g(v)=e^{kv}$ in \eqref{eq:entropyg}, $\eta_g(\rho,m)$ recovers the Lax entropy given in \eqref{eq:Laxentropypair}.

\section{Lagrangian dynamics for \eqref{eq:systemEuler1} and its relation with Calogero-Moser model}\label{sec:Lagrangemass}
In this section, we derive the Lagrangian dynamics for the coupled Burgers system \eqref{eq:systemEuler1}, which recovers the dynamics \eqref{eq:gas} for gas. Moreover, we present a nonlinear spring-mass system (Fermi-Pasta-Ulam-Tsingou model) with nearest-neighbor interactions and its continuum limit yields the Lagrangian dynamics of the coupled Burgers system \eqref{eq:systemEuler1}.

\subsection{Lagrangian dynamics for  the coupled Burgers system \eqref{eq:systemEuler1}}\label{sec:Lagrange}
Consider an initial datum for the coupled Burgers system \eqref{eq:systemEuler1}:
\begin{align}\label{eq:systemEuler1initial}
u(x,0)=u_0(x),\quad \rho(x,0)=\rho_0(x),~~x\in\mathbb{R}.
\end{align}
Assume that initial density function $\rho_0:\mathbb{R}\rightarrow\mathbb{R}$ satisfies $\rho_0(x)>0$ and the  total mass $||\rho_0||_{L^1}=1$. Define the initial cumulative mass distribution function $Z_0$:
\begin{align}\label{eq:CumulativeMD}
Z_0(x):=\int_{-\infty}^x\rho_0(y)\di y~\textrm{ for }~x\in\mathbb{R}.
\end{align}
Then, function $Z_0:\mathbb{R}\to(0,1)$ is strictly increasing.
Hence, there is an inverse function $X_0:(0,1)\to\mathbb{R}$ such that
\begin{align}\label{eq:definitial}
Z_0(X_0(\xi))=\xi, ~~X_0(Z_0(x))=x~\textrm{ for }~x\in \mathbb{R}, ~~\xi\in(0,1).
\end{align}
Moreover, we have
\begin{align}\label{eq:initialheight}
Z_0(0)=X_0(0)=0,~\textrm{ and }~\frac{1}{X'_0(\xi)}=Z'_0(x)=\rho_0(x)~\textrm{ for }~\xi=Z_0(x).
\end{align}
Here, $x$ is the Eulerian coordinates and we take $\xi$ as the Lagrangian coordinates.

Give an Eulerian velocity field $u:\mathbb{R}\times[0,\infty)\rightarrow\mathbb{R}$. Define the flow map $X(\xi,t)$ satisfying
\begin{gather}\label{eq:leastdynamics}
\left\{
\begin{split}
&\dot{X}(\xi,t)=u(X(\xi,t),t), ~~\xi\in (0,1),~t>0,\\
&X(\xi,0)=X_0(\xi).
\end{split}
\right.
\end{gather}
Here, $\dot{X}(\xi,t)$ denotes $\partial_tX(\xi,t)$.
Hence, we have $\partial_\xi \dot{X} = \partial_x u \partial_\xi X$ and thus
\begin{align}\label{eq:Xxixi}
\partial_\xi X(\xi,t)=X'_0(\xi)e^{\int_0^t\partial_xu(X(\xi,s),s)\di s}>0,~~\xi\in(0,1).
\end{align}
Define the density function in Lagrangian coordinates at time $t$ as:
\begin{align}\label{eq:densityfunction}
\rho(X(\xi,t),t):=\frac{1}{\partial_\xi X(\xi,t)},
\end{align}
Hence,
\begin{align}\label{eq:densitymap}
\rho(x,t)\di x=\di\xi~\textrm{ and }~\partial_t\rho+\partial_x(\rho u)=0, \quad \rho(x,0)=\rho_0(x),
\end{align}
which is the first equation in the coupled Burgers system \eqref{eq:systemEuler1}.
We also have local mass conservation law:
\[
\int_{X(\xi_1,t)}^{X(\xi_2,t)} \rho(x,t) \di x =\xi_1-\xi_2= \int^{X_0(\xi_1)}_{X_0(\xi_2)} \rho_0(x) \di x~\textrm{ for any }~\xi_i\in (0,1),~i=1,2.
\]
By \eqref{eq:densityfunction}, we obtain
\[
\partial_x\rho(X(\xi,t),t)=\frac{1}{\partial_\xi X(\xi,t)}\partial_\xi\left(\frac{1}{\partial_\xi X(\xi,t)}\right)=-\frac{\partial_{\xi\xi}X(\xi,t)}{(\partial_\xi X)^3(\xi,t)},
\]
which gives
\begin{align}\label{eq:Lagrangerhorhox}
-(\alpha\rho\partial_x\rho)(X(\xi,t),t)=\alpha\frac{\partial_{\xi\xi}X(\xi,t)}{(\partial_\xi X)^4(\xi,t)}.
\end{align}
Set
\begin{align}\label{eq:DefV}
V(\xi,t):=u(X(\xi,t),t).
\end{align}
Combining \eqref{eq:Lagrangerhorhox},  the coupled Burgers system  \eqref{eq:systemEuler1}  is recast to the Lagrangian dynamics:
\begin{gather}\label{eq:LagrangeEuler}
\left\{
\begin{split}
&\dot{X}(\xi,t)=V(\xi,t), \quad \xi\in (0,1),~t>0,\\
&\dot{V}(\xi,t)=\alpha\frac{\partial_{\xi\xi}X(\xi,t)}{(\partial_\xi X)^4(\xi,t)}=-\frac{\alpha}{3}\partial_\xi\left(\frac{1}{(\partial_\xi X)^3(\xi,t)}\right),
\end{split}
\right.
\end{gather}
subject to initial datum
\begin{gather}\label{eq:LagrangeEulerinitial}
\left\{
\begin{split}
&X(\xi,0)=X_0(\xi), \quad \xi\in (0,1),\\
&V(\xi,0)=u_0(\xi).
\end{split}
\right.
\end{gather}
Here, $u_0$ is given by \eqref{eq:systemEuler1initial} and $X_0(\xi)$ is given by \eqref{eq:definitial}.
Taking derivative of the first equation in \eqref{eq:LagrangeEuler} with respect to $\xi$, we can recover the dynamics  \eqref{eq:gas} for gas with $\tau(\xi,t):=X_\xi(\xi,t)$.

Next, we briefly show least action principle for the Lagrangian dynamics \eqref{eq:LagrangeEuler}.
Corresponding to the total energy $H_2^m(\rho,m)$ given by \eqref{eq:mrho}, we use Legendre transformation to obtain the Lagrangian functional as
\begin{align*}
\mathscr{L}(\rho,u)=\int_{\mathbb{R}}m \frac{\delta H_2^m}{\delta m}  \di x-H_2^m(\rho,m)=\int_{\mathbb{R}} \left(\frac{1}{2}\rho u^2-  \frac{\alpha}{6}\rho^3\right)\di x
\end{align*}	
The momentum $m$ is recovered by taking the variation of $\mathscr{L}$ with respect to $u$:
\[
m = \frac{\delta \mathscr{L}}{\delta u}=\rho u.
\]
The action  is defined by
\begin{align}\label{eq:actionforEuler}
\mathcal{A}(X)=\frac{1}{2}\int_0^1\int_{\mathbb{R}} \left(\rho u^2 -  \frac{\alpha}{3}\rho^3\right)\di x\di t=\frac{1}{2}\int_0^1\int_{0}^1  \left(\dot{X}^2(\xi,t) - \frac{\alpha}{3(\partial_\xi X)^2(\xi,t)}\right) \di \xi \di t.
\end{align}
Next, consider two increasing functions for $\xi\in [0,1]$: $X(\xi,0)=X_0(\xi)$ and $X(\xi,1)=X_1(\xi)$. We formally show that the coupled Burgers system \eqref{eq:systemEuler1} corresponds to a critical path of the action $\mathcal{A}(X)$ in some manifold connecting $X_0$ and $X_1$ for $t\in[0,1]$.
For any $Y\in C_c^\infty((0,1)\times(0,1))$, we have
\begin{multline*}
\int_0^1\int_{0}^1\frac{\delta \mathcal{A}}{\delta X}\cdot Y\di \xi \di t=\lim_{\epsilon\rightarrow0}\frac{\mathcal{A}(X+\epsilon Y)-\mathcal{A}(X)}{\epsilon}\\
=\frac{1}{2}\int_0^1\int_{0}^1\left(2\dot{X}\dot{Y} + \frac{2\alpha}{3(\partial_\xi X)^3}\partial_\xi Y\right) \di \xi \di t=\int_0^1\int_{0}^1\left[-\ddot{X} - \partial_\xi\left(\frac{\alpha}{3(\partial_\xi X)^3}\right)\right]Y \di \xi \di t.
\end{multline*}
This gives 
\begin{align*}
\frac{\delta \mathcal{A}}{\delta X}=-\ddot{X}-\partial_\xi \left(\frac{\alpha}{3(\partial_\xi X)^3}\right).
\end{align*}
Take $\displaystyle{\frac{\delta A}{\delta X}=0},$ and we have
\begin{align}\label{eq:action}
\ddot{X}-\alpha\frac{\partial_{\xi\xi} X}{(\partial_\xi X)^4}=0,
\end{align}
which corresponds to the Lagrangian dynamics \eqref{eq:LagrangeEuler}.

\subsection{A spring-mass system with nearest-neighbor interactions}
In this subsection, we present a local interaction model for $N$ masses and show that the Lagrangian dynamics system \eqref{eq:LagrangeEuler} is exactly the continuum limit equation of this model. For $N$ ordered masses $x_1(t)<\cdots<x_N(t)$, each mass is evolved by a force generated by interactions between nearest neighbors and the model is described by
\begin{gather}\label{eq:springModel}
\left\{
\begin{split}
&\dot{x}_j(t)=v_j(t) , \quad 1\leq j\leq N,\\
&\dot{v}_j(t)=\frac{\alpha}{3N^2}\left[\frac{1}{(x_{j+1}(t)-x_j(t))^3}+\frac{1}{(x_{j-1}(t)-x_{j}(t))^3}\right].
\end{split}
\right.
\end{gather}
Here we assume
\begin{align}
x_0=x_{N+1}=+\infty,~\textrm{ and }~\frac{1}{(x_{0}(t)-x_1(t))^3}=\frac{1}{(x_{N+1}(t)-x_N(t))^3}=0.
\end{align}
{The masses accelerated by an repulsive force if  $\alpha<0$. While $\alpha>0$, the masses attract each other.}
System \eqref{eq:springModel} is a Hamiltonian system corresponding to the Hamiltonian functional:
\begin{align}\label{eq:CMHamiltonian1}
{H}(x,p)=\frac{N}{2}\sum_{j=1}^Np_j^2-\frac{\alpha}{12N^3}\sum_{j=1}^N\sum_{k= j\pm 1}\frac{1}{(x_j-x_{k})^2}.
\end{align}
Momentum $p_i$ equals to mass $1/N$ times velocity $v_i$ which means $v_i=Np_i$. Hence, \eqref{eq:springModel} equals to
\begin{gather*}
\left\{
\begin{split}
&\dot{x}_j(t)=\partial_{p_j}{H},\quad \quad 1\leq j\leq N,\\
&\dot{p}_j(t)=-\partial_{x_j}{H}.
\end{split}
\right.
\end{gather*}

Model \eqref{eq:springModel} describes local interactions between masses and their nearest-neighbors, which is a special case of the Fermi-Pasta-Ulam-Tsingou lattice system. We compare \eqref{eq:springModel} with another Fermi-Pasta-Ulam-Tsingou lattice system, Toda lattice,  given by the system of ordinary differential equations
\begin{align}\label{eq:Toda}
\frac{\di^2q_j}{\di t^2}=e^{q_{j+1}-q_j}-e^{q_{j}-q_{j-1}},~~j\in\mathbb{Z}.
\end{align}
Note that Toda lattice is an integrable system. We do not know whether System \eqref{eq:springModel} is an integrable system or not. However, if each mass interacts with all the other masses with the same manner, we can obtain an integrable global interaction model, the Calogero-Moser model (see Remark \ref{rmk:Calogero-Moser model}).

Next, we formally derive the continuum limit of the local interaction mass system. To do this, we assume the masses initially distribute uniformly and $x_j(t)=X(\xi,t)$, $x_{j+1}(t)=X(\xi+1/N,t)$ and $x_{j-1}(t)=X(\xi-1/N,t)$ for some $\xi\in(0,1)$ and $2\leq j\leq N-1$. Using Taylor expansion,  we have
\[
x_{j+1}(t)-x_j(t)=\partial_{\xi}X(\xi,t)N^{-1}+\frac{1}{2}\partial_{\xi\xi}X(\xi,t)N^{-2}+\frac{1}{6}\partial_{\xi\xi\xi}X(\xi,t)N^{-3}+O((N^{-4}),
\]
and
\[
x_{j-1}(t)-x_j(t)=-\partial_{\xi}X(\xi,t)N^{-1}+\frac{1}{2}\partial_{\xi\xi}X(\xi,t)N^{-2}-\frac{1}{6}\partial_{\xi\xi\xi}X(\xi,t)N^{-3}+O((N^{-4}).
\]
Hence, we can obtain
\begin{align*}
&\frac{\alpha}{3N^2}\left[\frac{1}{(x_{j+1}(t)-x_{j}(t))^3}+\frac{1}{(x_{j-1}(t)-x_{j}(t))^3}\right]\nonumber\\
=&\frac{\alpha}{3N^2}\frac{(x_{j-1}+x_{j+1}-2x_j)\Big[(x_{j-1}-x_{j})^2+(x_{j+1}-x_{j})^2-(x_{j-1}-x_{j})(x_{j+1}-x_{j})\Big]}{(x_{j+1}-x_{j})^3(x_{j-1}-x_{j})^3}\nonumber\\
=&\frac{\alpha}{3N^2}\frac{\Big[\partial_{\xi\xi}X (\xi,t)N^{-2}+O(N^{-4})\Big]\cdot\Big[3(\partial_{\xi}X)^2(\xi,t)N^{-2}+O(N^{-3})\Big]}{(\partial_{\xi}X)^6(\xi,t)N^{-6}+O(N^{-7})}\nonumber\\
=&\frac{\alpha}{3N^2}\frac{N^{-4}}{N^{-6}}\frac{\Big[\partial_{\xi\xi}X(\xi,t)+O(N^{-2})\Big]\cdot\Big[3(\partial_{\xi}X)^2(\xi,t)+O(N^{-1})\Big]}{(\partial_{\xi}X)^6(\xi,t)+O(N^{-1})}\nonumber\\
=&\alpha\frac{\partial_{\xi\xi}X(\xi,t)+O(N^{-1})}{(\partial_{\xi}X)^4(\xi,t)+O(N^{-1})}.
\end{align*}
Let $N\to \infty$ and we obtain
\[
\lim_{N\to\infty}\frac{\alpha}{3N^2}\left[\frac{1}{(x_{j+1}(t)-x_{j}(t))^3}+\frac{1}{(x_{j-1}(t)-x_{j}(t))^3}\right]=\alpha\frac{\partial_{\xi\xi}X(\xi,t)}{(\partial_{\xi}X)^4(\xi,t)}.
\]
This gives the continuum coupled Burgers system in Lagrangian coordinate \eqref{eq:LagrangeEuler}.

\begin{remark}\label{rmk:Calogero-Moser model}
If each mass interact with all the other masses with the same manner (the force between each pair of two masses are  reciprocal proportion to the cubic of distance between them), we can obtain an integrable global interaction model, the Calogero-Moser model \cite{moser1976three}:
\begin{gather}\label{eq:CMmodel}
\left\{
\begin{split}
&\dot{x}_j(t)=v_j(t) ,\\
&\dot{v}_j(t)=\frac{4\alpha}{N^2\pi^2}\sum_{k=1,k\neq j}^N\frac{1}{(x_j(t)-x_k(t))^3},\quad 1\leq j\leq N.
\end{split}
\right.
\end{gather}
The coefficients of \eqref{eq:CMmodel} are different from the coefficients in \eqref{eq:springModel}. System \eqref{eq:CMmodel} is also a Hamiltonian system and the rescaled ($p_j=v_j/N$) Hamiltonian is given by
\begin{align}\label{eq:CMHamiltonian}
\tilde{H}(x,q)=\frac{N}{2}\sum_{j=1}^Np_j^2+\frac{\alpha}{2N^3\pi^2}\sum_{j=1}^N\sum_{k\neq j}\frac{1}{(x_j-x_k)^2}.
\end{align}
By using the Euler-MacLaurin asymptotic expansion for the Riemann integral of functions, Menon  \cite{Menon2} showed that System \eqref{eq:LagrangeEuler} is the $N\to \infty$ limit of the Calogero-Moser system corresponding to the rescaled Hamiltonian \eqref{eq:CMHamiltonian}.
As shown by \cite[Eqs. (5.13),(5.26)]{Menon2}, the Hamiltonian $\tilde{H}$ corresponds to the total energy $H_2^m$ (see \eqref{eq:Hamilton1}) of System \eqref{eq:systemEuler3}.

\end{remark}

\section*{Acknowledgements}
We are grateful to Govind Menon for some helpful discussions. We would like to thank the support by the National Science Foundation under grants DMS 1514826 and 1812573 (JGL).

\bibliographystyle{plain}
\bibliography{bibofCH}

\appendix

\section{Proof of Theorem \ref{thm:analytic}}\label{App_B}
Consider the initial datum given by \eqref{eq:initialf}.
Direct calculation shows that
\begin{align*}
f_0(z)=\frac{1}{\pi}\int_{\mathbb{R}}\frac{\rho_0(s)}{z-s}\di s&=\frac{1}{\pi}\int_{\mathbb{R}}\frac{x-s}{y^2+(x-s)^2}\rho_0(s)\di s-\frac{i}{\pi}\int_{\mathbb{R}}\frac{y}{y^2+(x-s)^2}\rho_0(s)\di s\\
&=:R\rho_0(x,y)-iP\rho_0(x,y),
\end{align*}
where $P\rho_0(x,y)$ and $R\rho_0(x,y)$ are given by the convolution of $\rho_0$ with the Poisson kernel and the conjugate Poisson kernel given by:
\begin{align}\label{eq:PoissonKernel}
P_y(x):=\frac{1}{\pi}\frac{y}{y^2+x^2}~\textrm{ and }~R_y(x):=\frac{1}{\pi}\frac{x}{y^2+x^2}.
\end{align}
Furthermore, we have
\[
\lim_{y\to0+}[R\rho_0(x,y)-iP\rho_0(x,y)]=H\rho_0(x)-i\rho_0(x)~\textrm{for a.e.}~x\in\mathbb{R}.
\]
Recall that the following properties of Poisson kernel:
\begin{enumerate}
\item[(i)] If $h\in L^2(\mathbb{R})$,  then
\[
Rh(x,y)=PHh(x,y)~\textrm{ on }~\mathbb{R}^2_+.
\]
\item[(ii)] If $h\in L^\infty(\mathbb{R})$ and is vanishing at infinity, then
\[
\lim_{y\to+\infty}Ph(x,y)=0,~~x\in\mathbb{R},
\]
and
\[
\lim_{x\to\pm\infty}Ph(x,y)=0,~~y\geq0.
\]
\item[(iii)] If $h\in L^\infty(\mathbb{R})$, then $Ph(x,y)$ is a bounded function on $\mathbb{R}^2_+$.
\end{enumerate}

Next, we prove the existence and uniqueness of $\mathbb{C}_+$-holomorphic solutions to  \eqref{eq:complexBurgers2} by the characteristics method.
Consider the characteristics given by
\begin{equation}\label{eq:complexBurgerscharac}
\frac{\di}{\di t}Z(w,t)=g(Z(w,t),t),\quad Z(w,0)=w\in \mathbb{C}_+.
\end{equation}
Then,
\[
\frac{\di^2}{\di t^2 } Z(w,t)=\frac{\di}{\di t}g(Z(w,t),t)=[\partial_t g+g\partial_zg](Z(w,t),t)=\gamma^2Z(w,t),
\]
with initial date
\[
Z(w,0)=w,\quad \frac{\di}{\di t}Z(w,t)\Big|_{t=0}=g_0(w),~~w\in\mathbb{C_+}.
\]
Equation \eqref{eq:complexBurgerscharac} gives the following complex trajectories:
\begin{gather}\label{eq:gloabltrajec}
Z(w,t)=\left\{
\begin{split}
&w \cosh \gamma t +\frac{1}{\gamma}g_0(w)\sinh \gamma t,~~\gamma>0,\\ 
&g_0(w)t+w=f_0(w)t+w,~~\gamma=0.
\end{split}
\right.
\end{gather}
Here, we only treat the case for $\gamma>0$ where the convergence to the steady state for analytical solutions happens. For the well-posedness results of the case $\gamma=0$, one can refer to \cite{Castro2008Global}.  Let 
\[
Z(w,t)=Z_1(x,y,t)+iZ_2(x,y,t),~~w=x+iy\in \mathbb{C_+},
\]
and we have
\begin{align}
Z_1(x,y,t)=x \cosh  \gamma  t+ \frac{\pi}{\gamma} R\rho_0(x,y)\sinh t-x\sinh \gamma t=x e^{-\gamma t}+\frac{ \pi}{\gamma} R\rho_0(x,y)\sinh \gamma t,\label{eq:realpart}\\
Z_2(x,y,t)=y \cosh \gamma  t- \frac{\pi}{\gamma} P\rho_0(x,y)\sinh \gamma t-y\sinh \gamma t= ye^{-\gamma t}-\frac{ \pi}{\gamma} P\rho_0(x,y)\sinh \gamma t.\label{eq:imaginarypart}
\end{align}
Because the initial date $g_0(w)$ in \eqref{eq:complexBurgers2} is a  $\mathbb{C}_+$-holomorphic function, $Z(w,t)$ given by \eqref{eq:gloabltrajec} is $\mathbb{C}_+$-holomorphic of $w$ for any $t\geq0$. Next, we present a lemma to show that for any fixed time $t>0$ the backward characteristics of \eqref{eq:gloabltrajec} are well defined on the set $\occ$. We have:
\begin{lemma}\label{lmm:bijection}
Let $0<\rho_0\in  H^s(\mathbb{R})\cap L^1(\mathbb{R})$ with $s>1/2$. For fixed  $t_0> 0$ and fixed $Z=Z_1+iZ_2\in \occ$, there exists a unique $w=x+iy\in \mathbb{C}_+$ such that \eqref{eq:realpart} and \eqref{eq:imaginarypart} hold. 

\end{lemma}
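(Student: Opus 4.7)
The plan is to apply the argument principle to the map $w\mapsto Z(w,t_0)$ given by \eqref{eq:gloabltrajec}. Fix $Z^\star=Z_1+iZ_2\in\occ$ and set $h(w):=Z(w,t_0)-Z^\star$; I want to show that $h$ has exactly one zero in $\mathbb{C}_+$. Since $f_0$ is $\mathbb{C}_+$-holomorphic (as the Stieltjes transform of $\rho_0\in L^1$), the function $Z(\cdot,t_0)=we^{-\gamma t_0}+\frac{\sinh(\gamma t_0)}{\gamma}f_0(w)$ is $\mathbb{C}_+$-holomorphic as well.

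Two boundary estimates are the heart of the argument. On the real axis, the Sokhotski--Plemelj limits give $\lim_{y\to 0^+}R\rho_0(x,y)=H\rho_0(x)$ and $\lim_{y\to 0^+}P\rho_0(x,y)=\rho_0(x)$, so $\operatorname{Im} Z(x+i0,t_0)=-\tfrac{\pi\sinh(\gamma t_0)}{\gamma}\rho_0(x)$. Because $\rho_0\in H^s(\mathbb{R})\hookrightarrow C(\mathbb{R})$ for $s>1/2$ and $\rho_0>0$ pointwise, I obtain the strict inequality $\operatorname{Im} h(x+i0)=-\tfrac{\pi\sinh(\gamma t_0)}{\gamma}\rho_0(x)-Z_2<0$ on all of $\mathbb{R}$, so $h$ never vanishes on $\mathbb{R}$ and $\arg h(x+i0)\in(-\pi,0)$. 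At infinity, the $L^1$ bound gives $f_0(w)=O(\|\rho_0\|_{L^1}/|w|)$ uniformly on $\occ$, hence $Z(w,t_0)=we^{-\gamma t_0}+O(1/|w|)$; on the upper semicircle $\Gamma_R=\{Re^{i\theta}:\theta\in[0,\pi]\}$ with $R$ large enough (depending on $|Z^\star|$), $h$ is dominated by $Re^{-\gamma t_0}e^{i\theta}$, is nonvanishing, and $\arg h(Re^{i\theta})$ tracks $\theta$ up to $O(1/R)$.

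I would then apply the argument principle to the half-disk $D_R=\mathbb{C}_+\cap\{|w|<R\}$, oriented counterclockwise. Tracked continuously, $\arg h$ along the segment $[-R,R]$ starts near $-\pi$ at $w=-R$ (third quadrant, since $\operatorname{Re} h\sim -Re^{-\gamma t_0}$ and $\operatorname{Im} h<0$), stays strictly below the real axis throughout, and ends near $0^-$ at $w=R$, a net change of $+\pi$; along $\Gamma_R$ it gains another $+\pi$. The total winding number is $1$, hence $h$ has exactly one zero in $D_R$. The uniform bound $|Z(w,t_0)-we^{-\gamma t_0}|\le C(\|\rho_0\|_{L^1},t_0)$ forces every zero to satisfy $|w|e^{-\gamma t_0}\le|Z^\star|+C$, so the zero sits in a fixed compact subset of $\mathbb{C}_+$ independent of $R$; sending $R\to\infty$ yields a unique $w$ with $Z(w,t_0)=Z^\star$. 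Moreover $w\notin\mathbb{R}$ automatically, because $\operatorname{Im} Z(x+i0,t_0)<0\le Z_2$ rules out any real $w$; hence $w\in\mathbb{C}_+$.

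The main obstacle I anticipate is the borderline case $Z_2=0$, where the required strict inequality $\operatorname{Im} h(x+i0)<0$ on $\mathbb{R}$ rests entirely on the pointwise positivity of $\rho_0$. This is precisely why the hypothesis is $\rho_0>0$ rather than $\rho_0\ge 0$, and why the Sobolev embedding $H^s\hookrightarrow C(\mathbb{R})$ for $s>1/2$ is invoked to upgrade positivity from a.e.\ to pointwise. Apart from this delicate boundary issue, the proof is a direct application of the argument principle, with the $1/|w|$ decay of the Stieltjes transform supplying the control at infinity needed to localize the unique zero.
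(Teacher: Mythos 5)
Your argument-principle proof is correct, but it takes a genuinely different route from the paper's. The paper solves the system \eqref{eq:realpart}--\eqref{eq:imaginarypart} by hand in two steps: first, with $x$ fixed, it uses the boundary behavior of the Poisson integral (\eqref{eq:proper1}) plus a monotonicity argument on the integrand $y\mapsto \frac{y}{y-Z_2/a}\cdot\frac{1}{y^2+(x-s)^2}$ to obtain a unique $y=y_{Z_2}(x)$; second, it differentiates the composite map $q(x)=ax+bR\rho_0(x,y_{Z_2}(x))$, uses the Cauchy--Riemann equations \eqref{eq:CauchyRiemann}, and proves the strict inequality $a/b+\partial_x R\rho_0>0$ on the admissible set to conclude $q$ is strictly increasing, which gives a unique $x$. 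You instead count zeros of $h(w)=Z(w,t_0)-Z^\star$ with a winding-number computation on a large half-disk: the real-axis segment contributes $\approx\pi$ because $\Im h<0$ there (this is where the pointwise positivity of $\rho_0$ together with the embedding $H^s\hookrightarrow C(\mathbb{R})$ is used), and the semicircle contributes another $\approx\pi$ because the linear term $we^{-\gamma t_0}$ dominates at infinity. Both proofs hinge on the same two mechanisms (strict negativity of $\Im Z$ on $\mathbb{R}$, and growth/injectivity coming from the linear term), but yours packages them through global complex analysis whereas the paper's stays entirely real-variable and quantitative. The paper's version yields, as a by-product, the explicit monotonicity and the Jacobian positivity \eqref{eq:big0} that is subsequently used to invoke Hadamard's global inverse function theorem; your version gives the count of preimages directly but no extra structural information.

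Two small imprecisions worth flagging. (1) You state $f_0(w)=O(\|\rho_0\|_{L^1}/|w|)$ uniformly on $\occ$. That rate is not available from $\rho_0\in L^1$ alone (one only gets $|f_0(w)|\lesssim 1/\Im w$ in the interior and $|f_0(w)|\lesssim\|\rho_0\|_{H^s}$ uniformly). What you actually have, and all you need, is $\sup_{\theta\in[0,\pi]}|f_0(Re^{i\theta})|\to 0$ as $R\to\infty$, which follows because $\rho_0$ and $H\rho_0$ lie in $H^s\hookrightarrow C_0(\mathbb{R})$, so the Poisson and conjugate Poisson extensions vanish at infinity on $\occ$. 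The winding-number computation is unaffected since you only used this to dominate $h$ by $Re^{-\gamma t_0}e^{i\theta}$. (2) Since part of $\partial D_R$ lies on $\mathbb{R}$, you are implicitly using the version of the argument principle for a function holomorphic in the open half-disk and merely continuous up to the closed half-disk. This requires $Z(\cdot,t_0)$ to extend continuously to $[-R,R]$, i.e. the boundary limits $R\rho_0(\cdot,y)\to H\rho_0$ and $P\rho_0(\cdot,y)\to\rho_0$ to be locally uniform, not just pointwise. This is guaranteed here because $\rho_0$ and $H\rho_0$ are continuous (again the Sobolev embedding with $s>1/2$), but it deserves a sentence; without it the winding number on $[-R,R]$ is not well defined.
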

\begin{proof}
Given $t_0>0$, denote
\[
a:=e^{- \gamma t_0},\quad b:=\frac{\pi}{\gamma}\sinh \gamma  t_0.
\]
Then \eqref{eq:realpart} and \eqref{eq:imaginarypart} become
\begin{align*}
Z_1 = ax+ b R\rho_0(x,y), \quad Z_2= ay-b P\rho_0(x,y).
\end{align*}

\textbf{Step 1.} In this step, we prove that for any $x$, there exists a unique $y>0$ satisfies \eqref{eq:imaginarypart} for $Z_2\geq0$ and $t_0>0$.

Because $P\rho_0(x,y)>0$ is a bounded function on $\mathbb{R}^2_+$,  by the property of Poisson kernel we have
\begin{align}\label{eq:proper1}
\lim_{y\to+\infty}Z_2(x,y,t_0)=+\infty,~~\lim_{y\to0+}Z_2(x,y,t_0)=-b \rho_0(x)<0.
\end{align}
Hence, for any fixed $Z_2\geq0$, there exists a point $y>0$ depending on $x$ such that
\[
Z_2=ay - bP\rho_0(x,y).
\]
Now we prove that  $y$ is unique. Suppose that there exist $y_1>y_2$ such that
\begin{align*}
Z_2=ay_1-b P\rho_0(x,y_1),\\
Z_2=ay_2-b P\rho_0(x,y_2),
\end{align*}
which implies
\[
y_1,y_2>Z_2/a~\textrm{ and }~\frac{P\rho_0(x,y_1)}{y_1-Z_2/a}=\frac{P\rho_0(x,y_2)}{y_2-Z_2/a}.
\]
Because function
\[
h(y)=\frac{y}{y-Z_2/a}\cdot \frac{1}{y^2+(x-s)^2}
\]
is a decreasing function for $y>Z_2/a$, we obtain a contradiction.

Now we denote by $y_{Z_2}(x)$ the solution of \eqref{eq:imaginarypart} with fixed $Z_2\geq0$, $t_0>0$ and $x\in\mathbb{R}$. Hence, we obtain
\begin{align}\label{eq:yx}
ay_{Z_2}(x)-Z_2=bP\rho_0(x,y_{Z_2}(x)).
\end{align}

\textbf{Step 2.} In this step, we prove there exits a unique $x$ satisfies \eqref{eq:realpart} for fixed $Z_1,Z_2$ and $t_0.$
Taking derivative of \eqref{eq:yx} with respect to $x$ gives
\begin{align}\label{eq:derivativeyx}
\frac{\di }{\di x}y_{Z_2}(x)= \frac{\partial_xP\rho_0(x,y_{Z_2}(x))}{a/b-\partial_yP\rho_0(x,y_{Z_2}(x))}.
\end{align}
Since $\rho_0\in H^s(\mathbb{R})\cap L^1(\mathbb{R})$ ($s>1/2$), it follows that $H\rho_0\in L^\infty(\mathbb{R})$ and therefore $R\rho_0=PH\rho_0$ is a bounded function over $\mathbb{R}^2_+$. Furthermore,
\begin{align}\label{eq:proper2}
\lim_{x\to\pm\infty}[ax+bR\rho_0(x,y_{Z_2}(x))]=\pm\infty.
\end{align}
Hence, for any $Z_1\in\mathbb{R}$, we can find a $x\in\mathbb{R}$ such that
\[
Z_1=ax+bR\rho_0(x,y_{Z_2}(x)).
\]
To prove the uniqueness, we only have to prove the following function
\[
q(x)=ax+b R\rho_0(x,y_{Z_2}(x)),
\]
is an increasing function. By using \eqref{eq:derivativeyx} and the Cauchy-Riemann equations
\begin{align}\label{eq:CauchyRiemann}
\partial_x R\rho_0=-\partial_yP\rho_0,\quad \partial_x P\rho_0=\partial_yR\rho_0,
\end{align}
and taking derivative of $q(x)$ gives
\[
\frac{\di}{\di x}q(x)=\frac{b(a/b+\partial_x R\rho_0)^2+(\partial_x P\rho_0)^2}{a/b+\partial_x R\rho_0}(x,y_{Z_2}(x)).
\]
To prove the increasing of $q(x)$, it is sufficient to prove
\begin{align}\label{eq:increasing}
a/b +\partial_x R\rho_0(x,y)>0
\end{align}
for any $(x,y)\in\mathbb{R}^2_+$ satisfying $ay-bP\rho_0(x,y)\geq0$ and $y>0$.
Suppose that
\[
a/b+\partial_x R\rho_0(x_0,y_0)\leq 0
\]
for some point $(x_0,y_0)\in\mathbb{R}^2_+$ with $ay_0-bP\rho_0(x_0,y_0)\geq0$. Then, we have
\begin{align*}
-a/b\geq \partial_x R\rho_0(x_0,y_0)&=\frac{1}{\pi}\int_{\mathbb{R}}\frac{y_0^2-(x_0-s)^2}{[y_0^2+(x_0-s)^2]^2}\rho_0(s)\di s>\frac{1}{\pi}\int_{\mathbb{R}}\frac{-y_0^2-(x_0-s)^2}{[y_0^2+(x_0-s)^2]^2}\rho_0(s)\di s\nonumber\\
&=\frac{1}{\pi}\int_{\mathbb{R}}\frac{-1}{y_0^2+(x_0-s)^2}\rho_0(s)\di s=-\frac{P\rho_0(x_0,y_0)}{y_0},
\end{align*}
which implies a contradiction:
\[
ay_0-bP\rho_0(x_0,y_0)<0.
\]
\end{proof}
From the above lemma, we know that the backward characteristics are well defined. More importantly, for any $Z\in \occ$ the initial point $w$ must be interior point in $\mathbb{C}_+$.  For any $t\geq0$, we denote the backward characteristics as:
\[
Z^{-1}(\cdot,t): \occ \to \mathbb{C}_+.
\]
From the uniqueness in Lemma \ref{lmm:bijection}, $Z^{-1}(\cdot,t)$ is an $1-1$ map.

\begin{proof}[Proof of Theorem \ref{thm:analytic}]
For simplicity, we only consider the case $\gamma=1$. The proof for arbitrary $\gamma>0$ is similar.

\textbf{Step 1.} Proof of (i).
From Lemma \ref{lmm:bijection}, we have $\occ\subset\{Z(w,t):~~w\in \mathbb{C}_+\}$ and $Z^{-1}(\cdot,t)$ is well defined on $\occ$ for any fixed time $t>0$. Denote the preimage of $Z(\cdot,t)$ as:
\[
Z^{-1}(\occ,t):=\{w\in \mathbb{C}_+;~~Z(w,t)\in \occ\}.
\]
Denote
\[
a(t):=e^{-  t},\quad b(t):={\pi}\sinh   t.
\]
For $(x,y)\in\mathbb{R}^2_+$ and $Z_2(x,y,t)\geq0$, by the Cauchy-Riemann equation \eqref{eq:CauchyRiemann},  we have
\begin{align}\label{eq:big0}
|Z_w(w,t)|=\left|\frac{\partial(Z_1,Z_2)}{\partial (x,y)}\right|(x,y)=&\left|
\begin{array}{cc}
\partial_xZ_1 & \partial_yZ_1 \nonumber\\
\partial_xZ_2 & \partial_yZ_2
\end{array}
\right|=\left|
\begin{array}{cc}
a(t)+b(t)\partial_xR\rho_0 & b(t)\partial_yR\rho_0 \nonumber\\
-b\partial_xP\rho_0 & a(t)-b(t)\partial_yP\rho_0
\end{array}
\right|\\
=&\Big[a(t)+b(t)\partial_xR\rho_0\Big]^2+\Big[b(t)\partial_xP\rho_0\Big]^2\Big|_{(x,y)}>0.
\end{align}
Due to \eqref{eq:proper1} and \eqref{eq:proper2}, we obtain
\[
|Z(w,t)|\to+\infty~\textrm{ as }~|w|\to +\infty.
\]
which means $Z(\cdot,t)$ is proper \cite[Definition 6.2.2]{krantz2012implicit}.
By the Hadamard's global inverse function theorem \cite[Theorem 6.2.8]{krantz2012implicit}, there exists a inverse function $Z^{-1}(\cdot,t)$ such that
\[
Z^{-1}(\cdot,t):~\occ \to Z^{-1}(\occ,t)
\]
is a bijection. We also know $Z^{-1}$ is $\occ$-holomorphic since $Z$ is $\mathbb{C}_+$-holomorphic. Moreover, for any  $z\in \occ$, there exists $w=Z^{-1}(z,t)\in \mathbb{C}_+$. Due to $z=Z(Z^{-1}(z,t),t) \in \occ$  and $|Z_w(w,t)|\neq 0$ (by \eqref{eq:big0}), we have
\[
\partial_tZ^{-1}(z,t)=-\frac{\partial_tZ(w,t)}{\partial_wZ(w,t)},\quad w=Z^{-1}(z,t).
\]
Because of \eqref{eq:gloabltrajec}, we know $\frac{\partial^k}{\partial t^k}Z(w,t)$ is $\mathbb{C}_+$-holomorphic for any positive integer $k$.
Hence, $\frac{\partial^k}{\partial t^k}Z^{-1}(z,t)$ is $\occ$-holomorphic for any positive integer $k$.
From \eqref{eq:gloabltrajec}, we have
\begin{align}\label{eq:z}
z=Z^{-1}(z,t) \cosh t +g_0(Z^{-1}(z,t))\sinh t,~~z\in \occ.
\end{align}
By \eqref{eq:complexBurgerscharac}, we obtain
\[
g(Z(w,t),t)=\frac{\di }{\di t}Z(w,t)=w\sinh t+g_0(w)\cosh t.
\]
Hence,
\begin{align}\label{eq:g}
g(z,t)=Z^{-1}(z,t)\sinh t+g_0(Z^{-1}(z,t))\cosh t,
\end{align}
which is a $\occ$-holomorphic solution to the complex Burgers equation \eqref{eq:complexBurgers2} satisfying $g(z,0)=g_0(z)$. Moreover, due to the time regularity for $Z^{-1}(z,t)$, we know that $\frac{\partial^k}{\partial t^k}g(z,t)$ is $\occ$-holomorphic for any positive integer $k$ and $t>0$.

\textbf{Step 2.} Proof of (ii).
A $\occ$-holomorphic solution to \eqref{eq:complexBurgers1}  is given by
\begin{align}\label{eq:f}
f(z,t):=g(z,t)+ z,~~z\in\occ,~~t>0,
\end{align}
with initial datum $f_0(z)=\pi R\rho_0(x,y)-i\pi P\rho_0(x,y)$, $z=x+iy\in\mathbb{C}_+$.  Combining \eqref{eq:z} and \eqref{eq:g}, we obtain
\begin{align}\label{eq:fz}
f(z,t)= f_0(Z^{-1}(z,t))e^{t}~\textrm{ and }~z=e^{- t}Z^{-1}(z,t)+f_0(Z^{-1}(z,t))\sinh  t,~~z\in\occ.
\end{align}
Consider the trace of $f(z,t)$ on the real line and define:
\[
f(x,t)=:u(x,t)-i\pi \rho(x,t).
\]
Due to Lemma \ref{lmm:bijection}, for any $x\in\mathbb{R}$, we have $Z^{-1}(x,t)=:a_x+ib_x\in\mathbb{C_+}$ with some positive real number $b_x>0$. From \eqref{eq:fz}, we have
\[
f(x,t)= f_0(a_x+ib_x)e^{t}=\pi R\rho_0(a_x,b_x)e^t-i\pi P\rho_0(a_x,b_x)e^t
\]
Therefore,
\begin{align}
\rho(x,t)=P\rho_0(a_x,b_x)e^t>0,~~x\in\mathbb{R}.
\end{align}
Hence, $\rho(x,t)$ is a positive analytical solution to the Dyson equation \eqref{eq:meanfield}. Moreover, by the uniqueness of solutions to the characteristics equation \eqref{eq:complexBurgers2} we know analytical solutions to the Dyson equation \eqref{eq:meanfield} is unique.

The energy estimate \eqref{eq:energyconservated} follows from
\begin{align*}
\frac{\di}{\di t}E(\rho)&=\int_{\mathbb{R}}\frac{\delta E}{\delta \rho}\cdot \partial_t\rho \di x=-\int_{\mathbb{R}}\rho\left|\partial_x\left(\frac{\delta E}{\delta \rho}\right)\right|^2 \di x\nonumber\\
&=-\int_{\mathbb{R}}\rho(x,t)\big|\gamma x-\pi H\rho(x,t)\big|^2 \di x.
\end{align*}
For \eqref{eq:secondmoment}, direct calculations show that $\|\rho( t)\|_{L^1(\mathbb{R})}=\|\rho_0\|_{L^1(\mathbb{R})}$. Multiplying \eqref{eq:meanfield} by $x^2$ and taking integral yield
\[
\frac{\di}{\di t}\int_{\mathbb{R}}x^2\rho(x,t)\di x=2\pi\int_{\mathbb{R}}x\rho H\rho \di x-2\int_{\mathbb{R}}x^2\rho(x,t)\di x=\|\rho_0\|_{L^1}^2-2\int_{\mathbb{R}}x^2\rho(x,t)\di x,
\]
which implies \eqref{eq:secondmoment}.
Inequality \eqref{eq:entropy} follows from Gr\"onwall's inequality gives and the following estimate
\begin{align*}
\frac{\di}{\di t} \int_\mathbb{R} \rho \log \rho \di x =& \int_{\mathbb{R}} \partial_t \rho(\log \rho+1) \di x = \int_{\mathbb{R}} -(\rho H \rho+ \gamma x \rho)_x(\log \rho+1) \di x \\
=&  \int_{\mathbb{R}}(H \rho - \gamma x) \rho_x \di x  = - \|(-\Delta)^{1/4}\rho\|_{L^2}^2+\gamma\|\rho_0\|_{L^1}.
\end{align*}

\textbf{Step 3.} We prove  (iii) following the idea of \cite{rogers1993interacting}. Recall formula \eqref{eq:fz}.
For fixed $z\in\occ$, denote 
\[
z_r(t)+iz_i(t):=e^{-t}Z^{-1}(z,t).
\]
Next, we prove that $z_r(t)+iz_i(t)$ converges to a point $w=z_r^*+iz_i^*\in \mathbb{C}_+$ as $t\to\infty$. To this end, we first prove
$|z_r(t)|$ and $z_i(t)$ are all bounded from above and below uniformly in time $t$.

Because
\[
f_0(Z^{-1}(z,t))=\pi R\rho_0(e^tz_r(t),e^tz_i(t))-i\pi P\rho_0(e^tz_r(t),e^tz_i(t)),
\]
by \eqref{eq:fz}, we have
\begin{align}\label{eq:z1}
z=z_r(t)+\pi R\rho_0(e^tz_r(t),e^tz_i(t))\sinh t+i\left[z_i(t)-\pi P\rho_0(e^tz_r(t),e^tz_i(t))\sinh t\right].
\end{align}
Due to $\pi P\rho_0(e^tz_r(t),e^tz_i(t))\sinh t\geq 0$, we have
\[
z_i(t)\geq\Im(z)>0.
\]
Moreover, we have
\begin{align*}
\Im(z)&=z_i(t)-\pi P\rho_0(e^tz_r(t),e^tz_i(t))\sinh t\\
&=z_i(t)-\int_{\mathbb{R}}\frac{e^tz_i(t)}{e^{2t}z_i^2(t)+(e^tz_r(t)-s)^2}\rho_0(s)\di s\sinh t\\
&\geq z_i(t)-\int_{\mathbb{R}}\frac{e^{2t}z_i(t)}{2e^{2t}z_i^2(t)+2(e^tz_r(t)-s)^2}\rho_0(s)\di s\\
&\geq z_i(t)-\frac{1}{z_i(t)},
\end{align*}
which implies
\[
z_i(t)\leq \Im(z)+1.
\]
Hence, $z_i(t)$ is bounded as
\[
0<\Im(z)\leq z_i(t)\leq \Im(z)+1
\]
Next, we prove
\[
\sup_{t\geq0}|z_r(t)|<+\infty.
\]
We prove this by a contradiction argument. If there exists $t_n\to\infty$ such that $z_r(t_n)\to\infty$, then by the dominated convergence theorem we have
\[
\pi R\rho_0(e^{t_n}z_r(t_n),e^{t_n}z_i(t_n))\sinh t_n=\int_{\mathbb{R}}\frac{e^{t_n}z_r(t_n)-s}{e^{2t_n}z_i^2(t_n)+(e^{t_n}z_r(t_n)-s)^2}\di x\sinh t_n\to0,~~n\to\infty.
\]
By \eqref{eq:z1}, we obtain a contradiction that
\[
\Re(z)=z_r(t_n)+\pi R\rho_0(e^{t_n}z_r(t_n),e^{t_n}z_i(t_n))\sinh t_n\to\infty.
\]

Since  $|z_r(t)|$ and $z_i(t)$ are bounded, there exist $t_n\to\infty$  and two constant $z_r^*$, $z_i^*>0$ such that
\[
z_r(t_n)\to z_r^*,~~z_i(t_n)\to z_i^*,~~n\to\infty.
\]
Let $w:=z_r^*+iz_i^*$. For any $s\in\mathbb{R}$, we have
\[
\frac{e^{t_n}z_r(t_n)-s}{e^{2t_n}z_i^2(t_n)+(e^{t_n}z_r(t_n)-s)^2}\sinh t_n\to\frac{z_r^*}{2(z_i^*)^2+2(z_r^*)^2},~~n\to\infty.
\]
Then, by the dominated convergence theorem we have
\begin{align*}
&\lim_{n\to\infty}\pi R\rho_0(e^tz_r(t_n),e^tz_i(t_n))\sinh t_n\\
=&\lim_{n\to\infty}\int_{\mathbb{R}}\frac{e^{t_n}z_r(t_n)-s}{e^{2t_n}z_i^2(t_n)+(e^{t_n}z_r(t_n)-s)^2}\di x\sinh t_n\\
=&\frac{z_r^*}{2(z_i^*)^2+2(z_r^*)^2}.
\end{align*}
Similarly, we have
\begin{align*}
\lim_{n\to\infty}\pi P\rho_0(e^tz_r(t_n),e^tz_i(t_n))\sinh t=\frac{z_i^*}{2(z_i^*)^2+2(z_r^*)^2}.
\end{align*}
Hence, from \eqref{eq:z1} we obtain
\[
z=w+\frac{1}{2}\frac{z_r^*-iz_i^*}{(z_i^*)^2+(z_r^*)^2}=w+\frac{1}{2w}.
\]
Similar to the calculation of \eqref{eq:imaginary}, we know that the above equation has a unique solution in $\mathbb{C_+}$:
\[
w=\frac{1}{z-\sqrt{z^2-2}}.
\]
Hence, we have
\[
e^{-t}Z^{-1}(z,t)=z_r(t)+iz_i(t)\to w=\frac{1}{z-\sqrt{z^2-2}},~~t\to\infty.
\]
By \eqref{eq:fz} and using the dominated convergence theorem again, we have
\begin{align*}
f(z,t)&=f_0(Z^{-1}(z,t))e^t\\
&=\int_{\mathbb{R}}\frac{e^{2t}z_r(t)-s}{e^{2t}z_i^2(t)+[e^tz_r(t)-s]^2}\rho_0(s)\di s-i\int_{\mathbb{R}}\frac{e^{2t}z_i(t)}{e^{2t}z_i^2(t)+[e^tz_r(t)-s]^2}\rho_0(s)\di s\\
&\to \frac{z_r^*-iz_i^*}{(z_i^*)^2+(z_r^*)^2}=\frac{1}{w}=z-\sqrt{z^2-2}.
\end{align*}
The trace of $z-\sqrt{z^2-2}$ on the real line is
\begin{gather*}
f_\infty(x):=\pi H\rho_\infty(x)-i\pi\rho_\infty(x)=\left\{
\begin{split}
&x+\sqrt{x^2-2},~~x<-\sqrt{2},\\
&x-i\sqrt{2-x^2},~~x\in[-\sqrt{2},\sqrt{2}],\\
&x-\sqrt{x^2-2} ,~~x>\sqrt{2},
\end{split}
\right.
\end{gather*}
Hence,
\[
\rho(x,t)\to \rho_\infty(x)=\frac{\sqrt{(2-x^2)_+}}{\pi},
\]
which proves part (iii) in Theorem \ref{thm:analytic}.

\textbf{Step 4.} We prove  (iv). From \eqref{eq:transform}, if $g(z,t)$ and $\rho(x,t)$ are analytical solutions to \eqref{eq:complexBurgers3} and \eqref{eq:meanfield} with $\gamma=0$, then
\[
\tilde{g}(z,t):=e^{t} g\left(e^{t}z,\frac{e^{2t-1}}{2}\right)- z,~~z\in\occ,~~t>0,
\]
is a $\occ$-holomorphic solution to \eqref{eq:complexBurgers2},
and
\[
\tilde{\rho}(x,t):=e^{t} \rho\left(e^{t} x, \frac{e^{2 t}-1}{2}\right),~~x\in\mathbb{R},~~t>0,
\]
gives an analytical solution to \eqref{eq:meanfield} for $\gamma=1$. By part (iii), we obtain part (iv).

\end{proof}

\section{An explicit solution to \eqref{eq:meanfield} with $\gamma=0$}\label{app_A}
In this section, by the Stieltjes transform of Wigner's semicircle law $\mu_1$ in \eqref{eq:semicirclelaw},  we recover an explicit solution, which is same as the explicit solution of the Dyson equation \eqref{eq:meanfield} constructed in \eqref{eq:explicitsolution} and \eqref{app7}  (see  \eqref{eq:Lowurho}). 

First, we begin by taking Stieltjes transformation.
 Let $f_1(z)$ be the Stieltjes transform of the Wigner's semicircle law $\mu_1$ given by \eqref{eq:semicirclelaw}:
\[
f_1(z)=\int_{-2}^2 \frac{1}{z-y} \mu_1(\di y),~~z\in\mathbb{C} \setminus[-2,2].
\]
Let $y=2\cos\theta$ for $\theta\in[-\pi,0]$, $\alpha=-\theta$ and we have
\begin{align*}
f_1(z)=\frac{1}{2\pi}\int_{-2}^2\frac{\sqrt{4-y^2}}{z-y}\di y=&\frac{1}{\pi}\int^0_{-\pi} \frac{2\sin^2\theta}{z-2\cos\theta}\di \theta=\frac{1}{\pi}\int^\pi_{0} \frac{2\sin^2\alpha}{z-2\cos\alpha}\di \alpha\\
=&\frac{1}{\pi}\int^{\pi}_{-\pi} \frac{\sin^2\theta}{z-2\cos\theta}\di \theta.
\end{align*}
Let $\zeta=e^{i\theta}$ and we  obtain
\[
f_1(z)=\frac{1}{4\pi i}\oint_{|\zeta|=1}\frac{(\zeta^2-1)^2}{\zeta^2(\zeta^2+1-z\zeta)}\di\zeta.
\]
Set
\[
h(\zeta):=\frac{(\zeta^2-1)^2}{\zeta^2(\zeta^2+1-z\zeta)}.
\]
Function $h(\zeta)$ has three poles: $\zeta_0=0$, $\zeta_1=\frac{z+\sqrt{z^2-4}}{2}$, and $\zeta_2=\frac{z-\sqrt{z^2-4}}{2}$.
Next, we choose the branch cut of $\sqrt{z^2-4}$. Due to
\[
\sqrt{z^2-4}=|z^2-4|^{1/2}e^{\frac{i}{2}[\arg(z-2)+\arg(z+2)]},
\]
we see that $-2$ and $2$ are branch points. We take the branch cut along the interval $[-2,2]$ and we set $\arg(z-2)=\pi$ and $\arg(z+2)=0$ for $z$ on the upside of the branch cut. In this case, on the upside of $[-2,2]$  we have $\sqrt{z^2-4}=i\sqrt{4-x^2}$ while on the downside of $[-2,2]$, $\sqrt{z^2-4}=-i\sqrt{4-x^2}$ . Moreover, the square root of $z^2-4$ has a positive imaginary part when $z\in \mathbb{C}_+$ and it has a negative imaginary part when $z\in \mathbb{C}_-:=\{z:\Im(z)<0\}$. Hence, for the imaginary part, we have
\begin{align}\label{eq:zeta12}
|\Im(z-\sqrt{z^2-4})|<|\Im(z+\sqrt{z^2-4})|~\textrm{ for }~z\in\mathbb{C}\setminus[-2,2].
\end{align}
which implies
\[
|\Im(\zeta_2)|<|\Im(\zeta_1)|~\textrm{ for }~z\in\mathbb{C}\setminus[-2,2].
\]
Due to $\zeta_1\zeta_2=1$, we have
\[
|\zeta_2|<1 \text{ and }|\zeta_1|>1~\textrm{ for }~z\in\mathbb{C}\setminus[-2,2].
\]
And we obtain for $z\in\mathbb{C}\setminus[-2,2],$
\[
\Res h(\zeta_0)=\lim_{\zeta\to \zeta_0}\frac{\di }{\di \zeta}[(\zeta-\zeta_0)^2h(\zeta)]=z, \quad 
\Res h(\zeta_2)=-\sqrt{z^2-4}
\]
Hence, by the Residue theorem,
\begin{align}\label{eq:Deffstar}
f_1(z)=\frac{z-\sqrt{z^2-4}}{2},\qquad \mathbb{C}\setminus[-2,2].
\end{align}

Second, we show $f_1(-z)$ is a Herglotz  (Pick) function, which is analytical on $\mathbb{C}\setminus [-2, 2]$ and $\Im(z)\Im (f_1(-z)) > 0$ for $\Im(z)\neq0$ and we  show the decay order of $\Re(f_1)$ and  $\Im(f_1)$  as $\Re(z)$ and $\Im(z)$ tends to infinity. 
Direct calculations give
\begin{align}\label{eq:relations}
2\Re(\sqrt{z})\Im(\sqrt{z})=\Im(z),~~\Re(\sqrt{z})^2=\frac{|z|+\Re(z)}{2},~\textrm{ and }~ \Im(\sqrt{z})^2=\frac{|z|-\Re(z)}{2}.
\end{align}
For $z=x+iy$,  we obtain
\[
\Im(z^2-4)=2xy,\quad \Re(\sqrt{z^2-4})^2=\frac{\sqrt{(x^2-y^2-4)^2+4x^2y^2}+x^2-y^2-4}{2},
\]
and
\[
\Im(\sqrt{z^2-4})^2=\frac{\sqrt{(x^2-y^2-4)^2+4x^2y^2}-(x^2-y^2-4)}{2}.
\]
Recall that in our settings of branch cut, the square root of $z^2-4$ has positive imaginary part when $z\in \mathbb{C}_+$ and it has negative imaginary part when $z\in \mathbb{C}_-$. Due to  \eqref{eq:relations}, we know that the sign of $\Re(\sqrt{z^2-4})$ is the same as $\Im(\sqrt{z^2-4})$ when $xy>0$ and they have different signs if $xy<0$.
By elementary calculations, we have
\begin{gather}\label{tmb4}
\Re(f_1(z))=\left\{
\begin{split}
\frac{\sqrt{2}x-\sqrt{\sqrt{(x^2-y^2-4)^2+4x^2y^2}+(x^2-y^2-4)}}{2\sqrt{2}},~~x>0,\\
\frac{\sqrt{2}x+\sqrt{\sqrt{(x^2-y^2-4)^2+4x^2y^2}+(x^2-y^2-4)}}{2\sqrt{2}},~~x<0,
\end{split}
\right.
\end{gather}
and
\begin{gather}\label{eq:imaginary}
\Im(f_1(z))=\left\{
\begin{split}
\frac{\sqrt{2}y-\sqrt{\sqrt{(x^2-y^2-4)^2+4x^2y^2}-(x^2-y^2-4)}}{2\sqrt{2}}<0,~~y>0,\\
\frac{\sqrt{2}y+\sqrt{\sqrt{(x^2-y^2-4)^2+4x^2y^2}-(x^2-y^2-4)}}{2\sqrt{2}}>0,~~y<0.
\end{split}
\right.
\end{gather}
From the sign in \eqref{eq:imaginary},  $\Im (z)>0$ implies $\Im(-z)<0$ and  $\Im (f(-z))>0$. Therefore we have $\Im(z)\cdot \Im(f_1(-z))\ge0$ and thus $f_1(-z)$ is a Herglotz function.
Moreover, for fixed $y\in\mathbb{R}$ in \eqref{tmb4}, dividing $\Re(f_1(z))$  by $x$   shows that $\Re(f_1(z))$   decays in the order $O(|x|^{-1})$ as $|x|\to\infty$. Similarly, $\Im(f_1(z))$ decays in the order $O(|y|^{-1})$ as $|y|\to\infty$ for fixed $x\in\mathbb{R}$.

Third, we use $f_1$ to recover the explicit solution to the Dyson equation \eqref{eq:meanfield} with $\gamma=0$ given by \eqref{eq:explicitsolution} and \eqref{app7}. Define
\[
f_t(z)=\frac{1}{\sqrt{t}}f_1\left(\frac{z}{\sqrt{t}}\right),~~z\in \mathbb{C}\setminus [-2\sqrt{t},2\sqrt{t}].
\]
Then, direct checking shows that $f_t(z)$ is a self-similar solution to complex Burgers equation \eqref{eq:complexBurgers}.

Finally, we try to obtain the traces of $f_1$ on the upper and lower half planes respectively. In the above settings of branch cut,  we have
\[
\arg(z-2)=\pi=\arg(z+2),~~z\in(-\infty,-2),
\]
which implies
\[
\sqrt{z^2-4}=\sqrt{x^2-4}e^{i\pi}=-\sqrt{x^2-4},~~z=x\in(-\infty,-2).
\]
Similarly, we have
\[
\sqrt{z^2-4}=\sqrt{x^2-4}e^{i2\pi}=\sqrt{x^2-4},~~z=x\in(2,+\infty).
\]
Hence, the trace of function $f_1(z)$ defined by \eqref{eq:Deffstar} from the upper half plane $\mathbb{C}_+$ is  given by
\begin{gather}\label{eq:Uperlimits}
f_1(x+)=\left\{
\begin{split}
&\frac{x+\sqrt{x^2-4}}{2},~~x<-2,\\
&\frac{x-i\sqrt{4-x^2}}{2},~~x\in [-2,2],\\
&\frac{x-\sqrt{x^2-4}}{2},~~x>2.
\end{split}
\right.
\end{gather}
The trace of function $f_1(z)$ given by \eqref{eq:Deffstar} from the lower half plane $\mathbb{C}_-$ is
\begin{gather}\label{eq:Lowlimits}
f_1(x-)=\left\{
\begin{split}
&\frac{x+\sqrt{x^2-4}}{2},~~x<-2,\\
&\frac{x+i\sqrt{4-x^2}}{2},~~x\in [-2,2],\\
&\frac{x-\sqrt{x^2-4}}{2},~~x>2.
\end{split}
\right.
\end{gather}
Direct computations show that $\frac{1}{\sqrt{t}}f_1(\frac{x}{\sqrt{t}}\pm)$ are solutions to complex Burgers equation on the real line $\mathbb{R}$.

Recall Section \ref{sec:Dyson}. If $\rho$ is a solution to the Dyson equation  \eqref{eq:meanfield}, then $g=\pi H\rho-i\pi \rho-x$ is a solution to the complex Burgers equation \eqref{eq:complexBurgers} on the real line and $f=\pi H\rho-i\pi \rho$ gives a trace of an analytical function on the upper half plane.
Hence, we use the trace $f_1(x+)$ (given by \eqref{eq:Uperlimits}) to define
\begin{gather*}
f(x,t)=\frac{1}{\sqrt{t}}f_1\left(\frac{x}{\sqrt{t}}+\right)=\left\{
\begin{split}
&\frac{x}{2t}+\frac{\sqrt{x^2-4t}}{2t},~~x<-2\sqrt{t},\\
&\frac{x}{2t}-i\frac{\sqrt{4t-x^2}}{2t},~~x\in[-2\sqrt{t},2\sqrt{t}],\\
&\frac{x}{2t}-\frac{\sqrt{x^2-4t}}{2t},~~x>2\sqrt{t},
\end{split}
\right.
\end{gather*}
and
\begin{gather}\label{eq:Lowurho}
u(x,t)=\left\{
\begin{split}
&\frac{x+\sqrt{x^2-4t}}{2t},~~x<-2\sqrt{t},\\
&\frac{x}{2t},~~x\in[-2\sqrt{t},2\sqrt{t}],\\
&\frac{x-\sqrt{x^2-4t}}{2t},~~x>2\sqrt{t},
\end{split}
\right.\quad
\rho(x,t)=\frac{\sqrt{(4t-x^2)_+}}{2\pi t}.
\end{gather}

To the end of this section, we provide another method to prove $f_1(-z)$ is a Herglotz (Pick) analytic on $(-\infty,-2)\cup (2, +\infty)$. Recall $\mu_1(\di y) = \frac{1}{2\pi }\sqrt{(4-y^2)_+} \di y$. Then changing of variable $y=t-2$ gives that
\begin{equation}
f_1(-z) = \int_0^4 \frac{1}{-z+2 -t} \frac{1}{2\pi} \sqrt{(t(4-t))_+} \di t.
\end{equation}
Define the measure $\di \mu_{*}(t):= \frac{1}{2\pi} \sqrt{(t(4-t))_+} \di t$ and recast $f_1(-z)$ as
\begin{equation}
f_1(-z)= \int_0^4 \frac{1}{-z+2 - t} \di \mu_*(t).
\end{equation} 
Changing variable $-z+2 = \frac{1}{w}$ gives
\begin{equation}
f_1(-z)= \int_0^4 \frac{1}{\frac{1}{w} - t} \di \mu_*(t)= \int_0^4 \frac{w}{1-wt} \di \mu_*(t)=: wF(w).
\end{equation}
Here $F(w)= \int_0^4 \frac{1}{1-wt} \di \mu_*(t) $ is a Pick function analytic on $(-\infty, \frac14)$. $F(w)$ is also the generating function of a completely monotone sequence
$\{A_n(2,2)\}_{n\geq 0}$ \cite[Lemma 3]{liupego}, where $A_n(2,2)$ is the general Fuss-Catalan numbers (also called Raney numbers) with index $(2,2)$. Therefore from \cite[Corollary 1 (iii)]{liupego}, $F_1(w):=wF(w)$ is a Pick function analytic on $w\in(-\infty, \frac14)$. From the relation $-z+2 = \frac{1}{w}$, we know $w(z)=\frac{1}{2-w}$ is a pick function mapping $(-\infty,-2)\cup (2, +\infty)$ to $(-\infty, \frac14)$.    Therefore the composition $f_1(-z)=F_1\circ w(z)$ is a Pick function analytic on $z\in(-\infty,-2)\cup (2, +\infty)$.

\end{document}